\newcommand{\hl}[1]{#1}
\newcommand{\highlighting}[1]{#1}
\newcommand{\subscript}[2]{#1#2}
\newcommand{\Prob}{\ensuremath{\mathbb{P}}}
\newcommand{\Ex}{\ensuremath{\mathbb{E}}}
\newcommand{\argmin}{\mathop{\rm arg\,min}}
\def\maxim{\mathop{\textup{maximize}}}
\def\minim{\mathop{\textup{minimize}}}
\def\st{\mathop{\textup{subject to}}}
\providecommand{\natexlab}[1]{#1}
\theoremstyle{plain}
\newtheorem{theorem}{Theorem}
\newtheorem{lemma}{Lemma}
\newtheorem{proposition}{Proposition}
\newtheorem{corollary}{Corollary}
\theoremstyle{definition}
\newtheorem{definition}{Definition}
\newtheorem{assumption}{Assumption}
\newtheorem{remark}{Remark}
\title{Multi-gear bandits, partial conservation laws, and indexability\thanks{This paper was presented by the author at the 32nd European Conference on Operational Research (EURO 2022), Espoo, Finland, 3--6 July 2022. An earlier preliminary draft was presented by the author at the 3rd International Conference on Performance Evaluation Methodologies and Tools (ValueTools 2008), Athens, Greece, 20--24 October 2008, where an extended abstract was published in its {proceedings}.}}
\author{Jos\'e Ni\~no-Mora\\
Department of Statistics, Universidad Carlos III de Madrid\\
28903 Getafe (Madrid), Spain\\
\texttt{jose.nino@uc3m.es}}
\date{} 
\begin{document}

\maketitle

\begin{abstract}
This paper considers what we propose to call \emph{\highlighting{multi-gear bandits}}, which are Markov decision processes modeling a generic dynamic and stochastic \emph{project} fueled by a single resource and which admit multiple actions representing gears of operation naturally ordered by their increasing resource consumption. 
The optimal operation of a multi-gear bandit aims to strike a balance between project performance costs or rewards and resource usage costs, which depend on the resource price. 
A computationally convenient and intuitive optimal solution is available when 
such a model is \emph{indexable}, meaning that its optimal policies are characterized by a \emph{dynamic allocation index} (DAI), a function of state--action pairs representing critical resource prices.
Motivated by the lack of general indexability conditions and efficient index-computing schemes, and focusing on the infinite-horizon finite-state and -action discounted case, we present a verification theorem ensuring that, if a model satisfies two proposed \emph{PCL-indexability conditions} with respect to a postulated family of structured policies, then it is indexable and such policies are optimal, with its DAI being given by a \emph{marginal productivity index} computed by a downshift adaptive-greedy algorithm in $A N$ steps, with $A+1$ actions and $N$ states. The DAI is further used as the basis of a new index policy for the \emph{multi-armed multi-gear bandit problem}.
\end{abstract}

\noindent\textbf{Keywords:} Markov decision process; multi-gear bandits; index policies; indexability; index algorithm

\noindent\textbf{MSC (2020):} 90C40; 90C39; 90B36

\medskip
\noindent\textbf{Note:} Published in \emph{Mathematics} \textbf{10}, 2497 (2022). DOI:\ \url{https://doi.org/10.3390/math10142497}

\section{Introduction}
\label{s:intro}
There is a 
substantial literature
analyzing the \emph{indexability} of infinite-horizon discrete-time binary-action Markov decision processes (MDP), i.e., their optimal solution by \emph{index policies}, starting with the seminal work of Bellman in~\cite{bellman56}
on a Bernoulli bandit model.
Such MDPs can be interpreted as models of a generic dynamic and stochastic \emph{project}, which can be operated in a passive or an active mode. We propose to refer to such operating modes as \emph{gears}, to reflect their natural ordering by increasing activity level.

The model in~\cite{bellman56} had the property 
that, while the project is passive, its state  does not change, which corresponds to a  \emph{classic bandit} setting. 
It was later shown that, given a finite collection of classic bandits, one of which must be active at 
each time, the policy that maximizes the expected total discounted reward in such a \emph{multi-armed bandit problem} has a remarkably simple structure which overcomes the \emph{curse of dimensionality} of a standard \emph{dynamic programming} approach: it suffices to evaluate what Gittins and Jones~\cite{gijo74} called the \emph{dynamic allocation index} (DAI), later known as the \emph{Gittins index}, of each project, which is a function of its state, 
and then activate at each time a project with largest index. See, e.g., the seminal work  of Gittins and Jones~\cite{gijo74} and Gittins~\cite{gi79}, the monograph by Gittins~\cite{gi89}, and alternative proofs by Whittle~\cite{whittle80}, Weber~\cite{weber92}, and Bertsimas and Ni\~no-Mora~\cite{bnm96}.

The assumption in classic bandit models that passive projects do not change state was removed by Whittle in~\cite{whit88b}, introducing \emph{restless bandits}. 
That paper further introduced an index for restless bandits (the \emph{Whittle index}), which characterizes the optimal operation of a single restless project and provides a suboptimal heuristic policy for scheduling multiple such projects, in the so-called \emph{multi-armed restless bandit problem}.  The latter has huge modeling power but is computationally intractable, and Whittle's index policy has proven effective in an ever-increasing variety of models for multifarious applications. Thus, e.g., to name a few, scheduling multi-class make-to-stock queues~\cite{vewe96}, scheduling multi-class queues with finite buffers~\cite{nmqs06}, admission control and routing to parallel queues with reneging~\cite{nmnetcoop07}, obsolescence mitigation strategies~\cite{kuSa10}, sensor scheduling and dynamic channel selection~\cite{washbSchn08, liuZhao10, borkaretal18, yangLuo20}, group maintenance~\cite{abbuMakis19}, multi-target tracking with Kalman filter dynamics~\cite{lascala06, danceSi19}, scheduling multi-armed bandits with switching costs~\cite{nmijoc08} or switching delays~\cite{nmmath21}, the dynamic prioritization of medical treatments or interventions~\cite{ayeretal19, mateetal19}, and resource allocation with varying requests 
and with resources shared by multiple requests~\cite{fuetal22}. 

Yet, while the Gittins index is well defined for any classic bandit, the Whittle index 
only exists for some restless bandits, called \emph{indexable}. Whittle pointed out in~\cite{whit88b} the need of finding sufficient conditions for \emph{indexability} (the existence of \mbox{the index).}

While researchers have deployed a wide variety of ingenious ad hoc techniques for proving the indexability of particular restless bandit models and computing their Whittle 
indices, 
the author has developed over the last two decades in a series of papers a systematic approach to accomplish such goals.
Thus, Ref. \cite{nmaap01} introduced a framework for establishing \emph{both} the indexability of a general finite-state restless bandit \emph{and} the optimality of a postulated family of structured policies, based on the concept of \emph{partial conservation laws} (PCLs), also introduced there.
If project performance metrics satisfy so-called \emph{PCL-indexability conditions}, 
then the project's Whittle index can be efficiently computed in $N$ steps, where $N$ is the number of states, by an \emph{adaptive-greedy index algorithm}. 
This algorithm is an extension of the classic index-computing algorithm of Klimov~\cite{kl74} for computing the indices that characterize the optimal policy for scheduling a multi-class queue with feedback. Note that Klimov's algorithm was adapted in~\cite{bnm96} for computing the Gittins index, based on a framework of generalized conservation laws.

PCLs extend classical conservation laws in stochastic scheduling. See, e.g., the conservation laws 
in Coffman and Mitrani~\cite{coffMitrani80}, 
the \emph{strong conservation laws} in Shanthikumar and Yao~\cite{shanthiYao92}, and the \emph{generalized conservation laws} in the work of Bertsimas and Ni\~no-Mora~\cite{bnm96}.

The author further developed the PCL framework for analyzing the indexability of finite-state restless bandits in~\cite{nmmp02}, which introduced projects fueled by 
a generic resource with a general resource consumption function. 
The framework was extended to the countably infinite state space case in~\cite{nmmor06} and to the \emph{bias optimality criterion} in~\cite{nmqs06}. Such early work is surveyed in the discussion paper
~\cite{nmtop07}. The framework was then extended to projects with a continuous real state in~\cite{nmmor20}, motivated by sensor scheduling applications. As for the adaptive-greedy algorithm for the Whittle index, an efficient computational implementation was presented in~\cite{nmmath20}.

The extension of the concept of indexability  from two-gear bandits to multi-gear bandits was first outlined  by Weber~\cite{weber07} in the setting of an illustrative example given by a three-action queueing admission control model.
Ref.~\cite{weber07} further outlined an index-computing algorithm for such a three-action model extending the aforementioned adaptive-greedy algorithm for the Whittle index. Such an insightful outline, was, however, not theoretically supported. 

The author formalized and extended Weber's~\cite{weber07} outline to introduce in~\cite{nmvaluet08a} a general multi-armed multi-mode bandit problem with finite-state projects, motivated by a model of optimal dynamic power allocation to multiple
users sharing a wireless downlink communication channel subject to a peak energy constraint, proposing an index policy when individual projects are indexable. Ref.~\cite{nmvaluet08a} further outlined an extension of the PCL framework along with an index algorithm, yet without proofs or analyses.  

See also the recent work of Zayas-Cab\'an et al.~\cite{zayascetal19} on a finite-horizon multi-armed multi-gear bandit problem and that of Killian et al.~\cite{killianetal21} on multi-action bandits, yet without a focus on indexability.

A related strand of research is the work on the optimality of structured policies in MDP models, mostly focusing on the monotonicity of optimal actions on the state. See, e.g., Serfozo~\cite{serfozo76} and the book chapter by Heyman and Sobel (\cite{heymSob84ii} Ch.\ 8). 
In some models with a one-dimensional state, most notably those arising in queueing theory, researchers have established the optimality of policies given by multiple thresholds. Thus, e.g., 
Crabill~\cite{crabill72} shows that such policies are optimal for selecting the speed of a single server catering to a queue. Such a model is an example of what we call here a \emph{multi-gear bandit}. 
Similar results are obtained, e.g.,  in Sabeti~\cite{sabeti73}, Ata and Shneorson~\cite{ataShneorson06}, and Mayorga et al.~\cite{mayorga06}.
The methods proposed herein also serve to establish the optimality of postulated families of structured policies, different from  prevailing approaches, typically based on \emph{submodularity}.

Another related line of work is the computational complexity of solving discounted finite-state and -action MDPs with general-purpose algorithms, most notably the classical methods of \emph{value iteration}, \emph{policy iteration}, and \emph{linear optimization}. 
Such methods are not \emph{strongly polynomial} in that the number of iterations required to compute an optimal policy depends not only on the number of states and actions but also on other factors, most notably the discount factor. 
Thus, e.g., Ye~\cite{ye11}  showed that the number of iterations required by policy iteration is 
bounded by $O((N^2 A/(1-\beta)) \log(N/(1-\beta))$, which shows that policy iteration is strongly polynomial but only if the discount factor is fixed and hence not part of the input. Such a bound has been improved by Scherrer~\cite{scherrer16} to 
$O((N A/(1~-~\beta)) \log(1/(1-\beta)))$.
Otherwise, policy iteration is known to have exponential worst-case complexity. See Hollanders et al.~\cite{hollandersetal16} and references therein. 
In contrast, as we will see, the algorithm presented herein solves a \emph{multi-gear bandit} model with $N$ states and $A+1$ actions in precisely $A N$ steps and is hence a strongly polynomial algorithm.

In contrast with the aforementioned outlines in the earlier work~\cite{weber07,nmvaluet08a}, this paper presents a theoretically supported extension of  the PCL-based sufficient indexability conditions from two-gear bandits to multi-gear bandits, along with 
an intuitive and efficient algorithm for computing the model's index.

The main contribution is a \emph{verification theorem} (Theorem \ref{the:verthe}) which ensures that, if the performance metrics of a multi-gear bandit model under a postulated family of structured  policies satisfy two \emph{PCL-indexability} conditions, then \emph{both} the model is indexable \emph{and} such policies are optimal, with the model's index being computed by a \emph{downshift adaptive-greedy algorithm} in $A N$ steps as pointed out above.

The remainder of the paper is organized as follows. 
Section \ref{s:pfmr} describes the multi-gear bandit model and formulates the main result, the verification theorem for indexability.
\mbox{Sections \ref{s:lpreflpp}--\ref{s:pcl}} lay out the groundwork needed to prove the verification theorem.
Thus, \mbox{Section \ref{s:lpreflpp}} discusses the linear optimization reformulation of the relevant MDP model.
\mbox{Section \ref{s:dperfmetr}} presents the required
relations between project performance metrics.
\mbox{Section \ref{s:aoadsf}}  analyzes the output of the proposed index-computing algorithm.
\mbox{Section \ref{s:pcl}} presents the framework of partial conservation laws in the present setting.
Then, \mbox{Section \ref{s:proofotvthe}} draws on the above to present our proof of the verification theorem.
\mbox{Section \ref{s:bpMAMGBP}} applies the indexability property to provide a performance bound and a novel index policy for the multi-armed multi-gear bandit problem.
\mbox{Section \ref{s:sext}} 
outlines some extensions, in particular to the long-run average cost criterion \mbox{(Section \ref{s:elracc})}, to models with uncontrollable states \mbox{(Section \ref{s:mwuncst})} and to models with a countably infinite state space (Section \ref{s:mcisspa}). 
Finally, Section \ref{s:disc} concludes with a discussion of the results.


\section{Preliminaries and Formulation of the Main Result}
\label{s:pfmr}
\subsection{Multi-Gear Bandits}
\label{s:malpp}
We next describe a general MDP model for the optimal operation of a multi-gear dynamic and stochastic project, which we call the \emph{multi-gear bandit problem}. Consider a general discrete-time infinite-horizon discounted MDP model of a controlled dynamic and stochastic \emph{project} that consumes a single resource.
At the start of each time period $t = 0, 1, \ldots$, the controller observes the current project \emph{state} $s(t)$, which moves through the finite state space $\mathcal{N} \triangleq \{1, \ldots, N\}$, and then selects an \emph{action} $a(t)$ from the finite action space $\mathcal{A} = \{0, 1, \ldots, A\}$. The choice of action at each time $t$ is
 based on a possibly randomized function of the system's \emph{history} $\mathcal{H}(t) \triangleq 
 \{s(t)\} \cup \{(s(t'), a(t'))\colon t' = 0, \ldots, t-1\}$, consisting of the current state  $s(t)$ and previous states visited and actions taken, if any.
This corresponds to adopting a \emph{control policy} (\emph{policy} for short) $\pi$ from the class $\Pi$ of \emph{history-dependent randomized policies} (see (\cite{put94} Sec.\ 2.1.5)). 
We will call such policies \emph{admissible}.
 
We will refer to action $0$ as the \emph{passive} action, as it models the project's evolution in the absence of control, and to $1, \ldots, A$ as the \emph{active} actions. Such actions model distinct  \emph{gears} for operating the project which are naturally ordered by their increasing resource consumption.  
Henceforth, we will use the terms \emph{action} and \emph{gear} interchangeably.

When the project occupies state $s(t) = i$ at the start of a period and action $a(t) = a$ is selected, it incurs a holding cost $h_i^a$ and consumes a quantity $q_i^a$ of the resource in the period, time-discounted with factor $0 < \beta < 1$.
Then, the project state moves in a Markovian fashion from 
$s(t) = i$ to $s(t+1) = j$ with probability $p_{ij}^a$. 

Consistently with the interpretation of actions $a$ as operating gears ordered by increasing activity levels, we shall assume that higher gears entail larger resource consumptions, so the resource consumption $q_i^a$ is monotone increasing in the gear $a$ for each state $i$:
\begin{equation}
\label{eq:resconsorder}
0 \leqslant q_i^0 < q_i^1 < \cdots < q_i^A, \quad i \in \mathcal{N}.
\end{equation}

Intuitively, to compensate for their larger resource consumptions, higher gears should be more beneficial in some sense than lower gears, e.g., they might tend to drive the project towards less costly states or yield lower holding costs.

We further introduce a scalar parameter $\lambda \in \mathbb{R}$ modeling the resource unit price. Note that $\lambda$ could take negative values, in which case it would represent a subsidy for using the resource.
We shall consider the project's \emph{$\lambda$-price problem}, which is to find an admissible project operating policy $\pi^*(\lambda)$ minimizing the expected total discounted holding and resource usage cost for \emph{any} initial state.
Writing as $\Ex_i^\pi[\cdot]$ the expectation under policy $\pi$ starting from state $i$, we denote by 
\begin{equation}
\label{eq:vipil}
V_i(\lambda, \pi) \triangleq \Ex_i^\pi\bigg[\sum_{t=0}^\infty \big(h_{s(t)}^{a(t)} + \lambda q_{s(t)}^{a(t)}\big) \beta^t\bigg]
\end{equation}
the corresponding expected total discounted cost incurred by the project when resource usage is charged at price $\lambda$. 
The resulting \emph{optimal (project) cost function} is 
\[
V_i^*(\lambda) \triangleq \inf \, \{V_i(\lambda, \pi)\colon \pi \in \Pi\}, \quad i \in \mathcal{N}.
\]

We can thus formulate the project's $\lambda$-price problem as
\begin{equation}
\label{eq:lambdapp}
(P_\lambda) \quad \textup{find } \pi^*(\lambda) \in \Pi\colon V_i(\lambda, \pi^*(\lambda)) = 
V_i^*(\lambda), \quad i \in \mathcal{N}.
\end{equation}
We shall refer to a policy $\pi^*(\lambda)$ solving the $\lambda$-price problem $(P_\lambda)$ as a \emph{$\lambda$-optimal policy}.

Denoting by $\Pi^{\textup{SD}}$ the class of \emph{stationary deterministic policies} (see \cite{put94} (Sec.\ 2.1.5)), which base action choice on the current state only, standard results in MDP theory (see~\cite{put94} (Theorem 6.2.10.a)) ensure the existence of a $\lambda$-optimal policy $\pi^*(\lambda) \in \Pi^{\textup{SD}}$. 
Both the optimal cost function $V_i^*(\lambda)$ and the optimal stationary deterministic policies for the $\lambda$-price problem $(P_\lambda)$ are determined by Bellman's 
\emph{discounted-cost optimality equations}
\begin{equation}
\label{eq:bellmaneqns}
V_i^*(\lambda) = \min_{a \in \mathcal{A}} \, \bigg(h_i^a + \lambda q_i^a +  \beta \sum_{j \in \mathcal{N}} p_{ij}^a V_j^*(\lambda)\bigg), \enspace 
i \in \mathcal{N}.
\end{equation}

It is well known that the optimal cost function $V_i^*(\lambda)$ is the unique solution to such equations and that a  stationary deterministic policy is optimal iff (i.e., if and only if) it selects an action attaining the minimum in the right-hand side of (\ref{eq:bellmaneqns}) for each state $i$. We shall also call such actions $\lambda$-optimal.
For fixed $\lambda$, the Bellman equations can be solved numerically by classical methods, such as \emph{value iteration}, \emph{policy iteration}, and \emph{linear optimization}. See, e.g.,   (\cite{put94} Sec.\ 6).

\subsection{Indexability}
\label{s:indexab}
Instead of solving the $\lambda$-price problem for specific values of the parameter $\lambda$, 
we shall pursue an alternative approach aiming at a complete understanding of optimal policies over the entire parameter space, by fully characterizing the optimal actions for the parametric collection $\mathcal{P} \triangleq \{P_\lambda\colon \lambda \in \mathbb{R}\}$ of \emph{all} $\lambda$-price problems. Such a characterization will be given in terms of \emph{critical parameter values}  $\lambda^{*, a}_{i}$, as defined next.

Note that, below and throughout the paper, we use the standard abbreviation \textit{iff} for \textit{if and only if}.

\begin{definition}[Indexability and DAI]
\label{def:indxb} {\hl{We call the} 
 above multi-gear bandit model {indexable} if there exist critical resource prices $\lambda_i^{*, a}$ for every state $i$ and active action (gear) $a \geqslant 1$ satisfying $\lambda_i^{*, A} \leqslant  \cdots \leqslant \lambda_i^{*, 1}$,  such that, for any such state and resource price $\lambda \in \mathbb{R}$: (i) action $0$ is $\lambda$-optimal in state $i$ iff $\lambda \geqslant   \lambda_i^{*, 1}$;
(ii) action $1 \leqslant a \leqslant A-1$ is $\lambda$-optimal in state $i$ iff $\lambda_i^{*, a+1} \leqslant  \lambda \leqslant   \lambda_i^{*, a}$; and (iii) action $A$ is $\lambda$-optimal in state $i$ iff $\lambda \leqslant   \lambda_i^{*, A}$. We call $\lambda_i^{*, a}$ the model's {dynamic allocation index} (DAI), viewed as a function of $(i, a)$.}
\end{definition}

\begin{remark}
\label{rem:indxb} {

\item[(i)] {The definition}
 of indexability for multi-action bandits was first outlined by Weber~\cite{weber07} in the setting of a three-action project model and was first formalized by the author~\cite{nmvaluet08a} in the general setting considered herein. The latter paper further introduced the {multi-armed multi-mode bandit problem} and proposed to use the above DAI as the basis for a heuristic index policy for it. The concept of indexability for two-gear (active/passive) bandits has its roots in the work of Bellman~\cite{bellman56}, where he characterized the optimal policies for operating a Bernoulli bandit in terms of critical parameter values. Gittins and Jones~\cite{gijo74} showed that such critical values (later known as {Gittins indices}) provide a tractable optimal policy for the classic {multi-armed bandit problem}, involving the optimal sequential activation of a collection of two-gear bandits that do not change state when passive. The idea of indexability was extended by Whittle~\cite{whit88b} to two-gear bandits that can change state when passive, called {restless bandits}. He also proposed to use the corresponding {Whittle index policy} as a heuristic for the intractable {multi-armed restless bandit problem}, when the individual bandits (projects) are {indexable}, which need not be the case as there are nonindexable bandits.
\item[(ii)] Writing as $V_i(\lambda, \langle a, *\rangle) \triangleq h_i^a + \lambda q_i^a + \beta \sum_{j \in \mathcal{N}} p_{ij}^a V_j^*(\lambda)$ the optimal cost function with initial action $a$, indexability means that there exist critical prices $\lambda_i^{*, a}$ as in Definition \ref{def:indxb} such that, for each state $i $,
\begin{equation}
\label{eq:indxovf}
\begin{split}
V_i(\lambda, \langle 0, *\rangle) \leqslant V_i(\lambda, \langle a, *\rangle) \enspace \textup{for} \enspace a \geqslant 1 & \Longleftrightarrow \lambda \geqslant   \lambda_i^{*, 1} \\
\textup{for} \enspace 0 < a < A\colon V_i(\lambda, \langle a, *\rangle) \leqslant V_i(\lambda, \langle a', *\rangle) \enspace \textup{for} \enspace a' \neq a & \Longleftrightarrow \lambda_i^{*, a+1} \leqslant  \lambda \leqslant   \lambda_i^{*, a} \\
V_i(\lambda, \langle A, *\rangle) \leqslant V_i(\lambda, \langle a, *\rangle) \enspace \textup{for} \enspace a < A & \Longleftrightarrow \lambda \leqslant   \lambda_i^{*, A}.
\end{split}
\end{equation}
\item[(iii)] In intuitive terms, when an indexable project model occupies state $i$, it is $\lambda$-optimal to select the lowest (passive) gear $0$ iff the resource is expensive enough ($\lambda \geqslant   \lambda_i^{*, 1}$); it is $\lambda$-optimal to select the highest gear $A$ iff the resource is cheap enough ($\lambda \leqslant   \lambda_i^{*, a}$); and it is $\lambda$-optimal to select the intermediate gear $0 < a < A$ iff the resource price lies between the critical prices $\lambda_i^{*, a+1}$ and $\lambda_i^{*, a}$.
\item[(iv)] In an indexable model, $\lambda_i^{*, a}$ is the unique critical resource price $\lambda$ for which gears $a-1$ and $a$ are both $\lambda$-optimal in state $i$; hence, it is the unique solution to the equation
\begin{equation}
\label{eq:usteiv}
V_i(\lambda, \langle a-1, *\rangle) = V_i(\lambda, \langle a, *\rangle).
\end{equation}
Yet, note that for a nonindexable model, Equation (\ref{eq:usteiv}) need not have a unique solution.
\item[(v)] Let $\mathcal{A}_i^*(\lambda)$ be the set of $\lambda$-optimal actions in state $i$. If the model is indexable then, for each active action $a \geqslant 1$, $\mathcal{A}_i^*(\lambda) \cap \{a, \ldots, A\} \neq \emptyset$ (i.e., there is an optimal action greater than or equal to $a$) iff $\lambda_i^{*, a} \geqslant \lambda$ and $\mathcal{A}_i^*(\lambda) \cap \{0, \ldots, a-1\} \neq \emptyset$ (i.e., there is an optimal action less than $a$) iff $\lambda_i^{*, a} \leqslant \lambda.$
}
\end{remark}

We shall address the following research goals: (1) identify sufficient conditions ensuring that the above multi-gear bandit model is indexable and (2) for models satisfying such conditions, provide an efficient means of computing the DAI.

\subsection{Project Performance Metrics and Their Characterization}
\label{s:pmetr}
To formulate the main result and facilitate the required analyses, we consider certain project \emph{performance metrics}. 
We measure the holding cost incurred by the  project under a policy 
$\pi \in \Pi$ starting from the initial-state distribution $s(0) \sim p = (p_i)_{i \in \mathcal{N}}$, so $\Prob\{s(0) = i\} = p_i$ for $i \in \mathcal{N}$, by the  \emph{(holding) cost metric}
\[
F_p(\pi) = \Ex_p^\pi\bigg[\sum_{t=0}^\infty h_{s(t)}^{a(t)} \beta^t\bigg],
\]
where $\Ex_p^\pi[\cdot]$ denotes expectation under policy $\pi$ starting from $s(0) \sim p$. 
Similarly, we measure the corresponding resource usage by the \emph{resource (usage) metric}
\[
G_p(\pi) = \Ex_p^\pi\bigg[\sum_{t=0}^\infty q_{s(t)}^{a(t)} \beta^t \bigg].
\]

When $s(0) = i$, we write $F_i(\pi)$ and $G_i(\pi)$. Note that $F_p(\pi) = \sum_i p_i F_i(\pi)$ and $G_p(\pi) = \sum_i p_i G_i(\pi)$.
As for the total cost metric  $V_i(\lambda, \pi)$ in (\ref{eq:vipil}), we can  
thus express it as
\[
V_i(\lambda, \pi) = F_i(\pi) + \lambda G_i(\pi).
\]

We shall similarly write $V_p(\lambda, \pi)$ when $s(0) \sim p$.

We next address the characterization of such metrics for stationary deterministic policies. 
We will represent any such policy by the partition $S = (S_a)_{a \in \mathcal{A}} = (S_0, \ldots, S_A)$ it naturally induces on the state space $\mathcal{N}$, where $S_a$ is the subset of states where the policy selects gear $a$.
We shall refer to it as the \emph{$S$-policy} or \emph{policy $S$}. 

The performance metrics $F_i(S)$ and $G_i(S)$ for the $S$-policy are thus characterized as the unique solutions to the linear equation systems
\[
F_i(S) = h_i^a + \beta \sum_{j \in \mathcal{N}} p_{ij}^a F_j(S), \enspace i \in S_a, a \in \mathcal{A},
\]
and
\[
G_i(S) = q_i^a + \beta \sum_{j \in \mathcal{N}} p_{ij}^a G_j(S), \enspace i \in S_a, a \in \mathcal{A}.
\]

In the sequel we will find it convenient to use vector notation,  denoting vectors and matrices in boldface and writing, \hl{e.g.,} 
 $\mathbf{F}(S) = (F_i(S))_{i \in \mathcal{N}}$ and $\mathbf{F}_B(S) = (F_i(S))_{i \in B}$ for \mbox{$B \subset \mathcal{N}$}, and similarly for $\mathbf{G}(S)$, $\mathbf{h}^a$ and $\boldsymbol{q}^a$. We will also write $\mathbf{P}^a = (p_{ij}^a)_{i, j \in \mathcal{N}}$, \mbox{$\mathbf{P}_{B B'}^a = (p_{ij}^a)_{i \in B, j \in B'}$} and $\mathbf{P}_{B \boldsymbol{\cdot}}^a = (p_{ij}^a)_{i \in B, j \in \mathcal{N}}$ for 
$B, B' \subset \mathcal{N}$.

The above equations characterizing $\mathbf{F}(S)$ and $\mathbf{G}(S)$ are thus formulated as
\begin{equation}
\label{eq:FSaSeq}
\mathbf{F}_{S_a}(S) = \mathbf{h}_{S_a}^a + \beta \mathbf{P}_{S_a \boldsymbol{\cdot}}^a \mathbf{F}(S), \enspace a \in \mathcal{A}.
\end{equation}
and
\begin{equation}
\label{eq:GSaSeq}
\mathbf{G}_{S_a}(S) = \boldsymbol{q}_{S_a}^a + \beta \mathbf{P}_{S_a \boldsymbol{\cdot}}^a \mathbf{G}(S), \enspace a \in \mathcal{A}.
\end{equation}

We shall further consider corresponding marginal metrics.
Denote by $\langle a, S\rangle$ the policy that selects gear $a$ at time $t = 0$ and then adopts the $S$-policy thereafter. Note that
\[
F_i(\langle a, S\rangle) = h_i^{a} + \beta \sum_{j \in \mathcal{N}} p_{ij}^{a} F_j(S)
\]
and
\[
G_i(\langle a, S\rangle) = q_i^{a} + \beta \sum_{j \in \mathcal{N}} p_{ij}^{a} G_j(S).
\]

For given actions $a \neq a'$,  we define the \emph{marginal (holding) cost metric}
\begin{equation}
\label{eq:fiaapS}
f_i^{a, a'}(S) \triangleq F_i(\langle a, S\rangle) - F_i(\langle a', S\rangle) = 
h_i^{a} - h_i^{a'} + 
\beta \sum_{j \in \mathcal{N}} p_{ij}^{a} F_j(S) - \beta \sum_{j \in \mathcal{N}} p_{ij}^{a'} F_j(S),
\end{equation}
which measures the \emph{decrement} in the  holding cost metric that results from the shifting of the initial gear from $a$ to $a'$ starting from state $i$, provided that the $S$-policy is followed thereafter.

We also define the \emph{marginal resource (usage) metric}
\begin{equation}
\label{eq:giaapS}
g_i^{a, a'}(S) \triangleq G_i(\langle a', S\rangle) - G_i(\langle a, S\rangle) =  q_i^{a'} - q_i^{a} + 
\beta \sum_{j \in \mathcal{N}} p_{ij}^{a'} G_j(S) - \beta \sum_{j \in \mathcal{N}} p_{ij}^a G_j(S),
\end{equation}
which measures the corresponding \emph{increment} in the resource metric.

In vector notation, we can write the above identities as
\begin{equation}
\label{eq:vfiaapS}
\mathbf{f}^{a, a'}(S) \triangleq \mathbf{F}(\langle a, S\rangle) - \mathbf{F}(\langle a', S\rangle) = 
\mathbf{h}^{a} - \mathbf{h}^{a'} + 
\beta (\mathbf{P}^{a}  - \mathbf{P}^{a'}) \mathbf{F}(S)
\end{equation}
and
\begin{equation}
\label{eq:vgiaapS}
\mathbf{g}^{a, a'}(S) \triangleq \mathbf{G}(\langle a', S\rangle) - \mathbf{G}(\langle a, S\rangle) =  \boldsymbol{q}^{a'} - \boldsymbol{q}^{a} + 
\beta  (\mathbf{P}^{a'}  - \mathbf{P}^a) \mathbf{G}(S).
\end{equation}

If $g_i^{a, a'}(S) > 0$ for certain $i$, $a$, $a'$, and $S$, we further define the \emph{marginal productivity (MP) metric} as the ratio of the marginal cost metric to  the marginal resource metric:
\begin{equation}
\label{eq:lambdiaapS}
m_i^{a, a'}(S) \triangleq \frac{f_i^{a, a'}(S)}{g_i^{a, a'}(S)}.
\end{equation}

We next present a preliminary result, on which we draw later on, establishing further relations between metrics $\mathbf{F}(S)$ and $\mathbf{f}^{a, a'}(S)$ and between $\mathbf{G}(S)$ and $\mathbf{g}^{a, a'}(S)$. Note that, below, $\mathbf{0}_{S_a}$ denotes  a vector of zeros with components indexed by $S_a$.

\begin{lemma}
\label{lma:civisrel}
For any stationary deterministic policy $S$ and action $a,$
\begin{itemize}
\item[(a)] $
 (\mathbf{I} - \beta \mathbf{P}^a) \mathbf{F}(S)  - \mathbf{h}^a = 
\begin{bmatrix}
(\mathbf{f}_{S_{a'}}^{a', a}(S))_{a' \in \mathcal{A}-\{a\}} \\
\mathbf{0}_{S_a}
\end{bmatrix};
$
\item[(b)] $
\boldsymbol{q}^a - (\mathbf{I} - \beta \mathbf{P}^a) \mathbf{G}(S)  = 
\begin{bmatrix}
(\mathbf{g}_{S_{a'}}^{a', a}(S))_{a' \in \mathcal{A}-\{a\}} \\
\mathbf{0}_{S_a}
\end{bmatrix}.
$
\end{itemize}
\end{lemma}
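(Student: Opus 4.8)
The plan is to verify each identity componentwise, splitting the state space $\mathcal{N}$ according to the partition $S = (S_{a'})_{a' \in \mathcal{A}}$ and treating the block indexed by $S_a$ separately from the blocks indexed by $S_{a'}$ with $a' \neq a$. For part (a), fix a state $i$ and consider the $i$-th component of $(\mathbf{I} - \beta \mathbf{P}^a)\mathbf{F}(S) - \mathbf{h}^a$, which equals $F_i(S) - h_i^a - \beta \sum_{j} p_{ij}^a F_j(S)$. If $i \in S_a$, this is exactly the defining equation \eqref{eq:FSaSeq} for $\mathbf{F}_{S_a}(S)$, so the component vanishes, matching the $\mathbf{0}_{S_a}$ block. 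If instead $i \in S_{a'}$ for some $a' \neq a$, then $F_i(S) = F_i(\langle a', S\rangle) = h_i^{a'} + \beta \sum_j p_{ij}^{a'} F_j(S)$, and substituting this gives
\[
F_i(S) - h_i^a - \beta \sum_{j} p_{ij}^a F_j(S) = h_i^{a'} - h_i^{a} + \beta \sum_j p_{ij}^{a'} F_j(S) - \beta \sum_j p_{ij}^{a} F_j(S),
\]
which is precisely $f_i^{a', a}(S)$ by definition \eqref{eq:fiaapS}. This identifies the $S_{a'}$-block with $\mathbf{f}_{S_{a'}}^{a', a}(S)$, completing (a).

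Part (b) is entirely parallel, now using \eqref{eq:GSaSeq} for the $S_a$-block and the definition \eqref{eq:giaapS} of $g_i^{a, a'}(S)$ for the off-diagonal blocks; one should only be careful with the sign convention, since $\mathbf{g}^{a, a'}(S)$ is defined with the $a'$-term minus the $a$-term (so that it measures an increment), which is exactly why the left-hand side of (b) is written as $\boldsymbol{q}^a - (\mathbf{I} - \beta \mathbf{P}^a)\mathbf{G}(S)$ rather than the reverse. Concretely, for $i \in S_{a'}$ with $a' \neq a$, substituting $G_i(S) = q_i^{a'} + \beta \sum_j p_{ij}^{a'} G_j(S)$ into $q_i^a - G_i(S) + \beta \sum_j p_{ij}^a G_j(S)$ yields $q_i^{a'} - q_i^a + \beta \sum_j p_{ij}^{a'} G_j(S) - \beta \sum_j p_{ij}^a G_j(S) = g_i^{a, a'}(S)$.

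This argument is essentially bookkeeping: the only genuine content is recognizing that the defining fixed-point equations for $\mathbf{F}(S)$ and $\mathbf{G}(S)$ hold with the policy's own action on $S_a$ but with a \emph{different} action $a$ on the states in $S_{a'}$, and that the resulting mismatch is by construction the marginal metric. The main thing to get right — and the only place an error could creep in — is the orientation of the difference in the definitions of $\mathbf{f}^{a,a'}$ and $\mathbf{g}^{a,a'}$ relative to the sign on the left-hand sides of (a) and (b); I would state the componentwise computation for a generic $i \in S_{a'}$ explicitly to make the matching transparent, and then note that stacking these blocks over $a' \in \mathcal{A} - \{a\}$ together with the zero block on $S_a$ gives the claimed vector identities.
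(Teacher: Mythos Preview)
Your approach is essentially identical to the paper's: both verify the identity blockwise, using the evaluation equations \eqref{eq:FSaSeq}--\eqref{eq:GSaSeq} on the $S_a$-block and the definitions \eqref{eq:fiaapS}--\eqref{eq:giaapS} on the $S_{a'}$-blocks; the paper works in vector notation while you work componentwise, but the content is the same.

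One slip to fix in part (b): substituting $G_i(S) = q_i^{a'} + \beta \sum_j p_{ij}^{a'} G_j(S)$ into $q_i^a - G_i(S) + \beta \sum_j p_{ij}^a G_j(S)$ gives
\[
q_i^a - q_i^{a'} + \beta \sum_j p_{ij}^{a} G_j(S) - \beta \sum_j p_{ij}^{a'} G_j(S) = g_i^{a', a}(S),
\]
not $g_i^{a, a'}(S)$ as you wrote; you inadvertently flipped the sign in the intermediate expression, which is exactly the pitfall you warned yourself about. The statement's right-hand side has $\mathbf{g}_{S_{a'}}^{a', a}(S)$, so the corrected computation matches.
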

\begin{proof}
(a) For $a' \neq a$, using in turn (\ref{eq:vfiaapS}) and (\ref{eq:FSaSeq}) we obtain
\begin{align*}
\mathbf{f}_{S_{a'}}^{a', a}(S) & = \mathbf{h}_{S_{a'}}^{a'} - \mathbf{h}_{S_{a'}}^{a} + 
\beta (\mathbf{P}_{S_{a'} \boldsymbol{\cdot}}^{a'} - \mathbf{P}_{S_{a'} \boldsymbol{\cdot}}^{a})\mathbf{F}(S) \\
& = (\mathbf{F}_{S_{a'}}(S) - \mathbf{h}_{S_{a'}}^{a'} - \beta \mathbf{P}_{S_{a'} \boldsymbol{\cdot}}^{a'} \mathbf{F}(S))  + 
(\mathbf{h}_{S_{a'}}^{a'} - \mathbf{h}_{S_{a'}}^{a} + 
\beta (\mathbf{P}_{S_{a'} \boldsymbol{\cdot}}^{a'} - \mathbf{P}_{S_{a'} \boldsymbol{\cdot}}^{a})\mathbf{F}(S)) \\
& = 
\mathbf{F}_{S_{a'}}(S)  -  \beta \mathbf{P}_{S_{a'} \boldsymbol{\cdot}}^{a} \mathbf{F}(S) - \mathbf{h}_{S_{a'}}^{a},
\end{align*}
and
\[
\mathbf{F}_{S_{a}}(S)  -  \beta \mathbf{P}_{S_{a} \boldsymbol{\cdot}}^{a} \mathbf{F}(S) - \mathbf{h}_{S_{a}}^{a} = \mathbf{0}_{S_a}.
\]

Part (b) follows similarly.
\end{proof}

\subsection{Main Result: A Verification Theorem for Indexability}
\label{s:mrvti}
We next present our main result, giving sufficient conditions for indexability. The conditions
correspond to the framework of \emph{PCL-indexability}, which is extended here from the two-gear setting in~\cite{nmaap01,nmmp02,nmmor06,nmmor20} to the 
multi-gear setting.

We will use the following notation. Given a stationary deterministic policy {$S = (S_0, \ldots, S_A)$},  
actions $a \neq a'$,  and a state $j \in S_{a}$, the policy denoted by $\widehat{S} = \mathcal{T}_j^{a, a'} S$ is defined by 
$\widehat{S}_a = S_a - \{j\}$, $\widehat{S}_{a'} = S_{a'} \cup \{j\}$, and $\widehat{S}_{a''} = S_{a''}$ for $a \neq a'' \neq a'$. Thus, $\mathcal{T}_j^{a, a'} S$ is obtained from $S$ by shifting the gear selected in state $j$ from $a$ to $a'$.

The verification theorem below refers to indexability relative to a structured family $\mathcal{F}$ of stationary deterministic policies, which one needs to postulate a priori, based on insight on the particular model at hand.
We shall thus refer to the family of \emph{$\mathcal{F}$-policies} $S \in \mathcal{F}$, and to \emph{$\mathcal{F}$-indexability}, as defined below.

\begin{definition}[$\mathcal{F}$-indexability]
\label{def:Findxb} {
We call the model {$\mathcal{F}$-indexable} if (i) it is indexable and (ii) $\mathcal{F}$-policies are optimal for the $\lambda$-price problem $(P_\lambda)$ in (\ref{eq:lambdapp}), for any $\lambda \in \mathbb{R}.$
}
\end{definition}

Note that Definition \ref{def:Findxb}(ii) refers to the \emph{optimality of $\mathcal{F}$-policies} for all $\lambda$-price problems $(P_\lambda)$. By this we mean that, for any $\lambda \in \mathbb{R}$, there exists a $\lambda$-optimal policy $S^*(\lambda) \in \mathcal{F}$.

We require $\mathcal{F}$ to satisfy the following \emph{connectedness assumption}, which is motivated by algorithmic considerations. The assumption ensures that it is possible to go from policy $(\emptyset, \ldots, \emptyset, \mathcal{N})$ to $(\mathcal{N}, \emptyset, \ldots, \emptyset)$, both of which must be in $\mathcal{F}$,  through a sequence of policies in $\mathcal{F}$ where each policy in the sequence is obtained from the previous one by \emph{downshifting} the gear selected in a single state to the next lower gear. Conversely, it is possible to go from $(\mathcal{N}, \emptyset, \ldots, \emptyset)$ to $(\emptyset, \ldots, \emptyset, \mathcal{N})$  through a sequence of policies in $\mathcal{F}$ where each policy in the sequence is obtained from the previous one by \emph{upshifting} the gear selected in a single state to the next higher gear.

\begin{assumption} {
\label{s:assF} The family of policies $\mathcal{F}$ satisfies the following conditions:
\begin{itemize}
\item[(i)] $(\emptyset, \ldots, \emptyset, \mathcal{N}) \in \mathcal{F}$ and $(\mathcal{N}, \emptyset, \ldots, \emptyset) \in \mathcal{F};$
\item[(ii)] For each $S \in \mathcal{F} -  \{(\mathcal{N}, \emptyset, \ldots, \emptyset)\}$ there exist $a \geqslant 1$ and $j \in S_a$ such that $\mathcal{T}_j^{a, a-1} S \in \mathcal{F};$
\item[(iii)] For each $S \in \mathcal{F} - \{(\emptyset, \ldots, \emptyset, \mathcal{N})\}$
 there exist $a < A$ and $j \in S_a$ such that $\mathcal{T}_j^{a, a+1} S \in \mathcal{F}.$
\end{itemize}}
\end{assumption}

Note that the above concepts of downshifting and upshifting gears naturally induce a \emph{partial ordering} $\preceq$ on the class of all stationary deterministic policies $S$ and in particular on $\mathcal{F}$.
Thus, given $S$ and $S'$, we write $S \preceq S'$ if, at every state, $S$ does not select a higher gear  than $S'$. If, further, $S \neq S'$, we write  
$S \prec S'$.
Assumption \ref{s:assF} shows that the \emph{poset} (partially ordered set) $(\mathcal{F}, \preceq)$ contains the least element $(\mathcal{N}, \emptyset, \ldots, \emptyset)$ and the largest element $(\emptyset, \ldots, \emptyset, \mathcal{N})$.

The verification theorem refers to the \emph{downshift adaptive-greedy index algorithm} $\mathrm{DS}(\mathcal{F})$ shown in Algorithm \ref{alg:tdabutd}.
This takes as input the model parameters and, in $K \triangleq A N$ steps,  produces as output a sequence of distinct state--action pairs $(j_k, a_k)$ spanning $\mathcal{N} \times (\mathcal{A} - \{0\})$ along with corresponding sequences of $\mathcal{F}$-policies $S^k$ and scalars $m_{j_k}^{*, a_k}$ for $k = 1,\ldots,  K$. 
Actually, the sequence $S^k$ is a \emph{chain} of the poset $(\mathcal{F}, \preceq)$ ordered as 
\[
(\mathcal{N}, \emptyset, \ldots, \emptyset) = S^{K+1} \prec S^{K} \prec \cdots \prec S^{1} = (\emptyset, \ldots, \emptyset, \mathcal{N}).
\]

\begin{algorithm}[H]
\caption{\hl{Downshift} 
 adaptive-greedy index algorithm $\mathrm{DS}(\mathcal{F})$.}
{%
\begin{minipage}{2.5in}
\textbf{Output:}
$\{(j_k, a_k), S^k, m_{j_k}^{*, a_k}\}_{k=1}^{K}$
\begin{tabbing}
\textit{Initialization:} 
$S^1 := (\emptyset, \ldots, \emptyset, \mathcal{N});$ \enspace  $a_1 := A$  \\
\textbf{pick} $ j_1 \in \argmin_{j \in \mathcal{N}, \mathcal{T}_j^{a_1, a_1-1} S^1 \in \mathcal{F}} \, m_j^{a_1-1, a_1}(S^1)$ \\
 $m_{j_1}^{*, a_1} := m_{j_1}^{a_1-1, a_1}(S^1);$ \enspace 
$S^2 := \mathcal{T}_{j_1}^{a_1, a_1-1} S^1$ \\
\textit{Loop:} \\
\textbf{for} \= $k := 2$ \textbf{to}  $K$ \textbf{do} \\
\> \textbf{pick}  
 $(j_k, a_k) \in \argmin_{(j, a)\colon  j \in S_a^{k}, \mathcal{T}_j^{a, a-1} S^{k} \in \mathcal{F}} \, 
      m_j^{a-1, a}(S^{k})$, \\
\>    \quad  with $m_j^{a-1, a}(S^{k})
 := m_{j_{k-1}}^{*, a_{k-1}} + 
 \frac{g_j^{a-1, a}(S^{k-1})}{g_j^{a-1, a}(S^k)} (m_j^{a-1, a}(S^{k-1})  - m_{j_{k-1}}^{*, a_{k-1}})$ \\
\>   $m_{j_k}^{*, a_k} := 
 m_{j_k}^{a_k-1, a_k}(S^{k})$;  \, $S^{k+1} := \mathcal{T}_{j_k}^{a_k, a_k-1} S^{k}$ \\
\textbf{end} \{ for \}
\end{tabbing}
\end{minipage}}
\label{alg:tdabutd}
\end{algorithm}

\begin{remark}
\label{rem:tdabutd} {

\item[(i)] {Algorithm} 
 $\mathrm{DS}(\mathcal{F})$ extends to multi-gear bandits the adaptive-greedy algorithm for computing the Whittle index for restless (two-gear) bandits introduced by the author in~\cite{nmaap01} and further developed in~\cite{nmmp02}. In turn, this has its early roots in Klimov's adaptive-greedy index algorithm~\cite{kl74}  for computing the indices that give the optimal policy for scheduling a multi-class queue with feedback. Note that Klimov's algorithm was first adapted in~\cite{bnm96} to compute the Gittins index for  classic bandits (i.e., two-gear bandits that do not change state when passive).
\item[(ii)] 
The term {downshift} refers to the way in which the algorithm generates the sequence $S^k$ of $\mathcal{F}$-policies. It starts with policy $S^1$, which selects the highest gear $A$ in every state. 
Then, at each step $k$ of the algorithm, it selects a state $j_k$ in which to downshift gears from $a_k$ to $a_{k}-1$, keeping the same gears in the other states, thus obtaining the next policy $S^{k+1}$. The selection of such a state $j_k$ is performed in an {adaptive-greedy} fashion, by choosing a state in which such a downshifting change entails a minimal MP decrease, as measured by the MP metric $m_j^{a-1, a}(S^{k})$, while also ensuring that the next policy $S^{k+1}$ will be in $\mathcal{F}$.
One can visualize the workings of the algorithm in terms of balls trickling down a grid in which states are positioned in columns and gears in rows, with gear $A$ at the top.
Initially, all balls are in the top row, as the algorithm starts with policy $S^1 \triangleq (\emptyset, \ldots, \emptyset, \mathcal{N})$, so gear $A$ is chosen in every state.
Then, at each step of the algorithm, one ball trickles down from its current row to that immediately below, which represents another $\mathcal{F}$-policy. The algorithm ends in $K$ steps when all $N$ balls have trickled down to the bottom row, which corresponds to policy $S^{K+1} \triangleq (\mathcal{N}, \emptyset, \ldots, \emptyset)$, so gear $0$ is chosen in every state.
\item[(iii)] 
Note that, by construction, (1) each policy $S^k$ in the sequence produced by the algorithm satisfies $S^k \in \mathcal{F}$, i.e., it is an $\mathcal{F}$-policy, for $k = 1, \ldots, K+1$, and (2) the state--action pairs $(j_k, a_k)$ produced by the algorithm are all distinct, spanning the $K$ state--action pairs $(j, a) \in \mathcal{N} \times (\mathcal{A} - \{0\})$ corresponding to active gears $a \geqslant 1$.
}
\end{remark}

\begin{definition}[PCL$(\mathcal{F})$-indexability and MP index]
\label{def:pclind} { 
We call a multi-gear bandit model {PCL-indexable} with respect to $\mathcal{F}$-policies, or  {PCL$(\mathcal{F})$-indexable}, if the following hold: 
\begin{enumerate}[label={\rm (\subscript{PCLI}{{\arabic*}})}]
\item  $g_{j}^{a-1, a}(S) > 0$ for every policy $S \in \mathcal{F}$, active action $a \geqslant 1$, and state $j \in \mathcal{N};$
\item  Algorithm $\mathrm{DS}(\mathcal{F})$ computes the $m_{j_k}^{*,  a_k}$ in nondecreasing order: \[
m_{j_{1}}^{*, a_{1}} \leqslant m_{j_2}^{*,  a_2} \leqslant \cdots \leqslant m_{j_K}^{*,  a_K}.
\]
 \end{enumerate}
 
In such a case, we call $m_j^{*, a}$ the project's {MP index} or {MPI} for short.}
\end{definition}

\begin{remark}
\label{rem:tdabutd2} {

\item[(i)] {Condition} 
 (PCLI1) means that the marginal resource metric corresponding to upshifting gears in any state, relative to any $\mathcal{F}$-policy, is positive. Note that it is equivalent to requiring that $g_{j}^{a', a}(S) > 0$ for $a' < a$, since $g_{j}^{a', a}(S) = g_{j}^{a', a'+1}(S) + \cdots + g_{j}^{a-1, a}(S)$.
 Such a condition and the fact that the $S^k$ are in $\mathcal{F}$ ensures that the MP index $m_{j}^{*,  a}$ computed by the algorithm is well defined. 
\item[(ii)] The recursive  formula used in the algorithm for computing the MP metrics as 
\begin{equation}
\label{eq:recindcpt}
m_j^{a-1, a}(S^{k})
 := m_{j_{k-1}}^{*, a_{k-1}} + 
 \frac{g_j^{a-1, a}(S^{k-1})}{g_j^{a-1, a}(S^k)} (m_j^{a-1, a}(S^{k-1})  - m_{j_{k-1}}^{*, a_{k-1}})
 \end{equation}
 is justified by Lemma \ref{lma:recindcomp}.
}
\end{remark}

We next state the verification theorem.
\begin{theorem}
\label{the:verthe}
If a multi-gear bandit model is PCL$(\mathcal{F})$-indexable, then it is  $\mathcal{F}$-indexable with its DAI being given by its MPI, i.e.,   $\lambda_j^{*, a} = m_j^{*, a}$.
\end{theorem}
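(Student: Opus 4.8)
The plan is to follow the classical PCL ``verification theorem'' route: recast the parametric family $(P_\lambda)$ as a parametric linear program over the achievable performance region, show that the postulated family $\mathcal{F}$ obeys partial conservation laws, and certify the resulting index policy by a matching dual solution read off from the output of $\mathrm{DS}(\mathcal{F})$. First I would use the LP reformulation of Section~\ref{s:lpreflpp}: for a fixed initial distribution $p$ and price $\lambda$, the problem $\min_{\pi\in\Pi}V_p(\lambda,\pi)$ is a linear program $\min\{\sum_{i,a}x_i^a(h_i^a+\lambda q_i^a)\colon \mathbf{x}\in\mathbb{X}\}$ over the bounded polytope $\mathbb{X}$ of discounted state--action occupation measures, whose extreme points are the occupation measures of the stationary deterministic policies. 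Using the marginal-metric identities \eqref{eq:vfiaapS}--\eqref{eq:vgiaapS} and Lemma~\ref{lma:civisrel}, one rewrites $V_p(\lambda,S)$ along the downshift chain: relative to the baseline $S^{K+1}=(\mathcal{N},\emptyset,\ldots,\emptyset)$, the cost of an $\mathcal{F}$-policy is a sum of per-state-gear increments of the form $g_j^{a-1,a}(\cdot)\,(m_j^{a-1,a}(\cdot)-\lambda)$, so downshifting state $j$ from gear $a$ to $a-1$ lowers $V_p$ exactly when its marginal productivity lies below $\lambda$. Condition (PCLI1) makes every $g_j^{a-1,a}(\cdot)$ occurring here strictly positive, so the MP metrics and the recursively computed indices $m_{j_k}^{*,a_k}$ (Lemma~\ref{lma:recindcomp}, cf.\ Remark~\ref{rem:tdabutd2}) are well defined.

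Next I would invoke the partial conservation laws of Section~\ref{s:pcl} to show that the portion of $\mathbb{X}$ pertinent to $\mathcal{F}$ is an extended-polymatroid-type polytope whose extreme points are precisely the occupation measures of $\mathcal{F}$-policies, and that the chain $S^{K+1}\prec\cdots\prec S^1$ produced by $\mathrm{DS}(\mathcal{F})$ visits adjacent extreme points in the order prescribed by the adaptive-greedy rule. For given $\lambda$, set $k(\lambda):=\max\{k\colon m_{j_k}^{*,a_k}\le\lambda\}$ (with $k(\lambda):=0$ if no such $k$ exists) and $S_\lambda:=S^{k(\lambda)+1}$; by Remark~\ref{rem:tdabutd}(iii) this is an $\mathcal{F}$-policy that selects in each state $j$ a gear $a$ with $m_j^{*,a+1}\le\lambda\le m_j^{*,a}$ (using the conventions $m_j^{*,A+1}:=-\infty$, $m_j^{*,0}:=+\infty$). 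The monotonicity condition (PCLI2) is exactly what makes the Klimov-type dual solution attached to this greedy ordering \emph{feasible}; weak LP duality then gives that $S_\lambda$ is optimal over \emph{all} of $\Pi$ for $(P_\lambda)$, and complementary slackness sharpens this into a characterization of the whole $\lambda$-optimal action set $\mathcal{A}_j^*(\lambda)$: action $a$ is $\lambda$-optimal in state $j$ iff $m_j^{*,a+1}\le\lambda\le m_j^{*,a}$, together with the endpoint cases $\lambda\ge m_j^{*,1}$ for $a=0$ and $\lambda\le m_j^{*,A}$ for $a=A$.

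Finally I would read off indexability. Since $\mathrm{DS}(\mathcal{F})$ can downshift the gear of any fixed state $j$ only through the successive transitions $A\to A-1$, $A-1\to A-2$, \ldots, $1\to 0$, the values $m_j^{*,A},m_j^{*,A-1},\ldots,m_j^{*,1}$ are assigned at strictly increasing steps, and so (PCLI2) forces $m_j^{*,A}\le\cdots\le m_j^{*,1}$. Setting $\lambda_j^{*,a}:=m_j^{*,a}$ then reproduces Definition~\ref{def:indxb} verbatim, establishing indexability with DAI equal to MPI; and since each $S_\lambda\in\mathcal{F}$ is $\lambda$-optimal, Definition~\ref{def:Findxb} yields $\mathcal{F}$-indexability.

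I expect the main obstacle to be the second step: proving that PCL$(\mathcal{F})$-indexability genuinely implies the partial conservation laws and that the output of $\mathrm{DS}(\mathcal{F})$ provides a dual-optimal certificate \emph{simultaneously for the entire continuum} of prices $\lambda$. The crux is that the marginal metrics $m_j^{a-1,a}(S^k)$ are policy-dependent (``adaptive'') along the chain, so the greedy values computed at different steps must be reconciled through the recursion \eqref{eq:recindcpt}; and one must rule out any policy outside $\mathcal{F}$ beating $S_\lambda$, which is exactly where the containment of the whole achievable region $\mathbb{X}$ in the conservation-law polytope is needed.
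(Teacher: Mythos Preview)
Your plan is essentially the paper's route---use the LP reformulation, the PCLs of Section~\ref{s:pcl}, and the output of $\mathrm{DS}(\mathcal{F})$ to certify optimality of $S^{k(\lambda)+1}$---but the execution differs in two places worth flagging.

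First, the paper does \emph{not} argue via an extended-polymatroid characterization or an explicit dual certificate. Instead it works purely on the primal side: Lemma~\ref{lma:objduallamb} rewrites $\widehat{V}_p(\lambda,\pi)$, for an \emph{arbitrary} admissible $\pi$, as a telescoping sum whose terms are exactly the left-hand sides of the PCL inequalities in Proposition~\ref{pro:pcls}, each multiplied by a coefficient that is nonnegative under (PCLI2) and the assumed range of $\lambda$. Lemma~\ref{lma:objduallamb2} evaluates the same expression at $\pi=S^l$, where the PCLs hold with equality. Comparing term by term gives $\widehat{V}_p(\lambda,\pi)\geq\widehat{V}_p(\lambda,S^l)$ directly. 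Your claim that the relevant polytope has extreme points ``precisely the occupation measures of $\mathcal{F}$-policies'' is neither proved nor needed; the PCL inequalities of Proposition~\ref{pro:pcls} already hold for every $\pi\in\Pi$, so policies outside $\mathcal{F}$ are ruled out automatically. Trying to establish that extreme-point characterization would be a detour and possibly false without further hypotheses.

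Second, for the converse (``only if'') the paper does not appeal to complementary slackness. It uses Lemma~\ref{lma:lambdapprec}(e,f), which expresses $V_p(\lambda,S^{l\pm1})-V_p(\lambda,S^l)$ as $(m_{j_\bullet}^{*,a_\bullet}-\lambda)$ times a strictly positive factor (strict positivity coming from (PCLI1) and $p>0$). If $S^l$ is $\lambda$-optimal these differences are nonnegative, forcing $m_{j_{l-1}}^{*,a_{l-1}}\le\lambda\le m_{j_l}^{*,a_l}$. Your complementary-slackness argument would presumably reach the same conclusion, but you would need to exhibit the dual solution explicitly and check strictness of the relevant inequalities; the paper's neighbor-comparison via Lemma~\ref{lma:lambdapprec} is shorter. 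Your final paragraph (reading off $m_j^{*,A}\le\cdots\le m_j^{*,1}$ from (PCLI2) and the order in which $\mathrm{DS}(\mathcal{F})$ visits gears of a fixed state) is correct and matches the paper.
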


The proof of Theorem \ref{the:verthe} is presented in Section \ref{s:proofotvthe}. 
It requires substantial preliminary groundwork, which is laid out in Sections \ref{s:lpreflpp}--\ref{s:pcl}.

\section{Linear Optimization Reformulation of the \boldmath{$\lambda$}-Price Problem}
\label{s:lpreflpp}
We start the required groundwork by reviewing the standard \emph{linear optimization} (LO) formulation of a finite-state and -action MDP (see, e.g., (\cite{put94} Sec.\ 6.9)), since it applies to the $\lambda$-price problem $(P_\lambda)$ in (\ref{eq:lambdapp}), as it is needed in subsequent analyses. 
It is well known  that such an MDP  can be reformulated as an LO problem on variables $x_j^a$ for state--action pairs $(j, a) \in \mathcal{K} \triangleq \mathcal{N} \times \mathcal{A}$, which represent \emph{discounted state--action occupancy measures}. Thus, variable $x_j^a$ corresponds to 
the measure
\[
x_{pj}^{a}(\pi) \triangleq \Ex_{p}^{\pi}\bigg[\sum_{t=0}^\infty \mathbf{1}_{\{(j, a)\}}(s(t), a(t)) \beta^t\bigg],
\] 
where $\mathbf{1}_B(\cdot)$ denotes the indicator function of a set $B$, so 
$x_{pj}^{a}(\pi)$ is the expected total discounted number of times that action $a$ is selected in state $j$ under policy $\pi$ starting from $s(0) \sim p$. We write it as $x_{ij}^{a}(\pi)$ when $s(0) = i$.

Such a standard LO formulation is 
\begin{equation}
\label{eq:plp}
\begin{split}
(L_{\lambda}(p))\colon \qquad & \minim \, \sum_{(j, a) \in \mathcal{K}} (h_j^a + \lambda q_j^a) x_j^a \\
& \st\colon  x_j^a \geqslant 0 \\
& \sum_{a \in \mathcal{A}} x_j^a - \beta \sum_{(i, a) \in \mathcal{K}}  p_{ij}^a x_i^a = p_j, \enspace 
j \in \mathcal{N},
\end{split}
\end{equation}
or, in vector notation, writing the probability mass function $p$ as a vector $\mathbf{p}$,
\begin{equation}
\label{eq:plpmf}
\begin{split}
(L_{\lambda}(p))\colon \qquad & \minim \, \sum_{a \in \mathcal{A}} \mathbf{x}^a (\mathbf{h}^a + \lambda \boldsymbol{q}^a) \\
& \st\colon  \mathbf{x}^a \geqslant \mathbf{0}, \enspace a \in \mathcal{A} \\
& \sum_{a \in \mathcal{A}} \mathbf{x}^a (\mathbf{I} - \beta \mathbf{P}^a) = \mathbf{p}.
\end{split}
\end{equation}

We next use the above LO formulation to show that the $\lambda$-price problem $(P_\lambda)$ in (\ref{eq:lambdapp}) can be reformulated into an equivalent problem in which  holding costs under action $A$ are zero, a result on which we draw later on.

Thus, define 
\begin{equation}
\label{eq:hatha}
\widehat{\mathbf{h}}^a \triangleq \mathbf{h}^a - (\mathbf{I} - \beta \mathbf{P}^a) (\mathbf{I} - \beta \mathbf{P}^A)^{-1}  \mathbf{h}^A, \quad a \in \mathcal{A},
\end{equation}
and note that 
\begin{equation}
\label{eq:haththetaazero}
\widehat{\mathbf{h}}^A = \mathbf{0}.
\end{equation}

Denote by $\widehat{F}_i(\pi)$, $\widehat{f}_i^{a, a'}(S)$, and $\widehat{m}_i^{a, a'}(S)$ the cost, marginal cost, and marginal productivity metrics defined in Section \ref{s:pmetr}, but for the model with modified holding costs $\widehat{h}_i^a$  in (\ref{eq:hatha}).
The following result clarifies the relations between such metrics and those for the original holding costs $h_i^a$.
Recall that we denote by $S^1 \triangleq (\emptyset, \ldots, \emptyset, \mathcal{N})$ the policy that selects the highest gear in every state.

\begin{lemma}
\label{lma:relhhhat} For any admissible policy $\pi$, stationary deterministic policy $S$, and actions $a \neq a'$:
\begin{itemize}
\item[(a)] $\widehat{F}_p(\pi) = F_p(\pi) - F_p(S^1);$
\item[(b)] $\widehat{f}_i^{a, a'}(S) = f_i^{a, a'}(S);$
\item[(c)] $\widehat{m}_i^{a, a'}(S) = m_i^{a, a'}(S).$
\end{itemize}
\end{lemma}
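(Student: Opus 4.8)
The strategy is to establish part (a) by a direct manipulation of the discounted occupancy-measure representation, and then to derive parts (b) and (c) as essentially free consequences. The key observation is that $\widehat{\mathbf h}^a$ in \eqref{eq:hatha} is obtained from $\mathbf h^a$ by subtracting a quantity of the form $(\mathbf I - \beta\mathbf P^a)\mathbf c$ with $\mathbf c \triangleq (\mathbf I - \beta\mathbf P^A)^{-1}\mathbf h^A$ a fixed vector not depending on $a$. Since $\mathbf c = \mathbf F(S^1)$ by \eqref{eq:FSaSeq} (as $S^1$ selects gear $A$ everywhere, so $(\mathbf I - \beta\mathbf P^A)\mathbf F(S^1) = \mathbf h^A$), this rewrites as $\widehat{\mathbf h}^a = \mathbf h^a - (\mathbf I - \beta\mathbf P^a)\mathbf F(S^1)$, which makes the role of $S^1$ transparent and immediately gives \eqref{eq:haththetaazero}.

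For part (a), I would write $\widehat F_p(\pi)$ in terms of the occupancy measures: $\widehat F_p(\pi) = \sum_{a}\mathbf x^a(\pi)\widehat{\mathbf h}^a = \sum_a \mathbf x^a(\pi)\mathbf h^a - \sum_a \mathbf x^a(\pi)(\mathbf I - \beta\mathbf P^a)\mathbf F(S^1)$. The first term is $F_p(\pi)$. For the second term, the occupancy measures satisfy the flow-balance constraints of $(L_\lambda(p))$ in \eqref{eq:plpmf}, namely $\sum_a \mathbf x^a(\pi)(\mathbf I - \beta\mathbf P^a) = \mathbf p$; hence $\sum_a \mathbf x^a(\pi)(\mathbf I - \beta\mathbf P^a)\mathbf F(S^1) = \mathbf p\,\mathbf F(S^1) = F_p(S^1)$. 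This yields $\widehat F_p(\pi) = F_p(\pi) - F_p(S^1)$. (One may alternatively give a purely probabilistic telescoping argument: $\widehat h^{a(t)}_{s(t)} = h^{a(t)}_{s(t)} - F_{s(t)}(S^1) + \beta\sum_j p^{a(t)}_{s(t)j}F_j(S^1)$, multiply by $\beta^t$, take $\Ex_p^\pi$, sum over $t$, and telescope the last two terms; this is the same computation unpacked.)

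Parts (b) and (c) then follow with almost no work. For (b), apply the definition \eqref{eq:vfiaapS} to the modified costs: $\widehat{\mathbf f}^{a,a'}(S) = \widehat{\mathbf h}^a - \widehat{\mathbf h}^{a'} + \beta(\mathbf P^a - \mathbf P^{a'})\widehat{\mathbf F}(S)$. Now $\widehat{\mathbf h}^a - \widehat{\mathbf h}^{a'} = (\mathbf h^a - \mathbf h^{a'}) - (\mathbf P^{a'} - \mathbf P^a)\beta\mathbf F(S^1) \cdot(-1)$—more precisely $\widehat{\mathbf h}^a - \widehat{\mathbf h}^{a'} = \mathbf h^a - \mathbf h^{a'} - (\mathbf P^a - \mathbf P^{a'})(-\beta)\mathbf F(S^1)$, wait; cleanly, $\widehat{\mathbf h}^a - \widehat{\mathbf h}^{a'} = \mathbf h^a - \mathbf h^{a'} + \beta(\mathbf P^a - \mathbf P^{a'})\mathbf F(S^1)$. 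Meanwhile part (a) specialized to $\mathbf p = \mathbf e_i$ (the unit mass at $i$) and to stationary deterministic policies gives $\widehat{\mathbf F}(S) = \mathbf F(S) - \mathbf F(S^1)$, so $\beta(\mathbf P^a - \mathbf P^{a'})\widehat{\mathbf F}(S) = \beta(\mathbf P^a - \mathbf P^{a'})\mathbf F(S) - \beta(\mathbf P^a - \mathbf P^{a'})\mathbf F(S^1)$. Adding, the two $\mathbf F(S^1)$ contributions cancel and we recover exactly \eqref{eq:vfiaapS} for the original costs, i.e.\ $\widehat{\mathbf f}^{a,a'}(S) = \mathbf f^{a,a'}(S)$. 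Part (c) is then immediate from the definition \eqref{eq:lambdiaapS}: the marginal resource metric $g_i^{a,a'}(S)$ is unchanged since it does not involve holding costs, and the numerator $\widehat f_i^{a,a'}(S) = f_i^{a,a'}(S)$ by (b), so $\widehat m_i^{a,a'}(S) = m_i^{a,a'}(S)$ whenever the ratio is defined.

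**Main obstacle.** There is no serious obstacle; the proof is routine once one recognizes $(\mathbf I - \beta\mathbf P^A)^{-1}\mathbf h^A = \mathbf F(S^1)$. The only point requiring a little care is the justification that the occupancy measures $\mathbf x^a(\pi)$ are well-defined (finite) and satisfy the flow-balance identity for every admissible $\pi \in \Pi$, not just stationary deterministic ones—this is standard MDP theory (discounting guarantees $\sum_a \mathbf x^a(\pi)\mathbf 1 = 1/(1-\beta) < \infty$ and the balance equations hold by conditioning on the first transition), and can be cited from \cite{put94}; alternatively the telescoping argument sketched above avoids occupancy measures entirely and handles general $\pi$ directly.
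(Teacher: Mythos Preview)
Your proposal is correct and follows essentially the same approach as the paper: for (a), both arguments use the occupancy-measure flow-balance identity $\sum_a \mathbf{x}_p^a(\pi)(\mathbf{I}-\beta\mathbf{P}^a)=\mathbf{p}$ together with $\mathbf{F}(S^1)=(\mathbf{I}-\beta\mathbf{P}^A)^{-1}\mathbf{h}^A$, and (b), (c) are derived identically by substituting $\widehat{\mathbf{F}}(S)=\mathbf{F}(S)-\mathbf{F}(S^1)$ into the definition of $\widehat{\mathbf{f}}^{a,a'}(S)$ and cancelling. If anything, your part (a) is a touch more streamlined than the paper's, which first isolates the $a=A$ summand before invoking flow balance, whereas you apply it directly to $\sum_a \mathbf{x}^a(\pi)(\mathbf{I}-\beta\mathbf{P}^a)\mathbf{F}(S^1)$.
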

\begin{proof}
(a) The measures $\mathbf{x}_p^{a}(\pi) = (x_{pj}^{a}(\pi))_{j \in \mathcal{N}}$, viewed as row vectors,  satisfy the constraints in (\ref{eq:plpmf}):
\begin{equation}
\label{eq:LPconstrx}
\sum_{a =0}^A \mathbf{x}_p^a(\pi) (\mathbf{I} - \beta \mathbf{P}^a) = \mathbf{p}.
\end{equation}

Hence, since the matrices $\mathbf{I} - \beta \mathbf{P}^a$ are invertible, we can write
\[
\mathbf{x}_p^A(\pi) + \sum_{a =0}^{A-1} \mathbf{x}_p^a(\pi) (\mathbf{I} - \beta \mathbf{P}^a) (\mathbf{I} - \beta \mathbf{P}^A)^{-1} = \mathbf{p} (\mathbf{I} - \beta \mathbf{P}^A)^{-1}.
\]

We thus obtain
\begin{align*}
F_p(\pi) & = 
\sum_{a =0}^A \mathbf{x}_p^a(\pi) \mathbf{h}^a   = 
\sum_{a =0}^{A-1} \mathbf{x}_p^a(\pi) \mathbf{h}^a + \mathbf{x}_p^A(\pi) \mathbf{h}^A   \\ &
= \mathbf{p} (\mathbf{I} - \beta \mathbf{P}^A)^{-1} \mathbf{h}^A    +  \sum_{a =0}^{A-1} \mathbf{x}_p^a(\pi) \big[\mathbf{h}^a  - (\mathbf{I} - \beta \mathbf{P}^a) (\mathbf{I} - \beta \mathbf{P}^A)^{-1}  \mathbf{h}^A \big] \\
& = \mathbf{p} (\mathbf{I} - \beta \mathbf{P}^A)^{-1} \mathbf{h}^A  + \sum_{a =0}^{A-1} \mathbf{x}_p^a(\pi) \widehat{\mathbf{h}}^a    \\
& = F_p(S^1) + \widehat{F}_p(\pi),
\end{align*}
where the last line is obtained from the previous one by using Equation~(\ref{eq:FSaSeq}), which yields  
\begin{equation}
\label{eq:FS1hA}
\mathbf{F}(S^1) =  (\mathbf{I} - \beta \mathbf{P}^A)^{-1} \mathbf{h}^A.
\end{equation}

(b) We have, using (\ref{eq:vfiaapS}), (\ref{eq:hatha}), and part (a), 
\begin{align*}
\widehat{\mathbf{f}}^{a, a'}(S) & \triangleq  \widehat{\mathbf{h}}^{a} - \widehat{\mathbf{h}}^{a'} + 
\beta (\mathbf{P}^{a} - \mathbf{P}^{a'}) \widehat{\mathbf{F}}(S) \\
& = \mathbf{h}^a - \mathbf{h}^{a'} + \beta (\mathbf{P}^a - \mathbf{P}^{a'}) (\mathbf{I} - \beta \mathbf{P}^A)^{-1}  \mathbf{h}^A + 
\beta (\mathbf{P}^{a} - \mathbf{P}^{a'}) (\mathbf{F}(S) -  \mathbf{F}(S^1))  \\
& = \mathbf{h}^a - \mathbf{h}^{a'}  + 
\beta (\mathbf{P}^{a} - \mathbf{P}^{a'}) \mathbf{F}(S)  \\
& = \mathbf{f}^{a, a'}(S).
\end{align*}

(c) This part follows directly from part (b) and (\ref{eq:lambdiaapS}), since 
$\widehat{m}_i^{a, a'}(S)  \triangleq \widehat{f}_i^{a, a'}(S)/g_i^{a, a'}(S) = 
f_i^{a, a'}(S)/g_i^{a, a'}(S) = m_i^{a, a'}(S).$
\end{proof}

\begin{corollary}
\label{cor:fiam1as1hiam1} For any state $j,$
\begin{itemize}
\item[(a)] $f_j^{a-1, a}(S^1) = \widehat{h}_j^{a-1} - \widehat{h}_j^{a}, \enspace a \geqslant 1;$
\item[(b)] $m_j^{a, A}(S^1) = \widehat{h}_j^{a} / g_j^{a, A}(S^1), \enspace a < A.$
\end{itemize}
\end{corollary}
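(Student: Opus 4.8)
The plan is to deduce both identities directly from Lemma~\ref{lma:relhhhat}, together with the two facts $\widehat{\mathbf{h}}^A = \mathbf{0}$ (Equation~(\ref{eq:haththetaazero})) and $\widehat{\mathbf{F}}(S^1) = \mathbf{0}$. The latter follows at once from Lemma~\ref{lma:relhhhat}(a) applied to the policy $\pi = S^1$: then $\widehat{F}_p(S^1) = F_p(S^1) - F_p(S^1) = 0$ for every initial distribution $p$, hence $\widehat{F}_i(S^1) = 0$ for all $i$. (This is also consistent with the closed form $\widehat{\mathbf{F}}(S^1) = (\mathbf{I} - \beta \mathbf{P}^A)^{-1}\widehat{\mathbf{h}}^A = \mathbf{0}$ obtained from the analogue of (\ref{eq:FS1hA}) for the modified model and (\ref{eq:haththetaazero}).)

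For part (a): first invoke Lemma~\ref{lma:relhhhat}(b) to write $f_j^{a-1,a}(S^1) = \widehat{f}_j^{a-1,a}(S^1)$. Then expand $\widehat{f}_j^{a-1,a}(S^1)$ via the defining identity (\ref{eq:fiaapS}) for the modified model, namely $\widehat{f}_j^{a-1,a}(S^1) = \widehat{h}_j^{a-1} - \widehat{h}_j^{a} + \beta \sum_{k} (p_{jk}^{a-1} - p_{jk}^{a}) \widehat{F}_k(S^1)$, and drop the final sum since $\widehat{\mathbf{F}}(S^1) = \mathbf{0}$. This yields $f_j^{a-1,a}(S^1) = \widehat{h}_j^{a-1} - \widehat{h}_j^{a}$.

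For part (b): by the definition (\ref{eq:lambdiaapS}), $m_j^{a,A}(S^1) = f_j^{a,A}(S^1)/g_j^{a,A}(S^1)$, so it suffices to show $f_j^{a,A}(S^1) = \widehat{h}_j^{a}$. One route is the same argument as in (a): Lemma~\ref{lma:relhhhat}(b) gives $f_j^{a,A}(S^1) = \widehat{f}_j^{a,A}(S^1) = \widehat{h}_j^{a} - \widehat{h}_j^{A} + \beta \sum_k (p_{jk}^a - p_{jk}^A)\widehat{F}_k(S^1) = \widehat{h}_j^{a}$, using $\widehat{h}_j^A = 0$ and $\widehat{\mathbf{F}}(S^1) = \mathbf{0}$. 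Alternatively, telescope over one-step gear shifts: $f_j^{a,A}(S^1) = \sum_{b=a+1}^{A} f_j^{b-1,b}(S^1) = \sum_{b=a+1}^{A} (\widehat{h}_j^{b-1} - \widehat{h}_j^{b}) = \widehat{h}_j^{a} - \widehat{h}_j^{A} = \widehat{h}_j^{a}$ by part (a) and (\ref{eq:haththetaazero}). Dividing by $g_j^{a,A}(S^1)$ gives the claim.

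There is no substantive obstacle; this is a short consequence of Lemma~\ref{lma:relhhhat}. The only point requiring a moment's care is establishing $\widehat{\mathbf{F}}(S^1) = \mathbf{0}$, i.e.\ recognizing that Lemma~\ref{lma:relhhhat}(a) is to be applied to the specific policy $\pi = S^1$, since $F_p(S^1)$ is precisely the reference level subtracted off in the definition of $\widehat{\mathbf{h}}^a$.
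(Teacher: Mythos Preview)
Your proposal is correct and follows essentially the same route as the paper: invoke Lemma~\ref{lma:relhhhat}(b) to pass to $\widehat{f}$, expand via (\ref{eq:vfiaapS}), and use $\widehat{\mathbf{F}}(S^1)=\mathbf{0}$ together with $\widehat{\mathbf{h}}^A=\mathbf{0}$. The only minor difference is that the paper justifies $\widehat{\mathbf{F}}(S^1)=\mathbf{0}$ by citing the analogue of (\ref{eq:FS1hA}) combined with (\ref{eq:haththetaazero}), whereas your primary justification via Lemma~\ref{lma:relhhhat}(a) with $\pi=S^1$ is arguably cleaner; you also mention the paper's route as an alternative, so the two proofs are effectively identical.
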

\begin{proof} (a) We have
\[
\mathbf{f}^{a-1, a}(S^1) = \widehat{\mathbf{f}}^{a-1, a}(S^1) \triangleq \widehat{\mathbf{h}}^{a-1} - \widehat{\mathbf{h}}^{a}  + 
\beta (\mathbf{P}^{a-1} - \mathbf{P}^{a}) \widehat{\mathbf{F}}(S^1) = \widehat{\mathbf{h}}^{a-1} - \widehat{\mathbf{h}}^{a},
\]
where we have used in turn Lemma \ref{lma:relhhhat} (b), (\ref{eq:vfiaapS}), (\ref{eq:haththetaazero}), and (\ref{eq:FS1hA}). 

(b) This part follows from (a), since $m_j^{a, A}(S^1) = f_{j}^{a, A}(S^1)/g_{j}^{a, A}(S^1) = \widehat{h}_{j}^{a}/g_{j}^{a, A}(S^1)$.
\end{proof}

Denote by $(\widehat{P}_\lambda)$ the modified $\lambda$-price problem where the $\mathbf{h}^a$ in problem $(P_\lambda)$ are replaced by $\widehat{\mathbf{h}}^a$. Write as $\widehat{V}_p(\lambda, \pi)$ the project cost metric for the modified model.

\begin{lemma}
\label{lma:equivPlhatPl}
Problems $(P_\lambda)$ and $(\widehat{P}_\lambda)$ are equivalent, since 
$\widehat{V}_p(\lambda, \pi) = V_p(\lambda, \pi) - F_p(S^1).$
\end{lemma}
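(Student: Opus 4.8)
The claim $\widehat{V}_p(\lambda,\pi)=V_p(\lambda,\pi)-F_p(S^1)$ is essentially a bookkeeping consequence of the decomposition of the cost metric. The plan is to start from the identity $V_p(\lambda,\pi)=F_p(\pi)+\lambda G_p(\pi)$ established in Section~\ref{s:pmetr}, and to observe that the modified problem $(\widehat P_\lambda)$ differs from $(P_\lambda)$ only in the holding costs $\mathbf h^a$, which are replaced by $\widehat{\mathbf h}^a$; the resource consumptions $\boldsymbol q^a$, the transition probabilities $\mathbf P^a$, the discount factor $\beta$, and the price $\lambda$ are all left untouched. Hence the resource metric is unchanged, $\widehat G_p(\pi)=G_p(\pi)$, and only the holding-cost metric needs attention.

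First I would write $\widehat V_p(\lambda,\pi)=\widehat F_p(\pi)+\lambda\,\widehat G_p(\pi)$, the analogue of the decomposition for the modified model. Then I would invoke Lemma~\ref{lma:relhhhat}(a), which gives $\widehat F_p(\pi)=F_p(\pi)-F_p(S^1)$, and note $\widehat G_p(\pi)=G_p(\pi)$ since the resource data are identical in the two models. Substituting, $\widehat V_p(\lambda,\pi)=F_p(\pi)-F_p(S^1)+\lambda G_p(\pi)=\bigl(F_p(\pi)+\lambda G_p(\pi)\bigr)-F_p(S^1)=V_p(\lambda,\pi)-F_p(S^1)$, which is the asserted identity. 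The equivalence of $(P_\lambda)$ and $(\widehat P_\lambda)$ then follows immediately: since $F_p(S^1)$ is a constant independent of $\pi$, minimizing $\widehat V_p(\lambda,\pi)$ over $\pi\in\Pi$ (or equivalently over $\Pi^{\textup{SD}}$) yields the same set of optimal policies as minimizing $V_p(\lambda,\pi)$, and the two optimal cost functions differ by the same additive constant; in particular $S^*(\lambda)$ is $\lambda$-optimal for $(P_\lambda)$ iff it is $\lambda$-optimal for $(\widehat P_\lambda)$.

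There is no serious obstacle here; the only thing worth being careful about is making explicit that $\widehat G_p(\pi)=G_p(\pi)$ — i.e., that the transformation~\eqref{eq:hatha} touches only holding costs and nothing else — and that the argument of Lemma~\ref{lma:relhhhat}(a), which was stated via the occupancy-measure representation $F_p(\pi)=\sum_a\mathbf x_p^a(\pi)\mathbf h^a$, applies verbatim to any admissible $\pi\in\Pi$ since every such policy induces occupancy measures satisfying the LO constraints~\eqref{eq:LPconstrx}. Once those two points are noted, the proof is a one-line substitution.
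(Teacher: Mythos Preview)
Your proposal is correct and follows essentially the same route as the paper's own proof: decompose $\widehat V_p(\lambda,\pi)=\widehat F_p(\pi)+\lambda G_p(\pi)$, invoke Lemma~\ref{lma:relhhhat}(a) to replace $\widehat F_p(\pi)$ by $F_p(\pi)-F_p(S^1)$, and recombine. Your added remarks that $\widehat G_p(\pi)=G_p(\pi)$ and that the constant shift preserves the set of optimal policies are helpful clarifications but do not deviate from the paper's argument.
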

\begin{proof}
We have, using Lemma \ref{lma:relhhhat} (a),
\[
\widehat{V}_p(\lambda, \pi) = \widehat{F}_p(\pi) + \lambda G_p(\pi) = V_p(\lambda, \pi) - F_p(S^1).
\]
\end{proof}

\section{Relations between Performance Metrics}
\label{s:dperfmetr}
This section presents relations between performance metrics that we will need to prove the verification theorem.
We start with a result giving decomposition identities that relate the metrics 
$F_p(\pi)$ and $G_p(\pi)$ for an admissible policy $\pi$ to the metrics $F_p(S)$ and $G_p(S)$ under a particular stationary deterministic policy $S$,
where $p$ is the initial-state distribution.
The resulting \emph{decomposition identities} involve marginal cost metrics $f_{j}^{a, a'}(S)$ and marginal resource metrics $g_{j}^{a', a}(S)$.

\begin{lemma}[Performance metrics decomposition]
\label{lma:declaws}
For any admissible policy $\pi$ and stationary deterministic policy $S$:
\begin{itemize}
\item[(a)] $
F_p(S)  + \sum_{a < a'} \sum_{j \in S_{a'}} f_{j}^{a, a'}(S) x_{pj}^{a}(\pi)  = 
F_p(\pi) + \sum_{a' < a} \sum_{j \in S_{a'}} f_{j}^{a', a}(S) x_{pj}^{a}(\pi);
$
\item[(b)] $
G_p(\pi)  + \sum_{a < a'} \sum_{j \in S_{a'}} g_{j}^{a, a'}(S) x_{pj}^a(\pi)  = 
G_p(S) + \sum_{a' < a} \sum_{j \in S_{a'}} g_{j}^{a', a}(S) x_{pj}^a(\pi).
$
\end{itemize}
\end{lemma}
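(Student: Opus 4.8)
The plan is to prove part (a); part (b) follows by an entirely symmetric computation with the roles of cost and resource metrics exchanged (and with $\mathbf{g}$ replacing $\mathbf{f}$, tracking the sign conventions in (\ref{eq:fiaapS}) and (\ref{eq:giaapS})). The natural starting point is the LO occupancy-measure identity (\ref{eq:LPconstrx}), namely $\sum_{a \in \mathcal{A}} \mathbf{x}_p^a(\pi)(\mathbf{I} - \beta \mathbf{P}^a) = \mathbf{p}$, valid for every admissible $\pi$. First I would dot both sides of this identity against the vector $\mathbf{F}(S)$ to obtain $\sum_{a \in \mathcal{A}} \mathbf{x}_p^a(\pi)(\mathbf{I} - \beta \mathbf{P}^a)\mathbf{F}(S) = \mathbf{p}\,\mathbf{F}(S) = F_p(S)$. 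On the other hand, the definition $F_p(\pi) = \sum_{a \in \mathcal{A}} \mathbf{x}_p^a(\pi)\mathbf{h}^a$ gives the ``raw'' cost metric. So the whole proof reduces to understanding, for each action $a$, the quantity $\mathbf{x}_p^a(\pi)\big[(\mathbf{I} - \beta \mathbf{P}^a)\mathbf{F}(S) - \mathbf{h}^a\big]$.

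The key step is then to invoke Lemma \ref{lma:civisrel}(a), which evaluates exactly this vector: $(\mathbf{I} - \beta \mathbf{P}^a)\mathbf{F}(S) - \mathbf{h}^a$ has components equal to $f_j^{a', a}(S)$ for $j \in S_{a'}$ with $a' \neq a$, and $0$ for $j \in S_a$. Substituting this into the dotted identity, and splitting the sum over $j$ according to which block $S_{a'}$ of the partition $S$ the state $j$ lies in, I get
\[
F_p(S) - F_p(\pi) = \sum_{a \in \mathcal{A}} \sum_{a' \neq a} \sum_{j \in S_{a'}} f_j^{a', a}(S)\, x_{pj}^a(\pi).
\]
Now I split the double sum over ordered pairs $(a, a')$ with $a' \neq a$ into the two cases $a' < a$ and $a < a'$, and move the $a < a'$ part to the left-hand side. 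This yields precisely
\[
F_p(S) + \sum_{a < a'} \sum_{j \in S_{a'}} f_j^{a', a}(S)\, x_{pj}^a(\pi) \cdot(-1)\cdots
\]
— here I need to be careful with signs: the term that migrates is $\sum_{a < a'}\sum_{j \in S_{a'}} f_j^{a', a}(S) x_{pj}^a(\pi)$, and by the antisymmetry $f_j^{a', a}(S) = -f_j^{a, a'}(S)$ (immediate from (\ref{eq:fiaapS})), moving it across the equals sign turns it into $+\sum_{a < a'}\sum_{j \in S_{a'}} f_j^{a, a'}(S) x_{pj}^a(\pi)$ on the left, which matches the claimed statement with the $a' < a$ sum remaining on the right.

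The main obstacle — really the only place care is needed — is bookkeeping the index ranges and the sign flips: keeping straight that in the summand $f_j^{a', a}(S)$ the ``outer'' action $a$ is the one tagging the occupancy measure $x_{pj}^a(\pi)$ (i.e., the action $\pi$ actually takes at $j$) while $a'$ is the action the reference policy $S$ prescribes at $j$ (since $j \in S_{a'}$), and then correctly reindexing when separating $a' < a$ from $a < a'$ and using antisymmetry of $f$ (resp. $g$) to land on the exact form stated. There is no analytic difficulty; it is a linear-algebraic identity obtained by pairing the flow-conservation constraints with the value vector and then applying Lemma \ref{lma:civisrel}. For part (b) the same scheme applies verbatim, pairing (\ref{eq:LPconstrx}) with $\mathbf{G}(S)$ and using Lemma \ref{lma:civisrel}(b), with the orientation of the marginal resource metric (\ref{eq:giaapS}) accounting for why $G_p(\pi)$ rather than $G_p(S)$ ends up carrying the sign that places it on the left-hand side.
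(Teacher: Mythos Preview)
Your proposal is correct and follows essentially the same route as the paper's own proof: both start from the occupancy-measure identity (\ref{eq:LPconstrx}), pair it with $\mathbf{F}(S)$, invoke Lemma~\ref{lma:civisrel}(a) to identify $(\mathbf{I}-\beta\mathbf{P}^a)\mathbf{F}(S)-\mathbf{h}^a$ blockwise, and then split the resulting sum over $a'\neq a$ into the cases $a'<a$ and $a<a'$ using the antisymmetry $f_j^{a',a}(S)=-f_j^{a,a'}(S)$. Your remark that part~(b) follows symmetrically via Lemma~\ref{lma:civisrel}(b) is exactly how the paper handles it as well.
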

\begin{proof}
(a) 
We can write, using in turn (\ref{eq:LPconstrx}), Lemma \ref{lma:civisrel} (a), and $f_j^{a', a}(S) = - f_j^{a, a'}(S)$,
\begin{align*}
0 & = \bigg[\sum_{a} \mathbf{x}_p^a(\pi) (\mathbf{I} - \beta \mathbf{P}^a) - \mathbf{p}\bigg] \mathbf{F}(S)  = 
\sum_{a} \mathbf{x}_p^a(\pi) (\mathbf{I} - \beta \mathbf{P}^a) \mathbf{F}(S) - \mathbf{p}\mathbf{F}(S) \\
& =  
\sum_{a} \mathbf{x}_p^a(\pi) \big[(\mathbf{I} - \beta \mathbf{P}^a) \mathbf{F}(S) - \mathbf{h}^a\big] - \mathbf{p} \mathbf{F}(S) + 
\sum_{a} \mathbf{x}_p^a(\pi) \mathbf{h}^a \\
& = 
\sum_{a} \mathbf{x}_p^a(\pi) \big[(\mathbf{I} - \beta \mathbf{P}^a) \mathbf{F}(S) - \mathbf{h}^a\big] - F_p(S) + 
F_p(\pi) \\
& = 
\sum_{a' \neq a} \mathbf{x}_{p S_{a'}}^a(\pi) \mathbf{f}_{S_{a'}}^{a', a}(S) - F_p(S) + 
F_p(\pi)  \\
& = 
\sum_{a' > a} \mathbf{x}_{p S_{a'}}^a(\pi) \mathbf{f}_{S_{a'}}^{a', a}(S) + \sum_{a' < a} \mathbf{x}_{p S_{a'}}^a(\pi) \mathbf{f}_{S_{a'}}^{a', a}(S)  - F_p(S) + 
F_p(\pi) \\
& = 
\sum_{a' < a} \mathbf{x}_{p S_{a'}}^a(\pi) \mathbf{f}_{S_{a'}}^{a', a}(S) -\sum_{a' > a} \mathbf{x}_{p S_{a'}}^a(\pi) \mathbf{f}_{S_{a'}}^{a, a'}(S)  - F_p(S) + 
F_p(\pi) \\
& = \sum_{a' < a} \sum_{j \in S_{a'}} x_{p j}^a(\pi) f_{j}^{a', a}(S) -\sum_{a' > a} \sum_{j \in S_{a'}} x_{p j}^a(\pi) f_{j}^{a, a'}(S)  - F_p(S) + 
F_p(\pi).
\end{align*}

Part (b) follows similarly as part (a).
\end{proof}

The following result draws on the above to relate metrics $F_p(S)$ and $F_p(\mathcal{T}_j^{a, a'} S)$ to $G_p(S)$ and $G_p(\mathcal{T}_j^{a, a'} S)$, respectively, which clarifies the interpretation of marginal metrics $f_{j}^{a, a'}(S)$ and $g_{j}^{a, a'}(S)$. Recall that $\mathcal{T}_j^{a, a'} S$ is the modification of policy $S$ that results 
by shifting gear in state $j$ from $a$ to $a'$.

\begin{lemma}
\label{lma:FfGgkl} For any actions $a \neq a'$ and state $j \in S_a$:
\begin{itemize}
\item[(a)] $F_p(S) =   
F_p(\mathcal{T}_j^{a, a'} S) + f_{j}^{a, a'}(S) x_{pj}^{a'}(\mathcal{T}_j^{a, a'} S);$
\item[(b)] $F_p(\mathcal{T}_j^{a, a'} S) =   
F_p(S) + f_{j}^{a', a}(\mathcal{T}_j^{a, a'} S) x_{pj}^{a}(S);$
\item[(c)] $G_p({\mathcal{T}_j^{a, a'} S}) = G_p(S) + g_{j}^{a, a'}(S) x_{pj}^{a'}(\mathcal{T}_j^{a, a'} S);$
\item[(d)] $G_p(S) = G_p(\mathcal{T}_j^{a, a'} S) + g_{j}^{a', a}(\mathcal{T}_j^{a, a'} S) x_{pj}^{a}(S).$
\end{itemize}
\end{lemma}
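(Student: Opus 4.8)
The plan is to derive all four identities from the performance-metrics decomposition of Lemma~\ref{lma:declaws} by substituting a stationary deterministic policy for the admissible policy $\pi$ and exploiting the sparsity of its occupancy vector; parts~(b) and~(d) will then follow from~(a) and~(c) by a symmetry argument.

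First I would prove part~(a). Fix $a \neq a'$ and $j \in S_a$, and write $\widehat{S} \triangleq \mathcal{T}_j^{a, a'} S$, so that $\widehat{S}$ agrees with $S$ in every state except $j$, where it selects gear $a'$ instead of $a$; in particular $j \in \widehat{S}_{a'}$. Apply Lemma~\ref{lma:declaws}(a) with the admissible policy taken to be $\widehat{S}$ and the structured policy taken to be $S$. Since $\widehat{S}$ is stationary deterministic, $x_{p i}^{b}(\widehat{S}) = 0$ unless $b$ is the gear prescribed by $\widehat{S}$ in state $i$. Hence, in both double sums of Lemma~\ref{lma:declaws}(a) the only index pair that can contribute is the one with $i = j$ and $b = a'$: for $i \neq j$, membership $i \in S_{b'}$ forces the gear prescribed by $\widehat{S}$ in $i$ to equal $b'$, which is incompatible with the strict inequality between the two action indices required in each sum. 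Lemma~\ref{lma:declaws}(a) therefore collapses to a single equation involving only $F_p(S)$, $F_p(\widehat{S})$, and one term $f_j^{a, a'}(S)\, x_{p j}^{a'}(\widehat{S})$ (or $f_j^{a', a}(S)\, x_{p j}^{a'}(\widehat{S})$), appearing on the right- or left-hand side according to whether $a < a'$ or $a' < a$. In the first case this is precisely part~(a); in the second case it rearranges to part~(a) after invoking the antisymmetry $f_j^{a', a}(S) = -f_j^{a, a'}(S)$, which is immediate from~(\ref{eq:fiaapS}).

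Part~(b) then follows by applying part~(a) to the policy $\widehat{S}$ with the gear in state $j$ shifted from $a'$ back to $a$ (valid since $j \in \widehat{S}_{a'}$) and noting $\mathcal{T}_j^{a', a} \widehat{S} = \mathcal{T}_j^{a', a} \mathcal{T}_j^{a, a'} S = S$; this gives $F_p(\widehat{S}) = F_p(S) + f_j^{a', a}(\widehat{S})\, x_{p j}^{a}(S)$, which is the claim. Parts~(c) and~(d) are obtained in exactly the same manner, now starting from Lemma~\ref{lma:declaws}(b) rather than~(a) and using the antisymmetry $g_j^{a', a}(S) = -g_j^{a, a'}(S)$ from~(\ref{eq:giaapS}).

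The only delicate point — and the main, if mild, obstacle — is the bookkeeping behind the collapse of the double sums: one must verify that every term with $i \neq j$ vanishes and that the single surviving term lands in the correct one of the two sums depending on the ordering of $a$ and $a'$, so that both orderings of the pair $(a, a')$ are handled uniformly through the sign flip. Everything else is a routine substitution.
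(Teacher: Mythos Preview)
Your proposal is correct and follows essentially the same approach as the paper: apply Lemma~\ref{lma:declaws} with $\pi = \mathcal{T}_j^{a,a'}S$, observe that all but one occupancy term vanish, and then derive (b) from (a) (and (d) from (c)) via the involution $\mathcal{T}_j^{a',a}\mathcal{T}_j^{a,a'}S = S$. The only cosmetic difference is that the paper first merges the two double sums into the single form $F_p(S) = F_p(\pi) + \sum_{l\neq k}\sum_{j'\in S_l} f_{j'}^{l,k}(S)\,x_{pj'}^k(\pi)$ (absorbing the antisymmetry upfront), thereby avoiding your explicit case split on the sign of $a'-a$; your more explicit bookkeeping is equally valid.
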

\begin{proof}
(a) Writing Lemma \ref{lma:declaws} (a) as 
\[
F_p(S) =   
F_p(\pi) + \sum_{l \neq k} \sum_{j' \in S_{l}} f_{j'}^{l, k}(S) x_{pj'}^k(\pi)   
\]
and taking $\pi = \mathcal{T}_j^{a, a'} S$ in the latter  expression, with $j \in S_a$, gives
\begin{equation}
\label{eq:FSFTjaapS}
F_p(S) =   
F_p(\mathcal{T}_j^{a, a'} S) + \sum_{l \neq k} \sum_{j' \in S_{l}} f_{j'}^{l, k}(S) x_{pj'}^k(\mathcal{T}_j^{a, a'} S).
\end{equation}

Now, on the one hand, for $l \neq a$, policy $\mathcal{T}_j^{a, a'} S$ selects gear $l$ in states $j' \in S_l$. Hence, for $k \neq l$ and such $j'$, 
$x_{p j'}^{k}(\mathcal{T}_j^{a, a'} S) = 0$.

On the other hand, for $l = a$, policy $\pi = \mathcal{T}_j^{a, a'} S$ selects gear $l = a$ in states $j' \in S_l - \{j\}$ and selects gear $a'$ in state $j' = j$. Hence, $x_{pj'}^{k}(\mathcal{T}_j^{a, a'} S) = 0$ for $j' \in S_l - \{j\}$, since $k \neq l = a$ and $x_{p j'}^{k}(\mathcal{T}_j^{a, a'} S) = 0$ for $j' = j$ if $k \neq a'$. 

Thus, the only  positive $x_{p j'}^k(\mathcal{T}_j^{a, a'} S)$ in (\ref{eq:FSFTjaapS}) can be $x_{pj}^{a'}(\mathcal{T}_j^{a, a'} S)$; hence, (\ref{eq:FSFTjaapS}) reduces to
\[
F_p(S) =   
F_p(\mathcal{T}_j^{a, a'} S) + f_{j}^{a, a'}(S) x_{pj}^{a'}(\mathcal{T}_j^{a, a'} S).
\]

(b) Let $S' = \mathcal{T}_j^{a, a'} S$ and note that $j \in S_{a'}'$ and $S = \mathcal{T}_j^{a', a} S'$. Hence, by part (a), 
\begin{align*}
F_p(\mathcal{T}_j^{a, a'} S)  & =  F_p(S') =  
F_p(\mathcal{T}_j^{a', a} S') + f_{j}^{a', a}(S') x_{pj}^{a}(\mathcal{T}_j^{a', a} S') \\
& = F_p(S) + f_{j}^{a', a}(\mathcal{T}_j^{a, a'} S) x_{pj}^{a}(S).
\end{align*}

Parts (c) and (d) follow similarly as (a) and (b).
\end{proof}

The following result, which follows easily from Lemma \ref{lma:FfGgkl} (c, d), clarifies the interpretation of PCL$(\mathcal{F})$-indexability condition (PCLI1) in Definition \ref{def:pclind}, as a natural monotonicity property of the resource metric $G_i(S)$.

\begin{proposition}
\label{pro:pcli1eq}
Condition \textup{(PCLI1)} is equivalent to the following: for $S \in \mathcal{F}, a' < a < a'', j \in S_a$,
\begin{itemize}
\item[(a)] 
$
G_i(\mathcal{T}_j^{a, a'} S) \leqslant G_i(S) \leqslant G_i({\mathcal{T}_j^{a, a''} S}), \enspace i \neq j, \enspace
$
and \enspace
$
G_j(\mathcal{T}_j^{a, a'} S) < G_j(S) < G_j({\mathcal{T}_j^{a, a''} S}).
$
\item[(b)] If $p$ has full support, 
$
G_p(\mathcal{T}_j^{a, a'} S) < G_p(S) < G_p({\mathcal{T}_j^{a, a''} S}).
$
\end{itemize}
\end{proposition}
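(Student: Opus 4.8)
The plan is to obtain everything by reading signs off the decomposition identities in Lemma~\ref{lma:FfGgkl}(c,d), which rewrite a difference of resource metrics under two neighbouring policies as the product of a marginal resource metric and a discounted state--action occupancy measure; the sign of such a difference is then governed by the sign of the marginal metric. The one extra ingredient needed is strict positivity of the relevant occupancy: if $j \in S_a$ and $a' \neq a$, then under $\mathcal{T}_j^{a, a'} S$ action $a'$ is selected in state $j$, so starting from $s(0) = j$ that action is taken at $t = 0$, whence $x_{jj}^{a'}(\mathcal{T}_j^{a, a'} S) \geqslant 1 > 0$; consequently, when $p$ has full support, $x_{pj}^{a'}(\mathcal{T}_j^{a, a'} S) \geqslant p_j\, x_{jj}^{a'}(\mathcal{T}_j^{a, a'} S) \geqslant p_j > 0$.

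For the ``only if'' direction, assume the model is PCL$(\mathcal{F})$-indexable, so by Remark~\ref{rem:tdabutd2}(i) we have $g_j^{a', a}(S) > 0$ for every $S \in \mathcal{F}$, every $j$, and all $a' < a$. Fix $S \in \mathcal{F}$, $j \in S_a$, and $a' < a < a''$. Applying Lemma~\ref{lma:FfGgkl}(c) to the downshift $\mathcal{T}_j^{a, a'} S$ gives
\[
G_p(\mathcal{T}_j^{a, a'} S) - G_p(S) = g_j^{a, a'}(S)\, x_{pj}^{a'}(\mathcal{T}_j^{a, a'} S),
\]
and since $g_j^{a, a'}(S) = -g_j^{a', a}(S) < 0$ while $x_{pj}^{a'}(\mathcal{T}_j^{a, a'} S) \geqslant 0$ (with the bound $\geqslant 1$ when $p$ is the point mass at $j$), we get $G_i(\mathcal{T}_j^{a, a'} S) \leqslant G_i(S)$ for $i \neq j$ and $G_j(\mathcal{T}_j^{a, a'} S) < G_j(S)$. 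The upshift inequalities follow the same way from Lemma~\ref{lma:FfGgkl}(c) applied to $\mathcal{T}_j^{a, a''} S$, now using $g_j^{a, a''}(S) > 0$. This proves (a); part (b) is then immediate, either by writing $G_p(\cdot) = \sum_i p_i G_i(\cdot)$ (the $i = j$ term is strict and $p_j > 0$) or directly from the displayed identity together with $x_{pj}^{a'}(\mathcal{T}_j^{a, a'} S) > 0$ for full-support $p$.

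For the ``if'' direction, assume (a) holds. Take $S \in \mathcal{F}$, an active action $a \geqslant 1$, and $j \in S_a$. The strict inequality in (a) with $a' = a - 1$ and $i = j$ reads $G_j(\mathcal{T}_j^{a, a-1} S) < G_j(S)$, which by Lemma~\ref{lma:FfGgkl}(c) equals $g_j^{a, a-1}(S)\, x_{jj}^{a-1}(\mathcal{T}_j^{a, a-1} S)$; since the occupancy is $\geqslant 1 > 0$, this forces $g_j^{a, a-1}(S) < 0$, i.e., $g_j^{a-1, a}(S) > 0$, and the telescoping identity of Remark~\ref{rem:tdabutd2}(i) then also gives $g_j^{a', a}(S) > 0$ for all $a' < a$; this is condition (PCLI1). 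The same conclusion follows from (b) alone, since for full-support $p$ the occupancy $x_{pj}^{a-1}(\mathcal{T}_j^{a, a-1} S)$ is likewise positive, and one may equivalently read it off Lemma~\ref{lma:FfGgkl}(d).

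I expect the only point requiring care is the sign bookkeeping: the metrics $f$ and $g$ in (\ref{eq:fiaapS})--(\ref{eq:giaapS}) are defined with opposite orderings of their superscripts, so one must keep straight which shift is a downshift and which an upshift, and one must verify that the pertinent occupancy measures are \emph{strictly} positive so that strict inequalities are preserved. This is precisely where the full-support hypothesis of (b) is used, namely to upgrade the pointwise-in-$i$ weak inequalities of (a) to a strict aggregate inequality. Beyond that the argument is a routine substitution into Lemma~\ref{lma:FfGgkl}, which is why the result follows easily once that lemma is available.
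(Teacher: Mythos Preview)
Your approach is exactly the one the paper indicates: read the sign of $G_p(\mathcal{T}_j^{a,a'}S)-G_p(S)$ off Lemma~\ref{lma:FfGgkl}(c,d) as the product of a marginal resource metric and a nonnegative occupancy, and use $x_{jj}^{a'}(\mathcal{T}_j^{a,a'}S)\geqslant 1$ for the strict part. Two small points are worth tightening. First, in the ``only if'' direction you should assume condition~(PCLI1) directly rather than full PCL$(\mathcal{F})$-indexability; the latter also includes (PCLI2), which is irrelevant here. Second, in your ``if'' direction you fix $j\in S_a$ and deduce $g_j^{a-1,a}(S)>0$, but (PCLI1) in Definition~\ref{def:pclind} demands this for \emph{every} $j\in\mathcal{N}$, not just $j\in S_a$; the hypotheses of the proposition, being stated only for $j\in S_a$ (since $\mathcal{T}_j^{a,a'}S$ is defined only then), do not obviously reach the remaining states, so your argument does not close that case. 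This gap is arguably inherent to how the proposition is phrased rather than to your method, but you should flag it explicitly rather than claim a full converse.
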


\begin{remark}
\label{re:pcli1eq} {Proposition \ref{pro:pcli1eq} yields the following intuitive interpretation of condition (PCLI1): it means that, for any $\mathcal{F}$-policy $S$, modifying $S$ by downshifting gears in one state results in a lower or equal value of the resource usage metric; and, conversely, modifying $S$ by upshifting gears in one state results in a higher or equal resource usage metric. When the initial state is drawn from a distribution with full support, downshifting gears leads to a strictly lower resource usage, and upshifting gears leads to a strictly higher resource usage.}
\end{remark}

The next result shows that, under (PCLI1), the increment $F_p(\mathcal{T}_j^{a, a'} S) - F_p(S)$ is proportional to $G_p(S) - G_p({\mathcal{T}_j^{a, a'} S})$, with proportionality constant $m_{j}^{a', a}(S)$, 
which is equal to $m_{j}^{a', a}(\mathcal{T}_j^{a, a'} S)$.

\begin{lemma}
\label{lma:P64MP02} Under condition \textup{(PCLI1)}, for any 
 actions $a' \neq a$ and state $j \in S_a,$
\begin{itemize}
\item[(a)]\small{$F_p(\mathcal{T}_j^{a, a'} S) - F_p(S) = m_{j}^{a', a}(S) (G_p(S) - G_p({\mathcal{T}_j^{a, a'} S})) = m_{j}^{a', a}(\mathcal{T}_j^{a, a'} S) (G_p(S) - G_p({\mathcal{T}_j^{a, a'} S}));$}
\item[(b)] $m_{j}^{a', a}(S) =  m_{j}^{a', a}(\mathcal{T}_j^{a, a'} S).$
\end{itemize}
\end{lemma}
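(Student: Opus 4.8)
The plan is to obtain both parts directly from Lemma~\ref{lma:FfGgkl}, which already decomposes the differences $F_p(\mathcal{T}_j^{a,a'} S)-F_p(S)$ and $G_p(S)-G_p(\mathcal{T}_j^{a,a'} S)$ into a marginal metric times a single discounted occupancy measure. Using the antisymmetries $f_j^{a,a'}(S)=-f_j^{a',a}(S)$ and $g_j^{a,a'}(S)=-g_j^{a',a}(S)$ that are immediate from \eqref{eq:fiaapS} and \eqref{eq:giaapS}, parts (a) and (c) of Lemma~\ref{lma:FfGgkl} read $F_p(\mathcal{T}_j^{a,a'} S)-F_p(S)=f_j^{a',a}(S)\,x_{pj}^{a'}(\mathcal{T}_j^{a,a'} S)$ and $G_p(S)-G_p(\mathcal{T}_j^{a,a'} S)=g_j^{a',a}(S)\,x_{pj}^{a'}(\mathcal{T}_j^{a,a'} S)$, while parts (b) and (d) read $F_p(\mathcal{T}_j^{a,a'} S)-F_p(S)=f_j^{a',a}(\mathcal{T}_j^{a,a'} S)\,x_{pj}^{a}(S)$ and $G_p(S)-G_p(\mathcal{T}_j^{a,a'} S)=g_j^{a',a}(\mathcal{T}_j^{a,a'} S)\,x_{pj}^{a}(S)$. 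Condition \textup{(PCLI1)}, via the telescoping identity $g_j^{a',a}(S)=\sum_{\ell=a'}^{a-1}g_j^{\ell,\ell+1}(S)$ noted in Remark~\ref{rem:tdabutd2}(i), gives $g_j^{a',a}(S)>0$ for $a'<a$ (and minus that sum for $a'>a$), so in any case $g_j^{a',a}(S)\neq 0$.

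For the first equality in (a) I divide the first pair of identities above, cancelling the common factor $x_{pj}^{a'}(\mathcal{T}_j^{a,a'} S)$ (legitimate since $g_j^{a',a}(S)\neq 0$), to get $F_p(\mathcal{T}_j^{a,a'} S)-F_p(S)=\bigl(f_j^{a',a}(S)/g_j^{a',a}(S)\bigr)\bigl(G_p(S)-G_p(\mathcal{T}_j^{a,a'} S)\bigr)=m_j^{a',a}(S)\bigl(G_p(S)-G_p(\mathcal{T}_j^{a,a'} S)\bigr)$. For part (b) I equate the two expressions for $F_p(\mathcal{T}_j^{a,a'} S)-F_p(S)$ and the two for $G_p(S)-G_p(\mathcal{T}_j^{a,a'} S)$, obtaining $f_j^{a',a}(S)\,x_{pj}^{a'}(\mathcal{T}_j^{a,a'} S)=f_j^{a',a}(\mathcal{T}_j^{a,a'} S)\,x_{pj}^{a}(S)$ and $g_j^{a',a}(S)\,x_{pj}^{a'}(\mathcal{T}_j^{a,a'} S)=g_j^{a',a}(\mathcal{T}_j^{a,a'} S)\,x_{pj}^{a}(S)$. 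Since $m_j^{a',a}(\cdot)$ does not depend on the initial-state distribution, I am free to take $p$ with full support; then $x_{pj}^{a'}(\mathcal{T}_j^{a,a'} S)\geqslant p_j>0$ and $x_{pj}^{a}(S)\geqslant p_j>0$, because state $j$ has initial probability $p_j$ and the policies $\mathcal{T}_j^{a,a'} S$ and $S$ select gears $a'$ and $a$, respectively, in state $j$. The second displayed relation together with $g_j^{a',a}(S)\neq 0$ then forces $g_j^{a',a}(\mathcal{T}_j^{a,a'} S)\neq 0$ (of the same sign), so that $m_j^{a',a}(\mathcal{T}_j^{a,a'} S)$ is well defined; dividing the first relation by the second yields $m_j^{a',a}(S)=m_j^{a',a}(\mathcal{T}_j^{a,a'} S)$, which is (b), and substituting this into the first equality of (a) gives its second equality.

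The only genuinely delicate point is well-definedness: the denominators $g_j^{a',a}(S)$ and $g_j^{a',a}(\mathcal{T}_j^{a,a'} S)$ must be nonzero for the ratios defining $m_j^{a',a}(\cdot)$ to make sense. For $S$ this is exactly what \textup{(PCLI1)} supplies (through the telescoping identity), but $\mathcal{T}_j^{a,a'} S$ need not lie in $\mathcal{F}$, so \textup{(PCLI1)} does not apply to it directly; the trick is to recover $g_j^{a',a}(\mathcal{T}_j^{a,a'} S)\neq 0$ a posteriori from the proportionality $g_j^{a',a}(S)\,x_{pj}^{a'}(\mathcal{T}_j^{a,a'} S)=g_j^{a',a}(\mathcal{T}_j^{a,a'} S)\,x_{pj}^{a}(S)$ and the strict positivity of the two occupancy measures secured by choosing a full-support initial distribution. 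Everything else is routine bookkeeping with the antisymmetry of $f$ and $g$ and the four identities of Lemma~\ref{lma:FfGgkl}.
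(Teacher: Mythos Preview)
Your proof is correct and follows essentially the same approach as the paper's: both arguments rest on Lemma~\ref{lma:FfGgkl}, eliminating the occupancy measure $x_{pj}^{a'}(\mathcal{T}_j^{a,a'}S)$ (resp.\ $x_{pj}^{a}(S)$) between the $F$- and $G$-identities to produce the two equalities in (a), with (b) then following by comparison. The only real difference is in bookkeeping: the paper first derives both equalities in (a) and then reads off (b) ``directly'' (tacitly assuming the common factor $G_p(S)-G_p(\mathcal{T}_j^{a,a'}S)$ is nonzero), whereas you establish (b) first from the proportionality relations and then substitute back; your explicit choice of a full-support $p$ to force $x_{pj}^{a'}(\mathcal{T}_j^{a,a'}S),\,x_{pj}^{a}(S)>0$, and your a posteriori recovery of $g_j^{a',a}(\mathcal{T}_j^{a,a'}S)\neq 0$, make the well-definedness step more explicit than in the paper.
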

\begin{proof}
(a) We have, using Lemma \ref{lma:FfGgkl} (a, c), 
\begin{align*}
F_p(\mathcal{T}_j^{a, a'} S) - F_p(S) & =  -f_{j}^{a, a'}(S) x_{pj}^{a'}(\mathcal{T}_j^{a, a'} S) = 
-f_{j}^{a, a'}(S) \frac{G_p({\mathcal{T}_j^{a, a'} S}) - G_p(S)}{g_{j}^{a, a'}(S)} \\
& = m_{j}^{a', a}(S) (G_p(S) - G_p({\mathcal{T}_j^{a, a'} S})).
\end{align*}

On the other hand, using Lemma \ref{lma:FfGgkl} (b, d) we obtain 
\begin{align*}
F_p(\mathcal{T}_j^{a, a'} S) - F_p(S) & =  f_{j}^{a', a}(\mathcal{T}_j^{a, a'} S) x_{pj}^{a}(S) = 
f_{j}^{a', a}(\mathcal{T}_j^{a, a'} S) \frac{G_p(S) - G_p(\mathcal{T}_j^{a, a'} S)}{g_{j}^{a', a}(\mathcal{T}_j^{a, a'} S)} \\
& = m_{j}^{a', a}(\mathcal{T}_j^{a, a'} S) (G_p(S) - G_p({\mathcal{T}_j^{a, a'} S})).
\end{align*}

(b) This part follows directly from part (a).
\end{proof}

The following result shows in its part (a) that, under condition (PCLI1), the increment $f_j^{a', a}(S) - f_j^{a', a}(\mathcal{T}_i^{\bar{a}, \bar{a}'} S)$ is proportional to $g_j^{a', a}(S) - g_j^{a', a}(\mathcal{T}_i^{\bar{a}, \bar{a}'} S)$, with the proportionality constant being $m_{i}^{\bar{a}', \bar{a}}(S)$.
Then, its part (b) draws on this result to obtain a relation between MP metrics that we use in Algorithm \ref{alg:tdabutd}.

\begin{lemma}
\label{lma:P64MP02more} Under condition \textup{(PCLI1)}, for any actions 
$a' \neq a$ and $\bar{a}' \neq \bar{a}$ with 
$\mathcal{T}_i^{\bar{a}, \bar{a}'} S \in \mathcal{F}$ and states $i \in S_{\bar{a}}$ and $j,$
\begin{itemize}
\item[(a)] $f_j^{a', a}(S) - f_j^{a', a}(\mathcal{T}_i^{\bar{a}, \bar{a}'} S) = m_{i}^{\bar{a}', \bar{a}}(S) (g_j^{a', a}(S) - g_j^{a', a}(\mathcal{T}_i^{\bar{a}, \bar{a}'} S));$
\item[(b)] $g_j^{a', a}(\mathcal{T}_i^{\bar{a}, \bar{a}'} S) \big(m_j^{a', a}(\mathcal{T}_i^{\bar{a}, \bar{a}'} S)
 - m_{i}^{\bar{a}', \bar{a}}(S)\big) = 
 g_j^{a', a}(S) \big(m_j^{a', a}(S)  - m_{i}^{\bar{a}', \bar{a}}(S)\big).$
\end{itemize}
\end{lemma}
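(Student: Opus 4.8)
The plan is to establish part (a) by a short linear-algebra computation, and then obtain part (b) by a purely algebraic rearrangement that unfolds the definition of the MP metric. Throughout write $\widehat S \triangleq \mathcal{T}_i^{\bar{a}, \bar{a}'} S$, so that $i \in S_{\bar{a}}$ and, by hypothesis, $\widehat S \in \mathcal{F}$.

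For part (a), the starting point is that the marginal metrics $\mathbf{f}^{a', a}(\cdot)$ and $\mathbf{g}^{a', a}(\cdot)$ depend on the underlying policy only through $\mathbf{F}(\cdot)$ and $\mathbf{G}(\cdot)$, respectively, via fixed affine maps: by (\ref{eq:vfiaapS}) and (\ref{eq:vgiaapS}) (with the superscripts transposed), $\mathbf{f}^{a', a}(S) = \mathbf{h}^{a'} - \mathbf{h}^{a} + \beta(\mathbf{P}^{a'} - \mathbf{P}^{a})\mathbf{F}(S)$ and $\mathbf{g}^{a', a}(S) = \boldsymbol{q}^{a} - \boldsymbol{q}^{a'} + \beta(\mathbf{P}^{a} - \mathbf{P}^{a'})\mathbf{G}(S)$. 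Subtracting the values at $S$ and $\widehat S$ cancels the $\mathbf{h}$- and $\boldsymbol q$-terms, leaving
\[
\mathbf{f}^{a', a}(S) - \mathbf{f}^{a', a}(\widehat S) = \beta(\mathbf{P}^{a'} - \mathbf{P}^{a})\big(\mathbf{F}(S) - \mathbf{F}(\widehat S)\big)
\]
and
\[
\mathbf{g}^{a', a}(S) - \mathbf{g}^{a', a}(\widehat S) = -\beta(\mathbf{P}^{a'} - \mathbf{P}^{a})\big(\mathbf{G}(S) - \mathbf{G}(\widehat S)\big).
\]
Next I would invoke Lemma \ref{lma:P64MP02}(a) with $(j,a,a')$ replaced by $(i,\bar a,\bar a')$ and with the initial distribution ranging over the point masses $\mathbf{e}_\ell$, $\ell\in\mathcal{N}$; this yields the componentwise, hence vector, colinearity $\mathbf{F}(S) - \mathbf{F}(\widehat S) = -\,m_i^{\bar a', \bar a}(S)\big(\mathbf{G}(S) - \mathbf{G}(\widehat S)\big)$ (this is where condition \textup{(PCLI1)} enters, as it underlies Lemma \ref{lma:P64MP02} and guarantees the relevant marginal resource metrics are nonzero). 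Substituting this into the first displayed identity and comparing with the second gives the vector identity $\mathbf{f}^{a', a}(S) - \mathbf{f}^{a', a}(\widehat S) = m_i^{\bar a', \bar a}(S)\big(\mathbf{g}^{a', a}(S) - \mathbf{g}^{a', a}(\widehat S)\big)$, whose $j$-th component is exactly part (a).

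For part (b), I would simply expand the two MP-metric terms on the left-hand side using the definition (\ref{eq:lambdiaapS}), i.e. $f_j^{a', a}(S) = m_j^{a', a}(S)\,g_j^{a', a}(S)$ and $f_j^{a', a}(\widehat S) = m_j^{a', a}(\widehat S)\,g_j^{a', a}(\widehat S)$ (legitimate since \textup{(PCLI1)} makes the marginal resource metrics of $\mathcal{F}$-policies strictly positive; in the algorithm's use $\bar a' = \bar a - 1$ and $S, \widehat S \in \mathcal{F}$). Part (a) then reads $m_j^{a', a}(S)\,g_j^{a', a}(S) - m_j^{a', a}(\widehat S)\,g_j^{a', a}(\widehat S) = m_i^{\bar a', \bar a}(S)\,g_j^{a', a}(S) - m_i^{\bar a', \bar a}(S)\,g_j^{a', a}(\widehat S)$, and collecting the $g_j^{a', a}(S)$ terms on one side and the $g_j^{a', a}(\widehat S)$ terms on the other regroups this precisely into $g_j^{a', a}(S)\big(m_j^{a', a}(S) - m_i^{\bar a', \bar a}(S)\big) = g_j^{a', a}(\widehat S)\big(m_j^{a', a}(\widehat S) - m_i^{\bar a', \bar a}(S)\big)$, as claimed.

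The main obstacle here is bookkeeping rather than conceptual: one must keep the sign conventions straight — the transposition of superscripts in $\mathbf{f}^{a', a}$, $\mathbf{g}^{a', a}$ relative to (\ref{eq:vfiaapS})--(\ref{eq:vgiaapS}), the opposite linear parts $\pm\beta(\mathbf{P}^{a'} - \mathbf{P}^{a})$ in the cost vs.\ resource metrics, and the direction of the shift operator $\mathcal{T}_i^{\bar a, \bar a'}$ — so that the single scalar $m_i^{\bar a', \bar a}(S)$ emerges with the correct sign in the colinearity step; once those are pinned down, everything follows by substitution. (Lemma \ref{lma:P64MP02}(b), giving $m_i^{\bar a', \bar a}(S) = m_i^{\bar a', \bar a}(\widehat S)$, is convenient to have in hand but is not strictly needed for the statement as written.)
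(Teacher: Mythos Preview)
Your proposal is correct and follows essentially the same route as the paper's proof: for (a) you subtract the vector identities (\ref{eq:vfiaapS})--(\ref{eq:vgiaapS}) and invoke Lemma~\ref{lma:P64MP02}(a) to obtain the colinearity $\mathbf{F}(S)-\mathbf{F}(\widehat S) = -m_i^{\bar a',\bar a}(S)(\mathbf{G}(S)-\mathbf{G}(\widehat S))$, exactly as the paper does; for (b) both arguments are the same algebraic rearrangement of (a) after substituting $f = m\cdot g$, with the paper choosing to divide through by $g_j^{a',a}(\widehat S)$ first while you work directly in product form.
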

\begin{proof}
(a) From (\ref{eq:vfiaapS}) and (\ref{eq:vgiaapS}), we obtain
\[
\mathbf{f}^{a', a}(S) - \mathbf{f}^{a', a}(\mathcal{T}_i^{\bar{a}, \bar{a}'} S) = \beta (\mathbf{P}^{a'} - \mathbf{P}^{a}\big) \big(\mathbf{F}(S) - \mathbf{F}(\mathcal{T}_i^{\bar{a}, \bar{a}'} S))
\]
and
\[
\mathbf{g}^{a', a}(S) - \mathbf{g}^{a', a}(\mathcal{T}_i^{\bar{a}, \bar{a}'} S) = \beta \big(\mathbf{P}^{a} - \mathbf{P}^{a'}\big) (\mathbf{G}(S) - \mathbf{G}(\mathcal{T}_i^{\bar{a}, \bar{a}'} S))
\]

Now, combining Lemma \ref{lma:P64MP02}(a) with this gives
\[
\mathbf{f}^{a', a}(S) - \mathbf{f}^{a', a}(\mathcal{T}_i^{\bar{a}, \bar{a}'} S) = m_{i}^{\bar{a}', \bar{a}}(S) (\mathbf{g}^{a', a}(S) - \mathbf{g}^{a', a}(\mathcal{T}_i^{\bar{a}, \bar{a}'} S)).
\]

(b) We can write
\begin{align*}
m_j^{a', a}(\mathcal{T}_i^{\bar{a}, \bar{a}'} S) & \triangleq 
\frac{f_j^{a', a}(\mathcal{T}_i^{\bar{a}, \bar{a}'} S)}{g_j^{a', a}(\mathcal{T}_i^{\bar{a}, \bar{a}'} S)} \\
 & = \frac{f_j^{a', a}(S)}{g_j^{a', a}(\mathcal{T}_i^{\bar{a}, \bar{a}'} S)} - m_{i}^{\bar{a}', \bar{a}}(S)  \bigg(\frac{g_j^{a', a}(S)}{g_j^{a', a}(\mathcal{T}_i^{\bar{a}, \bar{a}'} S)} - 1\bigg) \\
& = \frac{g_j^{a', a}(S)}{g_j^{a', a}(\mathcal{T}_i^{\bar{a}, \bar{a}'} S)} \frac{f_j^{a', a}(S)}{g_j^{a', a}(S)}  - m_{i}^{\bar{a}', \bar{a}}(S)  \bigg(\frac{g_j^{a', a}(S)}{g_j^{a', a}(\mathcal{T}_i^{\bar{a}, \bar{a}'} S)} - 1\bigg) \\
& = \frac{g_j^{a', a}(S)}{g_j^{a', a}(\mathcal{T}_i^{\bar{a}, \bar{a}'} S)} m_j^{a', a}(S)  - m_{i}^{\bar{a}', \bar{a}}(S)  \bigg(\frac{g_j^{a', a}(S)}{g_j^{a', a}(\mathcal{T}_i^{\bar{a}, \bar{a}'} S)} - 1\bigg) \\
& = m_{i}^{\bar{a}', \bar{a}}(S) + 
 \frac{g_j^{a', a}(S)}{g_j^{a', a}(\mathcal{T}_i^{\bar{a}, \bar{a}'} S)} \big(m_j^{a', a}(S)  - m_{i}^{\bar{a}', \bar{a}}(S)\big),
\end{align*}
where the second line is obtained by using part (a).
\end{proof}

\section{Analysis of the Output of Algorithm $\mathrm{DS}(\mathcal{F})$}
\label{s:aoadsf}
This section derives further relations between project performance metrics that will play the role of key tools for elucidating and analyzing the output of the downshift adaptive-greedy index algorithm $\mathrm{DS}(\mathcal{F})$ in Algorithm \ref{alg:tdabutd} and hence to prove the verification theorem.
Throughout this section, $\{(j_k, a_k), S^k, m_{j_k}^{*, a_k}\}_{k=1}^{K}$
is an output of algorithm $\mathrm{DS}(\mathcal{F})$.

We start with a result justifying recursive index update Formula (\ref{eq:recindcpt}) in the algorithm.
\begin{lemma}
\label{lma:recindcomp} Let condition \textup{(PCLI1)} hold.  
Then, for any state $j$, active action $a \geqslant 1$, and $k = 2, \ldots, K$, 
\begin{equation}
\label{eq:recindcomp}
m_j^{a-1, a}(S^{k}) = m_{j_{k-1}}^{*, a_{k-1}} + 
 \frac{g_j^{a-1, a}(S^{k-1})}{g_j^{a-1, a}(S^k)} (m_j^{a-1, a}(S^{k-1})  - m_{j_{k-1}}^{*, a_{k-1}}),
\end{equation}
or, equivalently,
\begin{equation}
\label{eq:eqrecindcomp}
m_j^{a-1, a}(S^{k-1})  = m_{j_{k-1}}^{*, a_{k-1}} + 
\frac{g_j^{a-1, a}(S^k)}{g_j^{a-1, a}(S^{k-1})} \big(m_j^{a-1, a}(S^{k}) - m_{j_{k-1}}^{*, a_{k-1}}\big).
\end{equation}
\end{lemma}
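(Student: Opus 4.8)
The plan is to derive both displayed identities directly from Lemma \ref{lma:P64MP02more}(b), specialized to the policies and indices generated by Algorithm \ref{alg:tdabutd}.

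First I would apply Lemma \ref{lma:P64MP02more}(b) with the substitutions $S := S^{k-1}$, $i := j_{k-1}$, $(\bar{a}, \bar{a}') := (a_{k-1}, a_{k-1}-1)$, and $(a', a) := (a-1, a)$. The hypotheses are satisfied: by construction of $\mathrm{DS}(\mathcal{F})$ one has $j_{k-1} \in S_{a_{k-1}}^{k-1}$ and $\mathcal{T}_{j_{k-1}}^{a_{k-1}, a_{k-1}-1} S^{k-1} = S^{k} \in \mathcal{F}$ (see Remark \ref{rem:tdabutd}(iii)). Since the algorithm sets $m_{j_{k-1}}^{*, a_{k-1}} = m_{j_{k-1}}^{a_{k-1}-1, a_{k-1}}(S^{k-1})$, the term $m_i^{\bar{a}', \bar{a}}(S)$ in Lemma \ref{lma:P64MP02more}(b) is precisely $m_{j_{k-1}}^{*, a_{k-1}}$. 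With these identifications, that lemma yields
\[
g_j^{a-1, a}(S^{k})\bigl(m_j^{a-1, a}(S^{k}) - m_{j_{k-1}}^{*, a_{k-1}}\bigr) = g_j^{a-1, a}(S^{k-1})\bigl(m_j^{a-1, a}(S^{k-1}) - m_{j_{k-1}}^{*, a_{k-1}}\bigr).
\]

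Next I would invoke condition \textup{(PCLI1)}, which (since $S^{k-1}, S^{k} \in \mathcal{F}$) guarantees $g_j^{a-1, a}(S^{k-1}) > 0$ and $g_j^{a-1, a}(S^{k}) > 0$; this also ensures that the MP metrics $m_j^{a-1, a}(S^{k-1})$ and $m_j^{a-1, a}(S^{k})$ are well defined. Dividing the displayed identity by $g_j^{a-1, a}(S^{k})$ and solving for $m_j^{a-1, a}(S^{k})$ gives \eqref{eq:recindcomp}; dividing instead by $g_j^{a-1, a}(S^{k-1})$ and solving for $m_j^{a-1, a}(S^{k-1})$ gives \eqref{eq:eqrecindcomp}. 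Their equivalence is immediate, as both follow from the same identity by dividing through by a strictly positive quantity.

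I expect no real obstacle here: the content of the lemma is essentially a rewriting of Lemma \ref{lma:P64MP02more}(b) in the notation of the algorithm, and the only points needing attention are the verification that the relevant policies lie in $\mathcal{F}$ (so that Lemma \ref{lma:P64MP02more} applies) and the appeal to \textup{(PCLI1)} to license the divisions.
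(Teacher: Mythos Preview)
Your proposal is correct and follows essentially the same approach as the paper: specialize Lemma~\ref{lma:P64MP02more}(b) with $S=S^{k-1}$, $i=j_{k-1}$, $\bar{a}=a_{k-1}$, $\bar{a}'=a_{k-1}-1$, $a'=a-1$, note that $\mathcal{T}_{j_{k-1}}^{a_{k-1},a_{k-1}-1}S^{k-1}=S^k\in\mathcal{F}$ and $m_{j_{k-1}}^{*,a_{k-1}}=m_{j_{k-1}}^{a_{k-1}-1,a_{k-1}}(S^{k-1})$, and then divide through using \textup{(PCLI1)}. Your write-up is in fact more careful than the paper's about verifying the hypotheses of Lemma~\ref{lma:P64MP02more} and the well-definedness of the divisions.
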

\begin{proof}
The result follows by noting that $m_{j_{k-1}}^{*, a_{k-1}} = m_{j_{k-1}}^{a_{k-1}-1, a_{k-1}}$ and 
taking $i = j_{k-1}$, $\bar{a}' = a_{k-1}-1$, $\bar{a} = a_{k-1}$, $S = S^{k-1}$, and $a' = a-1$ in Lemma
\ref{lma:P64MP02more}(b), since  
$\mathcal{T}_i^{\bar{a}, \bar{a}'} S = S^k \in \mathcal{F}$.
\end{proof}

The following result expresses, in its part (b), the MP metric $m_{j_l}^{a_l-1, a_l}(S^k)$, for \mbox{$k < l$}, as the sum of index $m_{j_k}^{*,  a_k}$ and a positive linear combination of the  index differences $m_{j_n}^{*, a_n}~-~m_{j_{n-1}}^{*, a_{n-1}}$ for $n=k+1, \ldots, l$.
Part (a) is a preliminary result needed to prove part (b).

\begin{lemma}
\label{lma:mjlalm1alsklprim} Under condition \textup{(PCLI1)},   the following holds for $1 \leqslant k < l \leqslant K$:
\begin{itemize}
\item[(a)] For $l' = k+1, \ldots, l,$
\begin{equation}
\label{eq:mjlalm1alsklprim}
\begin{split}
m_{j_l}^{a_l-1, a_l}(S^k) & = 
m_{j_k}^{*,  a_k} + \frac{1}{g_{j_l}^{a_l-1, a_l}(S^k)} \bigg[\sum_{n=k+1}^{l'-1} g_{j_l}^{a_l-1, a_l}(S^n) (m_{j_n}^{*, a_n}-m_{j_{n-1}}^{*, a_{n-1}}) \bigg. \\
& \qquad \bigg. + g_{j_l}^{a_l-1, a_l}(S^{l'}) \big(m_{j_l}^{a_l-1, a_l}(S^{l'}) - m_{j_{l'-1}}^{*, a_{l'-1}}\big)\bigg].
\end{split}
\end{equation}
\item[(b)] $f_{j_l}^{a_l-1, a_l}(S^k) = 
g_{j_l}^{a_l-1, a_l}(S^k) m_{j_k}^{*,  a_k} +  \sum_{n=k+1}^{l}  g_{j_l}^{a_l-1, a_l}(S^n) (m_{j_n}^{*, a_n}-m_{j_{n-1}}^{*, a_{n-1}})$, or, equivalently, 
\begin{equation}
\label{eq:2mjlalm1alsklprim}
m_{j_l}^{a_l-1, a_l}(S^k) = 
m_{j_k}^{*,  a_k} +  \sum_{n=k+1}^{l}  \frac{g_{j_l}^{a_l-1, a_l}(S^n)}{g_{j_l}^{a_l-1, a_l}(S^k)} (m_{j_n}^{*, a_n}-m_{j_{n-1}}^{*, a_{n-1}}).
\end{equation}
\end{itemize}
\end{lemma}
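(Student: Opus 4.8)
I would prove part~(a) by a finite induction on the auxiliary index $l'$, running upward from $l'=k+1$ to $l'=l$. The base case $l'=k+1$ is nothing but identity~(\ref{eq:eqrecindcomp}) of Lemma~\ref{lma:recindcomp} applied at step index $k+1$ (legitimate since $2\leqslant k+1\leqslant K$), which reads $m_{j_l}^{a_l-1,a_l}(S^k)=m_{j_k}^{*,a_k}+\frac{g_{j_l}^{a_l-1,a_l}(S^{k+1})}{g_{j_l}^{a_l-1,a_l}(S^k)}\bigl(m_{j_l}^{a_l-1,a_l}(S^{k+1})-m_{j_k}^{*,a_k}\bigr)$ and coincides with~(\ref{eq:mjlalm1alsklprim}) when the sum $\sum_{n=k+1}^{l'-1}$ is empty. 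Throughout, I would use condition~(PCLI1) together with $S^n\in\mathcal{F}$ for $n=1,\dots,K+1$ (Remark~\ref{rem:tdabutd}(iii)) to guarantee that every marginal resource metric $g_{j_l}^{a_l-1,a_l}(S^n)$ appearing as a denominator is strictly positive, so that all quantities are well defined and the displayed ratios may be freely manipulated.

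\textbf{Inductive step.} Assume~(\ref{eq:mjlalm1alsklprim}) holds at level $l'$ with $k+1\leqslant l'\leqslant l-1$. I would isolate its trailing term $g_{j_l}^{a_l-1,a_l}(S^{l'})\bigl(m_{j_l}^{a_l-1,a_l}(S^{l'})-m_{j_{l'-1}}^{*,a_{l'-1}}\bigr)$ and rewrite the factor $m_{j_l}^{a_l-1,a_l}(S^{l'})$ by invoking~(\ref{eq:eqrecindcomp}) at step index $l'+1$ (valid since $2\leqslant l'+1\leqslant l\leqslant K$), namely $m_{j_l}^{a_l-1,a_l}(S^{l'})=m_{j_{l'}}^{*,a_{l'}}+\frac{g_{j_l}^{a_l-1,a_l}(S^{l'+1})}{g_{j_l}^{a_l-1,a_l}(S^{l'})}\bigl(m_{j_l}^{a_l-1,a_l}(S^{l'+1})-m_{j_{l'}}^{*,a_{l'}}\bigr)$. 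Substituting and cancelling the common factor $g_{j_l}^{a_l-1,a_l}(S^{l'})$ splits that trailing term into the single new summand $g_{j_l}^{a_l-1,a_l}(S^{l'})\bigl(m_{j_{l'}}^{*,a_{l'}}-m_{j_{l'-1}}^{*,a_{l'-1}}\bigr)$ — precisely the $n=l'$ term that pushes the sum's upper limit from $l'-1$ to $l'$ — plus a new trailing term $g_{j_l}^{a_l-1,a_l}(S^{l'+1})\bigl(m_{j_l}^{a_l-1,a_l}(S^{l'+1})-m_{j_{l'}}^{*,a_{l'}}\bigr)$ of exactly the shape demanded at level $l'+1$. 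This yields~(\ref{eq:mjlalm1alsklprim}) at level $l'+1$ and closes the induction.

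\textbf{Deriving (b).} Take $l'=l$ in part~(a): the trailing term becomes $g_{j_l}^{a_l-1,a_l}(S^{l})\bigl(m_{j_l}^{a_l-1,a_l}(S^{l})-m_{j_{l-1}}^{*,a_{l-1}}\bigr)$, and by the assignment performed at step $l$ of Algorithm~\ref{alg:tdabutd} one has $m_{j_l}^{*,a_l}=m_{j_l}^{a_l-1,a_l}(S^{l})$, so this term is exactly the $n=l$ term of $\sum_{n=k+1}^{l}$; absorbing it yields~(\ref{eq:2mjlalm1alsklprim}). Multiplying both sides of~(\ref{eq:2mjlalm1alsklprim}) by $g_{j_l}^{a_l-1,a_l}(S^k)$ and using the definition~(\ref{eq:lambdiaapS}), i.e.\ $m_{j_l}^{a_l-1,a_l}(S^k)\,g_{j_l}^{a_l-1,a_l}(S^k)=f_{j_l}^{a_l-1,a_l}(S^k)$, gives the first displayed identity of~(b). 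There is no deep obstacle here; the only point requiring care is keeping the step indices inside the admissible range $\{2,\dots,K\}$ when repeatedly applying Lemma~\ref{lma:recindcomp} — which is exactly why the statement carries the auxiliary index $l'$ rather than unwinding the whole chain at once — and checking that the telescoping reorganization produces precisely one new summand and one new trailing term with the correct $g$-weights, where~(PCLI1) is what licenses the divisions.
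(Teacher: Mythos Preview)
Your proposal is correct and follows essentially the same approach as the paper's own proof: induction on $l'$ from $k+1$ to $l$, with both the base case and the inductive step driven by the recursive identity~(\ref{eq:eqrecindcomp}) of Lemma~\ref{lma:recindcomp}, and part~(b) obtained as the terminal case $l'=l$ using $m_{j_l}^{a_l-1,a_l}(S^l)=m_{j_l}^{*,a_l}$. Your additional remarks on the role of~(PCLI1) in guaranteeing nonzero denominators and on the admissible range of the step indices are sound and make explicit what the paper leaves implicit.
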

\begin{proof}
(a) We prove the result by induction on $l' = k+1, \ldots, l$.
For $l' = k+1$, Equation~(\ref{eq:mjlalm1alsklprim}) holds because, by Equation~(\ref{eq:eqrecindcomp}) in Lemma \ref{lma:recindcomp}, we have
\begin{align*}
m_{j_l}^{a_l-1, a_l}(S^k) & = 
m_{j_k}^{*,  a_k} + \frac{g_{j_l}^{a_l-1, a_l}(S^{k+1})}{g_{j_l}^{a_l-1, a_l}(S^k)}  \big(m_{j_l}^{a_l-1, a_l}(S^{k+1}) - m_{j_{k}}^{*, a_{k}}\big).
\end{align*}

Suppose now that Equation~(\ref{eq:mjlalm1alsklprim}) holds for some $l'$ with $k < l' < l$.
We will use that, by Equation~(\ref{eq:eqrecindcomp}) in Lemma \ref{lma:recindcomp}, we have
\begin{align*}
m_{j_l}^{a_l-1, a_l}(S^{l'}) & = 
m_{j_{l'}}^{*, a_{l'}} + \frac{g_{j_l}^{a_l-1, a_l}(S^{l'+1})}{g_{j_l}^{a_l-1, a_l}(S^{l'})}  \big(m_{j_l}^{a_l-1, a_l}(S^{l'+1}) - m_{j_{l'}}^{*, a_{l'}}\big).
\end{align*}

Now, substituting the right-hand side of the latter identity for $m_{j_l}^{a_l-1, a_l}(S^{l'})$ in Equation~(\ref{eq:mjlalm1alsklprim}) gives
\begin{multline*}
m_{j_l}^{a_l-1, a_l}(S^k) = 
m_{j_k}^{*,  a_k} + \frac{1}{g_{j_l}^{a_l-1, a_l}(S^k)} \bigg[\sum_{n=k+1}^{l'-1} g_{j_l}^{a_l-1, a_l}(S^n) (m_{j_n}^{*, a_n}-m_{j_{n-1}}^{*, a_{n-1}}) \bigg. \\
 \bigg. + g_{j_l}^{a_l-1, a_l}(S^{l'}) \bigg(m_{j_{l'}}^{*, a_{l'}} + \frac{g_{j_l}^{a_l-1, a_l}(S^{l'+1})}{g_{j_l}^{a_l-1, a_l}(S^{l'})}  \big(m_{j_l}^{a_l-1, a_l}(S^{l'+1}) - m_{j_{l'}}^{*, a_{l'}}\big) - m_{j_{l'-1}}^{*, a_{l'-1}}\bigg)\bigg]  \\
= m_{j_k}^{*,  a_k} + \frac{1}{g_{j_l}^{a_l-1, a_l}(S^k)} \bigg[\sum_{n=k+1}^{l'} g_{j_l}^{a_l-1, a_l}(S^n) (m_{j_n}^{*, a_n}-m_{j_{n-1}}^{*, a_{n-1}}) \bigg. \\
 \bigg. +  g_{j_l}^{a_l-1, a_l}(S^{l'+1})  \big(m_{j_l}^{a_l-1, a_l}(S^{l'+1}) - m_{j_{l'}}^{*, a_{l'}}\big)\bigg],
 \end{multline*}
 which shows that the result also holds for $l'+1$ and hence completes the induction.
 
 (b) This part corresponds to the case 
 $l' = l$ in part (a), noting that $m_{j_l}^{a_l-1, a_l}(S^{l}) = m_{j_{l}}^{*, a_{l}}$.
 \end{proof}
 
In the following result and henceforth we use the notation $a^k(j)$ to denote the action selected in state $j$ by policy $S^k$. 
Thus, e.g., $a^1(j) = A$ for every state $j$, since $S^1 = (\emptyset, \ldots, \emptyset, \mathcal{N})$, and 
$a^{K+1}(j) = 0$ for every state $j$, since $S^{K+1} = (\mathcal{N}, \emptyset, \ldots, \emptyset)$.

\begin{lemma}
\label{lma:3mjlalm1alsklprim} Under condition \textup{(PCLI1)},   the following holds for $1 \leqslant k \leqslant l \leqslant K$:
\begin{equation}
\label{eq:3mjlalm1alsklprim}
f_{j_l}^{a_l-1, a^k(j_l)}(S^k)  = 
g_{j_l}^{a_l-1, a^k(j_l)}(S^k) m_{j_k}^{*,  a_k}  + \sum_{n=k+1}^{l} g_{j_l}^{a_l-1, a^n(j_l)}(S^n) (m_{j_n}^{*, a_n}-m_{j_{n-1}}^{*, a_{n-1}}),
\end{equation}
or, equivalently, 
\[
m_{j_l}^{a_l-1, a^k(j_l)}(S^k)  = 
m_{j_k}^{*,  a_k}  + \sum_{n=k+1}^{l} \frac{g_{j_l}^{a_l-1, a^n(j_l)}(S^n) }{g_{j_l}^{a_l-1, a^k(j_l)}(S^k) }(m_{j_n}^{*, a_n}-m_{j_{n-1}}^{*, a_{n-1}}).
\]
\end{lemma}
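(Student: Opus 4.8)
The plan is to prove Lemma \ref{lma:3mjlalm1alsklprim} by a two-step telescoping argument, first reducing it to Lemma \ref{lma:mjlalm1alsklprim}(b) via a decomposition of the ``multi-step'' marginal metrics into sums of ``single-step'' ones, and then splicing. The key observation is that for a fixed state $j = j_l$, as $k$ ranges from $l$ down to $1$, the action $a^k(j_l)$ chosen by $S^k$ moves monotonically upward from $a_l - 1$ (namely $a^l(j_l) = a_l$, and after the downshift at step $l$ one has $a^{l+1}(j_l) = a_l - 1$; conversely going backward, $a^k(j_l) \geqslant a^{k+1}(j_l)$). More precisely, each downshift step $n$ either does not touch state $j_l$ (in which case $a^n(j_l) = a^{n+1}(j_l)$) or is exactly the step $n$ at which $j_n = j_l$, where $a^n(j_l) = a_n$ drops to $a^{n+1}(j_l) = a_n - 1$. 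So the gear trajectory of $j_l$ under the chain $S^1 \succ \cdots \succ S^{K+1}$ is a nonincreasing staircase, and the marginal-cost metric $f_{j_l}^{a_l - 1, a^k(j_l)}(S^k)$ is a ``telescoping'' object in disguise.

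\textbf{Step 1: additivity of marginal metrics over intermediate gears.} First I would record the chain-rule identity $f_j^{a'',a'}(S) = f_j^{a'',a}(S) + f_j^{a,a'}(S)$ for any $a'' \leqslant a \leqslant a'$ (and likewise $g_j^{a'',a'}(S) = g_j^{a'',a}(S) + g_j^{a,a'}(S)$), which is immediate from definitions (\ref{eq:fiaapS}), (\ref{eq:giaapS}) since the $F_j(S)$ (resp. $G_j(S)$) telescope; this is already noted for $g$ in Remark \ref{rem:tdabutd2}(i). Using this with $a' = a^k(j_l)$ and refining it step by step down the chain, and recalling $a^l(j_l) = a_l$ while $a^k(j_l)$ is a nonincreasing staircase in decreasing $k$, I would write
\[
f_{j_l}^{a_l-1,\,a^k(j_l)}(S^k) \;=\; f_{j_l}^{a_l-1,\,a_l}(S^k) \;+\; \sum_{\substack{k \leqslant n < l \\ j_n = j_l}} f_{j_l}^{a_n,\,a_n-1}(S^n),
\]
where I have used that $S^n$ and $S^k$ agree on state $j_l$ except where a downshift at $j_l$ occurred (so $f_{j_l}^{a_n, a_n - 1}(S^n) = f_{j_l}^{a_n, a_n-1}(S^k)$ for those $n$; more carefully, one works gear-by-gear along the staircase so the policy argument is always the one active at that gear). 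The analogous identity holds for $g$.

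\textbf{Step 2: substitute the already-proved Lemma \ref{lma:mjlalm1alsklprim}(b).} The first term $f_{j_l}^{a_l-1,a_l}(S^k)$ is exactly covered by Lemma \ref{lma:mjlalm1alsklprim}(b), which gives it as $g_{j_l}^{a_l-1,a_l}(S^k)\, m_{j_k}^{*,a_k} + \sum_{n=k+1}^{l} g_{j_l}^{a_l-1,a_l}(S^n)\,(m_{j_n}^{*,a_n} - m_{j_{n-1}}^{*,a_{n-1}})$. For each correction term $f_{j_l}^{a_n,a_n-1}(S^n)$ with $j_n = j_l$, note $f_{j_l}^{a_n,a_n-1}(S^n) = f_{j_n}^{a_n-1,a_n}(S^n)\cdot(-1)$... no: $f_{j_l}^{a_n, a_n-1}(S^n) = f_{j_n}^{a_l? }$ — here $a_l - 1$ should be read as the relevant gear; since $j_n = j_l$ this is $f_{j_n}^{a_n, a_n-1}(S^n)$, and by the sign convention $f_j^{a,a'} = -f_j^{a',a}$ this equals $-f_{j_n}^{a_n-1,a_n}(S^n) = -g_{j_n}^{a_n-1,a_n}(S^n) m_{j_n}^{*,a_n}$ by Lemma \ref{lma:mjlalm1alsklprim}(b) with $k = l = n$, i.e. $m_{j_n}^{a_n-1,a_n}(S^n) = m_{j_n}^{*,a_n}$. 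Plugging these in, and doing the same bookkeeping on the $g$-side, the right-hand side of (\ref{eq:3mjlalm1alsklprim}) should fall out after collecting the coefficient of each $m_{j_n}^{*,a_n} - m_{j_{n-1}}^{*,a_{n-1}}$ (which becomes $g_{j_l}^{a_l-1,\,a^n(j_l)}(S^n)$ once the staircase-telescoping of the $g$'s is used in reverse) and of $m_{j_k}^{*,a_k}$ (which becomes $g_{j_l}^{a_l-1,\,a^k(j_l)}(S^k)$). The equivalent index form then follows by dividing through by $g_{j_l}^{a_l-1,a^k(j_l)}(S^k)$, which is positive by (PCLI1) (Remark \ref{rem:tdabutd2}(i)).

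\textbf{The main obstacle} I anticipate is the combinatorial bookkeeping of the gear staircase: correctly matching, for each $n$, which gear increment $a^{n}(j_l) \to a^{n+1}(j_l)$ occurs and under which policy, and verifying that the telescoped $g$-coefficients reassemble exactly into $g_{j_l}^{a_l-1,\,a^n(j_l)}(S^n)$ rather than some mix of $g$'s at different policies — this is where Lemma \ref{lma:P64MP02more}(a) (invariance of the proportionality constant under $\mathcal{T}$) or Lemma \ref{lma:P64MP02}(b) may be needed to rewrite a $g^{\cdots}(S^m)$ as $g^{\cdots}(S^{m'})$ up to the right scalar. A cleaner alternative, which I would try if the direct telescoping gets unwieldy, is a downward induction on $k$ from $k = l$ (base case is $m_{j_l}^{a_l-1,a_l}(S^l) = m_{j_l}^{*,a_l}$, trivial): assume (\ref{eq:3mjlalm1alsklprim}) for $k+1$, then use Lemma \ref{lma:P64MP02more}(b) with $i = j_k$, $\bar a = a_k$, $\bar a' = a_k - 1$, $S = S^k$, and the pair $(a', a) $ replaced by $(a^{k+1}(j_l), a_l - 1)$ — noting $a^k(j_l) = a^{k+1}(j_l)$ unless $j_k = j_l$ (impossible since $k < l$ and the $(j_n,a_n)$ are distinct, wait—$j_k = j_l$ with $a_k \ne a_l$ is possible) — to pass from $S^{k+1}$ to $S^k$, exactly as Lemma \ref{lma:recindcomp} does for the single-gear case; this localizes all the algebra to one application of an already-proved lemma per induction step and sidesteps the global staircase accounting.
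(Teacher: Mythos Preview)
Your main approach (Steps 1--2) does not work as written. The decomposition formula in Step~1,
\[
f_{j_l}^{a_l-1,\,a^k(j_l)}(S^k) \;=\; f_{j_l}^{a_l-1,\,a_l}(S^k) \;+\; \sum_{\substack{k \leqslant n < l \\ j_n = j_l}} f_{j_l}^{a_n,\,a_n-1}(S^n),
\]
has two problems. First, the sign is wrong: additivity at a fixed policy gives $f_{j_l}^{a_l-1,a^k(j_l)}(S^k)=f_{j_l}^{a_l-1,a_l}(S^k)+\sum f_{j_l}^{a_n-1,a_n}(S^k)$ (summing over the staircase steps at $j_l$), not the negated terms $f_{j_l}^{a_n,a_n-1}$. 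Second, and more seriously, you replace the common policy $S^k$ by varying policies $S^n$ in the sum, justifying this by saying $S^n$ and $S^k$ ``agree on state $j_l$''. But $f_j^{a,a'}(S)$ depends on the \emph{entire} vector $\mathbf{F}(S)$ through (\ref{eq:fiaapS}), not just on what $S$ does at state $j$; so $f_{j_l}^{a_n-1,a_n}(S^n)\neq f_{j_l}^{a_n-1,a_n}(S^k)$ in general. You yourself flag this as ``the main obstacle'' but do not resolve it, and it cannot be resolved without invoking exactly the kind of policy-transfer identity (Lemma~\ref{lma:P64MP02more}(a)) that makes the direct telescoping collapse into an inductive argument anyway.

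Your \emph{alternative} --- downward induction on $k$ from $k=l$, using Lemma~\ref{lma:P64MP02more}(a) at each step to pass from $S^{k+1}$ to $S^k$ --- is sound and would give a clean proof once you handle the case $j_k=j_l$ (where $a^k(j_l)=a_k$ differs from $a^{k+1}(j_l)=a_k-1$, so you need one application of gear-additivity on top of Lemma~\ref{lma:P64MP02more}(a)). This is a genuinely different route from the paper's: the paper fixes $k$ and inducts \emph{upward on $l$}, splitting into cases $j_{l+1}=j_l$ versus $j_{l+1}\neq j_l$ (and further on whether the current gear is $A$), and leaning on Lemma~\ref{lma:mjlalm1alsklprim}(b) at each step. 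Your downward-on-$k$ induction avoids invoking Lemma~\ref{lma:mjlalm1alsklprim}(b) in the inductive step and yields a somewhat tighter argument; the paper's route, conversely, makes the dependence on the single-gear expansion more transparent. Commit to the alternative and write out both cases of the inductive step explicitly.
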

\begin{proof}
\textls[-25]{Fix $k$. We prove the result by induction on $l = k, \ldots, K$. For $l = k$, using that \mbox{$a^k(j_k) = a_k$}}  gives that (\ref{eq:3mjlalm1alsklprim})  reduces to 
$f_{j_l}^{a_k-1, a_k}(S^k)  = 
g_{j_k}^{a_k-1, a_k}(S^k) m_{j_k}^{*,  a_k}$, which holds by construction, since $m_{j_k}^{*,  a_k}$ is defined in the algorithm precisely as $m_{j_k}^{a_k-1, a_k}(S^k) = f_{j_l}^{a_k-1, a_k}(S^k)/ 
g_{j_k}^{a_k-1, a_k}(S^k)$.

Suppose now that (\ref{eq:3mjlalm1alsklprim}) holds up to and including some $l$ with $k < l < K$. 
We will prove that it must then hold for $l+1$. For such a purpose, we distinguish two cases, depending on whether $j_{l+1} = j_l$ or $j_{l+1} \neq j_l$.
Start with the case $j_{l+1} = j_l$. To simplify the argument below, we write $j_l$ as $j$ and $a_l$ as $a$. In this case, the algorithm downshifts in step $l$ the gear  in state $j$ from $a$ to $a-1$ and in step $l+1$ downshifts again in state $j$ from gear $a-1$ to $a-2$. Hence, $a^l(j) = a$ and $a^{l+1}(j) = a_{l+1} = a-1$. We can write
\begin{align*}
f_{j_{l+1}}^{a_{l+1}-1, a^k(j_{l+1})}(S^k)  & = f_{j_{l+1}}^{a_{l+1}-1, a_{l+1}}(S^k) + f_{j_{l+1}}^{a_{l+1}, a^k(j)}(S^k) = 
f_{j}^{a-2, a-1}(S^k) + f_{j}^{a-1, a^k(j)}(S^k)
\\
& =  
g_{j}^{a-2, a-1}(S^k) m_{j_k}^{*,  a_k} +  \sum_{n=k+1}^{l+1}  g_{j}^{a-2, a-1}(S^n) (m_{j_n}^{*, a_n}-m_{j_{n-1}}^{*, a_{n-1}}) \\
& \quad +  
g_{j}^{a-1, a^k(j)}(S^k) m_{j_k}^{*,  a_k}  + \sum_{n=k+1}^{l} g_{j}^{a-1, a^n(j)}(S^n) (m_{j_n}^{*, a_n}-m_{j_{n-1}}^{*, a_{n-1}}) \\
& =  
g_{j}^{a-2, a^k(j)}(S^k) m_{j_k}^{*,  a_k} +  \sum_{n=k+1}^{l+1}  g_{j}^{a-2, a^n(j)}(S^n) (m_{j_n}^{*, a_n}-m_{j_{n-1}}^{*, a_{n-1}}) \\
& = g_{j_{l+1}}^{a_{l+1}-1, a^k(j)}(S^k) m_{j_k}^{*,  a_k} +  \sum_{n=k+1}^{l+1}  g_{j_{l+1}}^{a_{l+1}-1, a^n(j)}(S^n) (m_{j_n}^{*, a_n}-m_{j_{n-1}}^{*, a_{n-1}}),
\end{align*}
where we have used in turn the elementary property $f_{j}^{a, a''}(S)  = f_{j}^{a, a'}(S) + f_{j}^{a', a''}(S)$, \mbox{Lemma \ref{lma:mjlalm1alsklprim} (b)}, the induction hypothesis, and $a^{l+1}(j) = a-1$. Therefore, the result holds for $l+1$ in this case.

Consider now the case $j_{l+1} \neq j_l$, in which $a_{l+1} = a^{l+1}(j_{l+1}) = a^{l}(j_{l+1})$. To simplify the argument below, we write $j_{l+1}$ as $j$ and $a_{l+1}$ as $a$. 
In this case, the algorithm downshifts in step $l+1$ at state $j$ from gear $a$ to $a-1$.
If $a < A$, the previous downshift at state $j$, from gear $a+1$ to $a$,  occurred at some earlier step $l' < l$, so $j_{l'} = j$, $a_{l'}-1 = a$, and 
\begin{equation}
\label{eq:alprimeid}
a^{l'+1}(j)  = \cdots = a^{l}(j) = a^{l+1}(j) = a.
\end{equation}

We can now write
\begin{align*}
f_{j_{l+1}}^{a_{l+1}-1, a^k(j_{l+1})}(S^k)  & = f_{j_{l+1}}^{a_{l+1}-1, a_{l+1}}(S^k) + f_{j_{l+1}}^{a_{l+1}, a^k(j)}(S^k) = 
f_{j}^{a-1, a}(S^k) + f_{j}^{a, a^k(j)}(S^k) \\
& = g_{j}^{a-1, a}(S^k) m_{j_k}^{*,  a_k}  + \sum_{n=k+1}^{l+1} g_{j}^{a-1, a}(S^n) (m_{j_n}^{*, a_n}-m_{j_{n-1}}^{*, a_{n-1}}) \\
& \quad + g_{j_{l'}}^{a_{l'}-1, a^k(j_{l'})}(S^k) m_{j_k}^{*,  a_k}  + \sum_{n=k+1}^{l'} g_{j_l}^{a_{l'}-1, a^n(j_{l'})}(S^n) (m_{j_n}^{*, a_n}-m_{j_{n-1}}^{*, a_{n-1}}) \\
& = g_{j}^{a-1, a^k(j)}(S^k) m_{j_k}^{*,  a_k}  + \sum_{n=k+1}^{l'} g_{j}^{a-1, a^n(j)}(S^n) (m_{j_n}^{*, a_n}-m_{j_{n-1}}^{*, a_{n-1}}) \\
& \quad + \sum_{n=l'+1}^{l+1} g_{j}^{a-1, a}(S^n) (m_{j_n}^{*, a_n}-m_{j_{n-1}}^{*, a_{n-1}}) \\
& = g_{j_{l+1}}^{a_{l+1}-1, a^k(j)}(S^k) m_{j_k}^{*,  a_k} +  \sum_{n=k+1}^{l+1}  g_{j_{l+1}}^{a_{l+1}-1, a^n(j)}(S^n) (m_{j_n}^{*, a_n}-m_{j_{n-1}}^{*, a_{n-1}}),
\end{align*}
where we have used in turn Lemma \ref{lma:mjlalm1alsklprim} (b), the induction hypothesis, and (\ref{eq:alprimeid}).

Suppose now that $a = A$. Then,
\begin{equation}
\label{eq:2alprimeid}
a^k(j) = a^{k+1}(j)  = \cdots = a^{l+1}(j) = A,
\end{equation}
and we can write 
\begin{align*}
f_{j_{l+1}}^{a_{l+1}-1, a^k(j_{l+1})}(S^k)  & = 
f_{j}^{A-1, A}(S^k)  \\
& = g_{j}^{A-1, A}(S^k) m_{j_k}^{*,  a_k}  + \sum_{n=k+1}^{l+1} g_{j}^{A-1, A}(S^n) (m_{j_n}^{*, a_n}-m_{j_{n-1}}^{*, a_{n-1}}) \\
& = g_{j_{l+1}}^{a_{l+1}-1, a^k(j)}(S^k) m_{j_k}^{*,  a_k} +  \sum_{n=k+1}^{l+1}  g_{j_{l+1}}^{a_{l+1}-1, a^n(j)}(S^n) (m_{j_n}^{*, a_n}-m_{j_{n-1}}^{*, a_{n-1}}),
\end{align*}
where we have used in turn Lemma \ref{lma:mjlalm1alsklprim} (b) and (\ref{eq:2alprimeid}).

Hence, the result also holds for $l+1$ in the case $j_{l+1} \neq j_l$, which completes the induction proof.
\end{proof}

The next result expresses the modified holding costs $\widehat{h}_{j}^{a}$ as positive linear combinations of $m_{j_1}^{*,  a_1}$ and the differences $m_{j_n}^{*, a_n}-m_{j_{n-1}}^{*, a_{n-1}}$. 
We will use it in Lemmas \ref{lma:objdual2} and \ref{lma:objduallamb} to reformulate  the modified cost objective $\widehat{V}_p(\lambda, \pi)$.

\begin{lemma}
\label{lma:objdual}  Under condition (PCLI1),
\begin{equation}
\label{eq:hhatgy}
\begin{split}
g_{j_1}^{a_1-1, A}(S^1) m_{j_1}^{*,  a_1} & = \widehat{h}_{j_1}^{a_1-1} \\
g_{j_l}^{a_l-1, A}(S^1) m_{j_1}^{*,  a_1}  + \sum_{n=2}^{l} g_{j_l}^{a_l-1, a^n(j_l)}(S^n) (m_{j_n}^{*, a_n}-m_{j_{n-1}}^{*, a_{n-1}}) & = \widehat{h}_{j_l}^{a_l-1}, \enspace l = 2, \ldots, K.
\end{split}
\end{equation}
\end{lemma}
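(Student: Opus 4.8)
The plan is to obtain both displayed identities simultaneously as the $k=1$ specialization of Lemma \ref{lma:3mjlalm1alsklprim}, once its left-hand side is identified with $\widehat{h}_{j_l}^{a_l-1}$.

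First I would use that $S^1 = (\emptyset, \ldots, \emptyset, \mathcal{N})$ selects the top gear everywhere, so $a^1(j) = A$ for all $j \in \mathcal{N}$. Putting $k = 1$ in identity (\ref{eq:3mjlalm1alsklprim}) then gives, for each $1 \leqslant l \leqslant K$,
\[
f_{j_l}^{a_l-1, A}(S^1) = g_{j_l}^{a_l-1, A}(S^1)\, m_{j_1}^{*, a_1} + \sum_{n=2}^{l} g_{j_l}^{a_l-1, a^n(j_l)}(S^n)\,(m_{j_n}^{*, a_n} - m_{j_{n-1}}^{*, a_{n-1}}),
\]
with the convention that the sum is empty when $l = 1$. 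The right-hand side here is precisely the left-hand side of the two identities to be proved, so it only remains to show $f_{j_l}^{a_l-1, A}(S^1) = \widehat{h}_{j_l}^{a_l-1}$.

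Second, I would establish the auxiliary telescoping identity $f_j^{a-1, A}(S^1) = \widehat{h}_j^{a-1}$ valid for every state $j$ and active gear $a \geqslant 1$. The additivity $f_j^{a-1, A}(S^1) = \sum_{a'=a}^{A} f_j^{a'-1, a'}(S^1)$ is immediate from the definition $f_i^{b, b'}(S) = F_i(\langle b, S\rangle) - F_i(\langle b', S\rangle)$ (this is the same elementary property already invoked in the proof of Lemma \ref{lma:3mjlalm1alsklprim}). Applying Corollary \ref{cor:fiam1as1hiam1}(a) termwise gives $f_j^{a'-1, a'}(S^1) = \widehat{h}_j^{a'-1} - \widehat{h}_j^{a'}$, so the sum telescopes to $\widehat{h}_j^{a-1} - \widehat{h}_j^{A}$, which equals $\widehat{h}_j^{a-1}$ by (\ref{eq:haththetaazero}).

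Combining the two steps with $j = j_l$ and $a = a_l$ yields exactly (\ref{eq:hhatgy}): the first identity is the case $l = 1$ (empty sum) and the second is the case $l \geqslant 2$. I do not expect any genuine obstacle here; the only point requiring care is the vanishing of the boundary term $\widehat{h}_j^{A}$ in the telescoping sum, which is exactly what the normalization (\ref{eq:haththetaazero}) of the modified holding costs under gear $A$ supplies.
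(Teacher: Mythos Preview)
Your proposal is correct and follows essentially the same route as the paper: specialize Lemma~\ref{lma:3mjlalm1alsklprim} to $k=1$ (using $a^1(j)=A$) and then identify $f_{j_l}^{a_l-1,A}(S^1)$ with $\widehat{h}_{j_l}^{a_l-1}$ via Corollary~\ref{cor:fiam1as1hiam1}(a) and $\widehat{h}_j^A=0$. The only cosmetic difference is that the paper invokes Corollary~\ref{cor:fiam1as1hiam1}(a) as if it directly yields $f_j^{a-1,A}(S^1)=\widehat{h}_j^{a-1}-\widehat{h}_j^{A}$, whereas you spell out the telescoping $f_j^{a-1,A}(S^1)=\sum_{a'=a}^{A}f_j^{a'-1,a'}(S^1)$ to reach the same conclusion; your version is slightly more explicit but not a different argument.
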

\begin{proof}
The result follows from Corollary \ref{cor:fiam1as1hiam1} (a), which shows that  $f_j^{a-1, A}(S^1) = \widehat{h}_j^{a-1} - \widehat{h}_j^{A} = \widehat{h}_j^{a-1}$ (since $\widehat{h}_j^{A}  = 0$), and Lemma \ref{lma:3mjlalm1alsklprim}, used with $k = 1$, noting that $a^1(j) = A$.
\end{proof}

The following result shows that $m_{{j_{k-1}}}^{*, a_{k-1}}$ can be expressed in two different ways in terms of MP metrics. 
\begin{lemma}
\label{lma:indextwoway}  Under condition (PCLI1), 
\[
m_{{j_{k-1}}}^{a_{k-1}-1, a_{k-1}}(S^k) = m_{{j_{k-1}}}^{a_{k-1}-1, a_{k-1}}(S^{k-1}) = m_{{j_{k-1}}}^{*, a_{k-1}}, \enspace k = 2, \ldots, K+1.
\]
\end{lemma}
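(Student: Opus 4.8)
The statement is a chain of two equalities, $m_{{j_{k-1}}}^{a_{k-1}-1, a_{k-1}}(S^k) = m_{{j_{k-1}}}^{a_{k-1}-1, a_{k-1}}(S^{k-1}) = m_{{j_{k-1}}}^{*, a_{k-1}}$, and I would dispatch them in the reverse order. The rightmost equality is immediate from the definition of $m_{j_{k-1}}^{*, a_{k-1}}$ in Algorithm $\mathrm{DS}(\mathcal{F})$: the algorithm sets $m_{j_{k-1}}^{*, a_{k-1}} := m_{j_{k-1}}^{a_{k-1}-1, a_{k-1}}(S^{k-1})$, either in the initialization (when $k-1 = 1$) or in the loop (when $2 \leqslant k-1 \leqslant K$). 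One should note here that, although the algorithm produces this MP metric through the recursion (\ref{eq:recindcpt}) rather than directly, Lemma \ref{lma:recindcomp} guarantees that the recursively computed quantity equals the ratio $f_{j_{k-1}}^{a_{k-1}-1, a_{k-1}}(S^{k-1}) / g_{j_{k-1}}^{a_{k-1}-1, a_{k-1}}(S^{k-1})$, so the notation $m_{j_{k-1}}^{a_{k-1}-1, a_{k-1}}(S^{k-1})$ is unambiguous.

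For the leftmost equality I would invoke Lemma \ref{lma:P64MP02}(b), which states that, under (PCLI1), the MP metric $m_j^{a', a}(\cdot)$ is invariant under the single-state gear shift $\mathcal{T}_j^{a, a'}$. By construction of the algorithm, $S^k = \mathcal{T}_{j_{k-1}}^{a_{k-1}, a_{k-1}-1} S^{k-1}$, with $j_{k-1} \in S^{k-1}_{a_{k-1}}$ (this membership is part of the $\argmin$ constraint defining $(j_{k-1}, a_{k-1})$, and for $k-1 = 1$ it holds trivially since $S^1_A = \mathcal{N}$). Moreover $S^{k-1}, S^k \in \mathcal{F}$ by Remark \ref{rem:tdabutd}(iii) and $a_{k-1} \geqslant 1$, so (PCLI1) ensures that $g_{j_{k-1}}^{a_{k-1}-1, a_{k-1}}(S^{k-1})$ and $g_{j_{k-1}}^{a_{k-1}-1, a_{k-1}}(S^k)$ are both strictly positive, hence both MP metrics in the claimed equality are well defined. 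Applying Lemma \ref{lma:P64MP02}(b) with $S := S^{k-1}$, $a := a_{k-1}$, $a' := a_{k-1}-1$, $j := j_{k-1}$ then yields $m_{j_{k-1}}^{a_{k-1}-1, a_{k-1}}(S^{k-1}) = m_{j_{k-1}}^{a_{k-1}-1, a_{k-1}}(\mathcal{T}_{j_{k-1}}^{a_{k-1}, a_{k-1}-1} S^{k-1}) = m_{j_{k-1}}^{a_{k-1}-1, a_{k-1}}(S^k)$, which is exactly what is wanted. The endpoint case $k = K+1$ requires no extra argument: although the pair $(j_{K+1}, a_{K+1})$ is not defined, the policy $S^{K+1} = \mathcal{T}_{j_K}^{a_K, a_K-1} S^K$ is, and it lies in $\mathcal{F}$.

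I do not anticipate a genuine obstacle in this proof: its entire content is already encapsulated in Lemma \ref{lma:P64MP02}(b), and what remains is only the bookkeeping of checking that the hypotheses $j \in S_a$ and the (PCLI1) positivity apply to the specific policies $S^{k-1}$, $S^k$ produced by the algorithm, together with identifying the second equality as the algorithm's own definition. If anything, the one point worth stating carefully is that $m_{j_{k-1}}^{a_{k-1}-1, a_{k-1}}(S^k)$ here denotes the genuine ratio of marginal metrics (consistent with Lemma \ref{lma:recindcomp}), so that Lemma \ref{lma:P64MP02}(b) can be applied verbatim.
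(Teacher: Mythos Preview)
Your proposal is correct and follows essentially the same route as the paper: the first equality is obtained by applying Lemma~\ref{lma:P64MP02}(b) with $S = S^{k-1}$, $a = a_{k-1}$, $a' = a_{k-1}-1$, $j = j_{k-1}$, using $S^k = \mathcal{T}_{j_{k-1}}^{a_{k-1}, a_{k-1}-1} S^{k-1}$, and the second equality is simply the algorithm's definition of $m_{j_{k-1}}^{*, a_{k-1}}$. Your additional care in verifying the hypotheses ($j_{k-1} \in S^{k-1}_{a_{k-1}}$, $S^{k-1}, S^k \in \mathcal{F}$, positivity of the marginal resource metrics) is welcome but does not depart from the paper's argument.
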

\begin{proof}
The first identity follows from the result $m_{j}^{a', a}(\mathcal{T}_j^{a, a'} S) = m_{j}^{a', a}(S)$ in Lemma \ref{lma:P64MP02} (b), taking $S = S^{k-1}$, $a = a_{k-1}$, $a' = a_{k-1}-1$, and $j = j_{k-1}$,  and noting that $ S^k = \mathcal{T}_{j_{k-1}}^{a_{k-1}, a_{k-1}-1} S^{k-1}$. The second identity follows by definition of $m_{{j_{k-1}}}^{*, a_{k-1}}$.
\end{proof}

The following result relates metrics under two successive policies as generated by the index algorithm.
\begin{lemma}
\label{lma:lambdapprec}  Under condition (PCLI1), 
\begin{itemize}
\item[(a)] $F_p(S^k) =   
F_p(S^{k-1}) + f_{{j_{k-1}}}^{a_{k-1}-1, a_{k-1}}(S^k) x_{p {j_{k-1}}}^{a_{k-1}}(S^{k-1}),$ \enspace $k = 2, \ldots, K+1;$
\item[(b)] $F_p(S^{k})  = F_p(S^{k+1}) - f_{{j_{k}}}^{a_{k}-1, a_{k}}(S^{k+1}) x_{p {j_{k}}}^{a_{k}}(S^{k}),$ \enspace $k = 1, \ldots, K;$
\item[(c)] $G_p(S^k)  = G_p(S^{k-1}) - g_{j_{k-1}}^{a_{k-1}-1, a_{k-1}}(S^k) x_{p {j_{k-1}}}^{a_{k-1}}(S^{k-1}),$ \enspace $k = 2, \ldots, K+1;$
\item[(d)] $G_p(S^{k})  = G_p(S^{k+1}) + g_{j_{k}}^{a_{k}-1, a_{k}}(S^{k+1}) x_{p {j_{k}}}^{a_{k}}(S^{k}),$ \enspace $k = 1, \ldots, K;$
\item[(e)] $V_p(\lambda, S^k) = V_p(\lambda, S^{k-1}) - (\lambda - m_{j_{k-1}}^{*, a_{k-1}}) 
  g_{j_{k-1}}^{a_{k-1}-1, a_{k-1}}(S^k) x_{p j_{k-1}}^{a_{k-1}}(S^{k-1})$, \enspace k = 2, \ldots, K+1.
\item[(f)] $V_p(\lambda, S^k) = V_p(\lambda, S^{k+1}) - (m_{j_{k}}^{*, a_{k}} - \lambda) 
  g_{j_{k}}^{a_{k}-1, a_{k}}(S^{k+1}) x_{p j_{k}}^{a_{k}}(S^{k})$, \enspace k = 1, \ldots, K.
\end{itemize}
\end{lemma}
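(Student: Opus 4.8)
The plan is to obtain parts (a) and (c) directly from Lemma~\ref{lma:FfGgkl}, and then derive (b), (d), (e), (f) by re-indexing and by combining. Throughout, recall that by construction of Algorithm~\ref{alg:tdabutd} one has $S^{k} = \mathcal{T}_{j_{k-1}}^{a_{k-1}, a_{k-1}-1} S^{k-1}$ with $j_{k-1} \in S^{k-1}_{a_{k-1}}$, so Lemma~\ref{lma:FfGgkl} is applicable with $S = S^{k-1}$, $a = a_{k-1}$, $a' = a_{k-1}-1$, $j = j_{k-1}$, and $\mathcal{T}_j^{a, a'} S = S^{k}$.

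First I would prove (a): Lemma~\ref{lma:FfGgkl}(b), read under the above substitution, is exactly
\[
F_p(S^{k}) = F_p(S^{k-1}) + f_{j_{k-1}}^{a_{k-1}-1, a_{k-1}}(S^{k})\, x_{p j_{k-1}}^{a_{k-1}}(S^{k-1}).
\]
Part (b) then follows upon replacing $k$ by $k+1$ in (a) --- legitimate for $k = 1, \ldots, K$ since $k+1$ then ranges over $2, \ldots, K+1$ --- and moving the marginal term to the other side. Symmetrically, (c) is Lemma~\ref{lma:FfGgkl}(d) under the same substitution, rearranged, and (d) is (c) with $k$ replaced by $k+1$.

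Next, for (e) I would write $V_p(\lambda, S^{k}) = F_p(S^{k}) + \lambda G_p(S^{k})$ and add the identity in (a) to $\lambda$ times the identity in (c), obtaining
\[
V_p(\lambda, S^{k}) = V_p(\lambda, S^{k-1}) + \big(f_{j_{k-1}}^{a_{k-1}-1, a_{k-1}}(S^{k}) - \lambda\, g_{j_{k-1}}^{a_{k-1}-1, a_{k-1}}(S^{k})\big)\, x_{p j_{k-1}}^{a_{k-1}}(S^{k-1}).
\]
I would then use the definition~(\ref{eq:lambdiaapS}) of the MP metric to write $f_{j_{k-1}}^{a_{k-1}-1, a_{k-1}}(S^{k}) = m_{j_{k-1}}^{a_{k-1}-1, a_{k-1}}(S^{k})\, g_{j_{k-1}}^{a_{k-1}-1, a_{k-1}}(S^{k})$ --- this is where (PCLI1) enters, guaranteeing $g_{j_{k-1}}^{a_{k-1}-1, a_{k-1}}(S^{k}) > 0$ so the MP metric is well defined --- and invoke Lemma~\ref{lma:indextwoway} to replace $m_{j_{k-1}}^{a_{k-1}-1, a_{k-1}}(S^{k})$ by the index value $m_{j_{k-1}}^{*, a_{k-1}}$. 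Factoring out $g_{j_{k-1}}^{a_{k-1}-1, a_{k-1}}(S^{k})$ turns the coefficient into $-(\lambda - m_{j_{k-1}}^{*, a_{k-1}})$, which is (e). Finally, (f) is (e) with $k$ replaced by $k+1$, rearranged.

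The whole argument is bookkeeping: the only points requiring any care are checking that the hypothesis $j_{k-1} \in S^{k-1}_{a_{k-1}}$ of Lemma~\ref{lma:FfGgkl} holds at each step --- immediate from how the algorithm picks $(j_{k-1}, a_{k-1})$ before downshifting --- and keeping the superscript pairs $a_{k-1}-1, a_{k-1}$ and the policy arguments $S^{k}$ versus $S^{k-1}$ straight when rewriting marginal cost metrics as products of an MP metric and a marginal resource metric. I do not expect a genuine obstacle here.
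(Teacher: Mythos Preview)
Your proposal is correct and follows essentially the same approach as the paper: both derive (a) and (c) from Lemma~\ref{lma:FfGgkl}, obtain (b) and (d) by re-indexing, and then combine with Lemma~\ref{lma:indextwoway} for (e) and (f). The only cosmetic difference is that the paper invokes parts (a) and (c) of Lemma~\ref{lma:FfGgkl} with $S=S^{k}$ (upshifting back to $S^{k-1}$), whereas you invoke parts (b) and (d) with $S=S^{k-1}$; since (b) and (d) are themselves just (a) and (c) re-indexed, this is the same argument.
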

\begin{proof}
(a) This part follows from Lemma \ref{lma:FfGgkl} (a) by taking $S = S^k$, $a = a_{k-1}-1$, $a' = a_{k-1}$, and $j = j_{k-1}$,  noting that $\mathcal{T}_{j_{k-1}}^{a_{k-1}-1, a_{k-1}} S^k = S^{k-1}$ in $F_p(S) =   
F_p(\mathcal{T}_j^{a, a'} S) + f_{j}^{a, a'}(S) x_{pj}^{a'}(\mathcal{T}_j^{a, a'} S)$. 

(b) This part follows directly from (a).

(c) This part follows from Lemma \ref{lma:FfGgkl} (c) by taking $S$, $a$, $a'$, and $j$ as in part (a) in $G_p({\mathcal{T}_j^{a, a'} S}) = G_p(S) + g_{j}^{a, a'}(S) x_{pj}^{a'}(\mathcal{T}_j^{a, a'} S)$. 

(d) This part follows directly from (c).

(e) The result follows from 
\begin{align*}
V_p(\lambda, S^{k}) & = F_p(S^k) + \lambda G_p(S^k) \\
& = F_p(S^{k-1}) + f_{{j_{k-1}}}^{a_{k-1}-1, a_{k-1}}(S^k) x_{p {j_{k-1}}}^{a_{k-1}}(S^{k-1}) \\
& \qquad + \lambda \big[G_p(S^{k-1}) - g_{j_{k-1}}^{a_{k-1}-1, a_{k-1}}(S^k) x_{p {j_{k-1}}}^{a_{k-1}}(S^{k-1})\big] \\
& = V_p(\lambda, S^{k-1}) + \big(f_{{j_{k-1}}}^{a_{k-1}-1, a_{k-1}}(S^k) - \lambda g_{j_{k-1}}^{a_{k-1}-1, a_{k-1}}(S^k)\big)
x_{p {j_{k-1}}}^{a_{k-1}}(S^{k-1}) \\
& = V_p(\lambda, S^{k-1}) + \big(m_{{j_{k-1}}}^{a_{k-1}-1, a_{k-1}}(S^k) - \lambda\big) g_{j_{k-1}}^{a_{k-1}-1, a_{k-1}}(S^k)
x_{p {j_{k-1}}}^{a_{k-1}}(S^{k-1}) \\
& = V_p(\lambda, S^{k-1}) - \big(\lambda - m_{{j_{k-1}}}^{*, a_{k-1}}\big) g_{j_{k-1}}^{a_{k-1}-1, a_{k-1}}(S^k)
x_{p {j_{k-1}}}^{a_{k-1}}(S^{k-1}),
\end{align*}
where we have used parts (a, c) and Lemma \ref{lma:indextwoway}.

(f) The result follows from 
\begin{align*}
V_p(\lambda, S^k) & = F_p(S^k) + \lambda G_p(S^k) \\
& = F_p(S^{k+1}) - f_{{j_{k}}}^{a_{k}-1, a_{k}}(S^{k+1}) x_{p {j_{k}}}^{a_{k}}(S^{k}) \\
& \qquad + \lambda \big[G_p(S^{k+1}) + g_{j_{k}}^{a_{k}-1, a_{k}}(S^{k+1}) x_{p {j_{k}}}^{a_{k}}(S^{k})\big] \\
& = V_p(\lambda, S^{k+1}) - \big(f_{{j_{k}}}^{a_{k}-1, a_{k}}(S^{k+1}) - \lambda g_{{j_{k}}}^{a_{k}-1, a_{k}}(S^{k+1})\big)
x_{p {j_{k}}}^{a_{k}}(S^{k}) \\
& = V_p(\lambda, S^{k+1}) - \big(m_{{j_{k}}}^{a_{k}-1, a_{k}}(S^{k+1}) - \lambda\big) g_{{j_{k}}}^{a_{k}-1, a_{k}}(S^{k+1})
x_{p {j_{k}}}^{a_{k}}(S^{k}) \\
& = V_p(\lambda, S^{k-1}) - \big(\lambda - m_{{j_{k-1}}}^{*, a_{k-1}}\big) g_{j_{k-1}}^{a_{k-1}-1, a_{k-1}}(S^k)
x_{p {j_{k-1}}}^{a_{k-1}}(S^{k-1}),
\end{align*}
where we have used parts (b, d) and Lemma \ref{lma:indextwoway}.
\end{proof}

\section{Partial Conservation Laws}
\label{s:pcl}
This section shows that, under condition (PCLI1) in Definition \ref{def:pclind}, project performance metrics satisfy certain \emph{partial conservation laws} (PCLs), which extend those previously introduced by the author for finite-state restless (two-gear) bandits in~\cite{nmaap01,nmmp02}. It further uses those PCLs to lay further groundwork towards the proof of Theorem \ref{the:verthe}.

In the following result, we assume that the initial-state distribution $p$ has full support, which we write as $p > 0$.

\begin{proposition}[PCLs]
\label{pro:pcls} Suppose that  \textup{(PCLI1)} holds and let $p > 0$. Then, metrics $G_p(\pi)$ and $x_{pj}^{a}(\pi)$, for states $j$ and actions $a < A$, satisfy the following: for any admissible policy $\pi$ and $S \in \mathcal{F},$ 
\begin{itemize}
\item[(a.1)] $G_p(\pi)  + \sum_{a < a'} \sum_{j \in S_{a'}} g_{j}^{a, a'}(S) x_{pj}^{a}(\pi)  \geqslant 
G_p(S),$
with equality (conservation law),
\begin{equation}
\label{eq:csa1}
G_p(\pi)  + \sum_{a < a'} \sum_{j \in S_{a'}} g_{j}^{a, a'}(S) x_{pj}^{a}(\pi)  = 
G_p(S),
\end{equation}
 iff $\pi$ selects gears $a' \leqslant a$  in states $j \in S_{a}$ (so $\pi \preceq S$), for $a = 0, \ldots, A-1$.
 \item[(a.2)] In particular, for $S =  S^{K+1} = (\mathcal{N}, \emptyset, \ldots, \emptyset)$, it holds that
$G_p(\pi)    \geqslant G_p{(S^{K+1})}$, 
with equality  iff $\pi$ selects gear $0$ in every state.
\item[(a.3)] In the case $S =  S^1 = (\emptyset, \ldots, \emptyset, \mathcal{N})$, we have the conservation law
\begin{equation}
\label{eq:csa3}
G_p(\pi)  + \sum_{a < A} \sum_{j \in \mathcal{N}} g_{j}^{a, A}(S^1) x_{pj}^{a}(\pi)  = 
G_p{(S^1)}.
\end{equation}
\item[(b)] $\sum_{a < a'} \sum_{j \in S_{a'}} g_{j}^{a, a'}(S) x_{pj}^{a}(\pi)  \geqslant 
0$, 
with equality iff $\pi$ selects gears $a \geqslant a'$ in states $j \in S_{a'}$ (so $S \preceq \pi$), for $a' = 1, \ldots, A$.
\end{itemize}
\end{proposition}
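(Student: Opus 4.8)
The plan is to derive everything from the performance-metrics decomposition Lemma~\ref{lma:declaws}(b), which for any admissible $\pi$ and stationary deterministic $S$ reads
\[
G_p(\pi)+\sum_{a<a'}\sum_{j\in S_{a'}}g_j^{a,a'}(S)\,x_{pj}^a(\pi)=G_p(S)+\sum_{a'<a}\sum_{j\in S_{a'}}g_j^{a',a}(S)\,x_{pj}^a(\pi).
\]
When $S\in\mathcal{F}$ and \textup{(PCLI1)} holds, the telescoping identity recalled in Remark~\ref{rem:tdabutd2}(i) gives $g_j^{b,b'}(S)>0$ whenever $b<b'$; together with $x_{pj}^a(\pi)\geqslant0$, this shows that both correction sums above are sums of nonnegative terms.

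Part~(b) is then immediate: the left-hand correction sum is $\geqslant0$, and, since every coefficient $g_j^{a,a'}(S)$ is strictly positive, it equals $0$ iff $x_{pj}^a(\pi)=0$ for all $j\in S_{a'}$ and all $a<a'$. Because $p>0$ forces $\sum_a x_{pj}^a(\pi)\geqslant p_j>0$, so that every state is visited, this is equivalent to $\pi$ selecting, in each state $j\in S_{a'}$ with $a'\geqslant1$, only gears $\geqslant a'$, i.e.\ to $S\preceq\pi$. For part~(a.1) I substitute the identity into the claimed left-hand side to get $G_p(S)+\sum_{a'<a}\sum_{j\in S_{a'}}g_j^{a',a}(S)\,x_{pj}^a(\pi)\geqslant G_p(S)$; equality holds iff the right-hand correction sum vanishes, iff $x_{pj}^a(\pi)=0$ for all $j\in S_{a'}$ and all $a>a'$, which by $p>0$ means $\pi$ selects only gears $a'\leqslant a$ in states $j\in S_a$ for $a=0,\dots,A-1$, i.e.\ $\pi\preceq S$.

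Parts~(a.2) and~(a.3) follow by specializing $S$. For $S=S^{K+1}=(\mathcal{N},\emptyset,\dots,\emptyset)$ one has $S_{a'}=\emptyset$ for every $a'\geqslant1$, so the left-hand correction sum in~(a.1) is empty and~(a.1) reduces to $G_p(\pi)\geqslant G_p(S^{K+1})$, with equality iff $\pi\preceq S^{K+1}$, i.e.\ iff $\pi$ selects gear $0$ in every state. For $S=S^1=(\emptyset,\dots,\emptyset,\mathcal{N})$ only $S_A=\mathcal{N}$ is nonempty, so the right-hand correction sum in~(a.1) is empty while the left-hand one is $\sum_{a<A}\sum_{j\in\mathcal{N}}g_j^{a,A}(S^1)\,x_{pj}^a(\pi)$; thus equality holds for every $\pi$ (consistently with $\pi\preceq S^1$, since $S^1$ is the top element of the poset), which is exactly the conservation law~(\ref{eq:csa3}).

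Once Lemma~\ref{lma:declaws}(b) is in hand the argument is short; the only points requiring care are the equality characterizations — translating the vanishing of a block of the occupancy measures $x_{pj}^a(\pi)$ into a statement about which gears $\pi$ actually uses, which is precisely where the full-support assumption $p>0$ enters — and the bookkeeping of the index sets when $S$ is taken to be $S^1$ or $S^{K+1}$.
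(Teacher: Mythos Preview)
Your proof is correct and follows essentially the same route as the paper: both start from the decomposition identity of Lemma~\ref{lma:declaws}(b), use (PCLI1) (via the telescoping of Remark~\ref{rem:tdabutd2}(i)) to make all coefficients $g_j^{a,a'}(S)$ strictly positive, and invoke the full support of $p$ to translate the vanishing of blocks of occupancy measures into statements about which gears $\pi$ selects. Your write-up is in fact slightly more explicit than the paper's on the equality characterization in part~(b), but the underlying argument is identical.
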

\begin{proof}
(a.1) From Lemma \ref{lma:declaws} (b), we obtain, under condition (PCLI1),
\begin{equation}
\label{eq:a1ineq}
G_p(\pi)  + \sum_{a < a'} \sum_{j \in S_{a'}} g_{j}^{a, a'}(S) x_{pj}^a(\pi)  = 
G_p(S) + \sum_{a < a'} \sum_{j \in S_{a}} g_{j}^{a, a'}(S) x_{pj}^{a'}(\pi) \geqslant G_p(S),
\end{equation}
\textls[-15]{with equality iff (using the fact that the initial state distribution $p$ has full support) \mbox{$x_{pj}^{a'}(\pi) = 0$}} for $j \in S_{a}$ with $a < a'$, i.e., iff $\pi$ selects gears $a' \leqslant a$ in states $j \in S_{a}$, for $a = 0, \ldots, A-1$.

Parts (a.2) and (a.3) are direct consequences of (a.1).

Part (b) follows directly from (PCLI1).
\end{proof}

The next result shows how to reformulate the equivalent modified holding cost metric $\widehat{F}_p(\pi) \triangleq \sum_{a =0}^{A-1} \sum_{j \in \mathcal{N}} \widehat{h}_j^{a} x_{pj}^{a}(\pi)$ (see Section \ref{s:lpreflpp}) in terms of the output $\{(j_k, a_k), S^k, m_{j_k}^{*, a_k}\}_{k=1}^{K}$ of algorithm $\mathrm{DS}(\mathcal{F})$ in Algorithm \ref{alg:tdabutd} and expressions arising in the PCLs in Proposition \ref{pro:pcls}.

\begin{lemma}
\label{lma:objdual2}  Under condition (PCLI1), 
\[
\widehat{F}_p(\pi)  = 
m_{j_1}^{*,  a_1} \sum_{a < a'} \sum_{j \in S_{a'}^1} g_{j}^{a, a'}(S^1) x_{pj}^{a}(\pi)  + 
\sum_{k=2}^K (m_{j_k}^{*,  a_k} - m_{j_{k-1}}^{*, a_{k-1}}) \sum_{a < a'} \sum_{j \in S_{a'}^k} g_{j}^{a, a'}(S^k) x_{pj}^{a}(\pi).
\]
\end{lemma}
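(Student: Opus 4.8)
The plan is to feed the expansion of the modified holding costs supplied by Lemma~\ref{lma:objdual} into the bilinear expression $\widehat F_p(\pi)=\sum_{a=0}^{A-1}\sum_{j\in\mathcal{N}}\widehat h_j^{a}\,x_{pj}^{a}(\pi)$ and then interchange the order of summation so as to collect, for each $k$, the coefficient of the increment $m_{j_k}^{*,a_k}-m_{j_{k-1}}^{*,a_{k-1}}$. The combinatorial input I would isolate first is this: by Remark~\ref{rem:tdabutd}(iii) the pairs $(j_k,a_k)$ exhaust $\mathcal{N}\times(\mathcal{A}-\{0\})$ without repetition, so for every state $j$ and every gear $a\in\{0,\dots,A-1\}$ there is a unique step, denote it $l(j,a{+}1)$, at which the algorithm downshifts state $j$ from gear $a{+}1$ to gear $a$; since in each $\mathcal{F}$-policy $S^{n}$ state $j$ lies in exactly one block $S^{n}_{a^{n}(j)}$ and the only move the algorithm ever applies to $j$ lowers its gear by one, the gears of $j$ are downshifted in the order $A\to A-1\to\cdots\to 1\to 0$, whence $l(j,A)<l(j,A{-}1)<\cdots<l(j,1)$ and, crucially, $a^{n}(j)>a\iff n\le l(j,a{+}1)$.

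\textbf{Substitution and reindexing.} Next I would fix $(j,a)$ with $a<A$ and apply Lemma~\ref{lma:objdual} with $l=l(j,a{+}1)$ (so $j_{l}=j$, $a_{l}=a{+}1$, $a_{l}-1=a$), obtaining
\[
\widehat h_j^{a}=g_j^{a,A}(S^1)\,m_{j_1}^{*,a_1}+\sum_{n=2}^{l(j,a+1)} g_j^{a,a^{n}(j)}(S^{n})\bigl(m_{j_n}^{*,a_n}-m_{j_{n-1}}^{*,a_{n-1}}\bigr),
\]
the sum being vacuous when $l(j,a{+}1)=1$ (i.e. $a=A-1$, $j=j_1$). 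Multiplying by $x_{pj}^{a}(\pi)$, summing over all $(j,a)$ with $a<A$, and moving the sum over $n$ to the outside of the second term expresses $\widehat F_p(\pi)$ as $m_{j_1}^{*,a_1}\sum_{(j,a):a<A}g_j^{a,A}(S^1)x_{pj}^{a}(\pi)$ plus $\sum_{k=2}^{K}\bigl(m_{j_k}^{*,a_k}-m_{j_{k-1}}^{*,a_{k-1}}\bigr)\sum_{(j,a):\,a<A,\;l(j,a+1)\ge k}g_j^{a,a^{k}(j)}(S^{k})x_{pj}^{a}(\pi)$.

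\textbf{Identifying the coefficients.} It then remains to match these two coefficient sums with those in the statement. For the $m_{j_1}^{*,a_1}$ term this is immediate, since $S^1=(\emptyset,\dots,\emptyset,\mathcal{N})$ forces $S^1_{a'}=\mathcal{N}$ for $a'=A$ and $S^1_{a'}=\emptyset$ otherwise, so $\sum_{a<a'}\sum_{j\in S^1_{a'}}g_j^{a,a'}(S^1)x_{pj}^{a}(\pi)=\sum_{a<A}\sum_{j\in\mathcal{N}}g_j^{a,A}(S^1)x_{pj}^{a}(\pi)$. For each $k\ge 2$ I would use the equivalence $l(j,a{+}1)\ge k\iff a^{k}(j)>a$ from the first paragraph to regroup the $k$-th sum by the block $a'=a^{k}(j)$ occupied by $j$ under $S^{k}$; because $a'\le A$, the constraint $a<A$ is absorbed into $a<a'$, so the sum becomes $\sum_{a'}\sum_{j\in S^{k}_{a'}}\sum_{a=0}^{a'-1}g_j^{a,a'}(S^{k})x_{pj}^{a}(\pi)=\sum_{a<a'}\sum_{j\in S^{k}_{a'}}g_j^{a,a'}(S^{k})x_{pj}^{a}(\pi)$, which is exactly the coefficient appearing in the claimed identity. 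Assembling the two identifications gives the formula.

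\textbf{Main obstacle.} The delicate part is the bookkeeping of the middle step, namely keeping precise track of which state--action pairs $(j,a)$ contribute to the coefficient of the $k$-th index increment and establishing $l(j,a{+}1)\ge k\iff a^{k}(j)>a$ from the monotone ``staircase'' structure of the trajectories $n\mapsto a^{n}(j)$; once that is nailed down, the rest is a routine interchange of finite sums together with the explicit forms of $S^1$ and the $S^{k}$ and the expansion from Lemma~\ref{lma:objdual}.
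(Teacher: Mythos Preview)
Your proposal is correct and follows essentially the same route as the paper: substitute Lemma~\ref{lma:objdual} into $\widehat F_p(\pi)=\sum_{l=1}^{K}\widehat h_{j_l}^{a_l-1}x_{pj_l}^{a_l-1}(\pi)$, interchange the sums over $l$ and $k$, and then identify the inner sum with $\sum_{a<a'}\sum_{j\in S^k_{a'}}g_j^{a,a'}(S^k)x_{pj}^a(\pi)$. Your explicit notation $l(j,a{+}1)$ and the equivalence $l(j,a{+}1)\ge k\iff a^k(j)>a$ make the last identification more transparent than the paper's terse passage, but the argument is the same.
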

\begin{proof}
The result follows  from
Lemma \ref{lma:objdual}, which yields
\begin{align*}
\widehat{F}_p(\pi) & \triangleq \sum_{a =0}^{A-1} \sum_{j \in \mathcal{N}} \widehat{h}_j^{a} x_{pj}^{a}(\pi) = 
\sum_{l=1}^{K} \widehat{h}_{j_l}^{a_l-1} x_{p j_l}^{a_l-1}(\pi) \\
& = g_{j_1}^{a_1-1, A}(S^1) m_{j_1}^{*,  a_1} x_{p j_1}^{a_1-1}(\pi) \\
& \qquad + 
  \sum_{l=2}^{K} \bigg[g_{j_l}^{a_l-1, A}(S^1) m_{j_1}^{*,  a_1}  + \sum_{k=2}^{l} g_{j_l}^{a_l-1, a^k(j_l)}(S^k) (m_{j_k}^{*, a_k}-m_{j_{k-1}}^{*, a_{k-1}})\bigg] x_{p j_l}^{a_l-1}(\pi)  \\
& = m_{j_1}^{*,  a_1} \sum_{l=1}^{K} g_{j_l}^{a_l-1, A}(S^1) x_{p j_l}^{a_l-1}(\pi) + 
\sum_{k=2}^{K} (m_{j_k}^{*, a_k}-m_{j_{k-1}}^{*, a_{k-1}}) \sum_{l=k}^{K} g_{j_l}^{a_l-1, a^k(j_l)}(S^k) x_{p j_l}^{a_l-1}(\pi) \\
& = m_{j_1}^{*,  a_1} \sum_{a < a'} \sum_{j \in S_{a'}^1} g_{j}^{a, a'}(S^1) x_{pj}^{a}(\pi)  + 
\sum_{k=2}^K (m_{j_k}^{*,  a_k} - m_{j_{k-1}}^{*, a_{k-1}}) \sum_{a < a'} \sum_{j \in S_{a'}^k} g_{j}^{a, a'}(S^k) x_{pj}^{a}(\pi).
\end{align*}
\end{proof}

The following result draws on the previous one by showing how to reformulate the cost metric $\widehat{V}_p(\lambda, \pi)$ of the equivalent modified $\lambda$-price problem (see Lemma \ref{lma:equivPlhatPl}) in terms of the output of algorithm $\mathrm{DS}(\mathcal{F})$.
\begin{lemma}
\label{lma:objduallamb}  \textls[-15]{Under (PCLI1), $\widehat{V}_p(\lambda, \pi)$ can be reformulated into the following equivalent expressions:}
\begin{itemize}
\item[(a)] 
\begin{align*}
\widehat{V}_p(\lambda, \pi) & = \lambda G_p(S^1) + (m_{j_1}^{*,  a_1} - \lambda) \sum_{a < a'} \sum_{j \in S_{a'}^1} g_{j}^{a, a'}(S^1) x_{pj}^{a}(\pi) \\
& \qquad  + 
\sum_{k=2}^K (m_{j_k}^{*,  a_k} - m_{j_{k-1}}^{*, a_{k-1}}) \sum_{a < a'} \sum_{j \in S_{a'}^k} g_{j}^{a, a'}(S^k) x_{pj}^{a}(\pi);
\end{align*}
\item[(b)] 
\begin{align*}
\widehat{V}_p(\lambda, \pi) & = m_{j_1}^{*,  a_1} G_p(S^1)   + 
\sum_{k=2}^{l-1} (m_{j_k}^{*,  a_k} - m_{j_{k-1}}^{*, a_{k-1}}) \bigg(G_p(\pi)  + \sum_{a < a'} \sum_{j \in S_{a'}^k} g_{j}^{a, a'}(S^k) x_{pj}^{a}(\pi)\bigg) \\
& \qquad + (\lambda -  m_{j_{l-1}}^{*,  a_{l-1}}) \bigg(G_p(\pi)  + \sum_{a < a'} \sum_{j \in S_{a'}^l} g_{j}^{a, a'}(S^l) x_{pj}^{a}(\pi)\bigg) \\
& \qquad + (m_{j_{l}}^{*, a_{l}} -  \lambda)  \sum_{a < a'} \sum_{j \in S_{a'}^l} g_{j}^{a, a'}(S^l) x_{pj}^{a}(\pi) \\
& \qquad  + 
\sum_{k=l+1}^{K} (m_{j_k}^{*,  a_k} - m_{j_{k-1}}^{*, a_{k-1}}) \sum_{a < a'} \sum_{j \in S_{a'}^k} g_{j}^{a, a'}(S^k) x_{pj}^{a}(\pi);
\end{align*}
\item[(c)] 
\begin{align*}
\widehat{V}_p(\lambda, \pi) & = m_{j_1}^{*,  a_1} G_p(S^1)   + 
\sum_{k=2}^K (m_{j_k}^{*,  a_k} - m_{j_{k-1}}^{*, a_{k-1}}) \bigg(G_p(\pi)  + \sum_{a < a'} \sum_{j \in S_{a'}^k} g_{j}^{a, a'}(S^k) x_{pj}^{a}(\pi)\bigg) \\
& \qquad + ( \lambda -  m_{j_K, a_K}) G_p(\pi).
\end{align*}
\end{itemize}
\end{lemma}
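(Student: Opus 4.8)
The plan is to obtain all three identities from the single decomposition $\widehat{V}_p(\lambda,\pi) = \widehat{F}_p(\pi) + \lambda G_p(\pi)$ by substituting Lemma~\ref{lma:objdual2} for $\widehat{F}_p(\pi)$ and the $S^1$-conservation law of Proposition~\ref{pro:pcls}(a.3) for $\lambda G_p(\pi)$, and then rearranging purely algebraically. To control the bookkeeping I would abbreviate, for each $k$, the quantity appearing in the PCL of $S^k$ as $\Gamma_k(\pi) \triangleq \sum_{a<a'}\sum_{j\in S_{a'}^k} g_j^{a,a'}(S^k)\,x_{pj}^{a}(\pi)$. With this notation Lemma~\ref{lma:objdual2} reads $\widehat{F}_p(\pi) = m_{j_1}^{*,a_1}\Gamma_1(\pi) + \sum_{k=2}^K (m_{j_k}^{*,a_k} - m_{j_{k-1}}^{*,a_{k-1}})\Gamma_k(\pi)$, while Proposition~\ref{pro:pcls}(a.3), since $S^1 = (\emptyset,\ldots,\emptyset,\mathcal{N})$ forces $S_{a'}^1 = \emptyset$ for $a' < A$, gives the equality $G_p(\pi) + \Gamma_1(\pi) = G_p(S^1)$, i.e.\ $\Gamma_1(\pi) = G_p(S^1) - G_p(\pi)$.

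For part (a), I would plug in the expression for $\widehat{F}_p(\pi)$ above and write $\lambda G_p(\pi) = \lambda G_p(S^1) - \lambda\Gamma_1(\pi)$ using the $S^1$-conservation law; merging the two $\Gamma_1(\pi)$ terms into $(m_{j_1}^{*,a_1}-\lambda)\Gamma_1(\pi)$ reproduces (a) verbatim.

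For part (c), I would start from (a) and apply $\Gamma_1(\pi) = G_p(S^1) - G_p(\pi)$ once more to turn $\lambda G_p(S^1) + (m_{j_1}^{*,a_1}-\lambda)\Gamma_1(\pi)$ into $m_{j_1}^{*,a_1}G_p(S^1) - (m_{j_1}^{*,a_1}-\lambda)G_p(\pi)$, and then, for each $k=2,\ldots,K$, replace $\Gamma_k(\pi)$ by $\bigl(G_p(\pi)+\Gamma_k(\pi)\bigr) - G_p(\pi)$. The extracted $-G_p(\pi)$ pieces telescope, with total coefficient $-(m_{j_1}^{*,a_1}-\lambda) - \sum_{k=2}^{K}(m_{j_k}^{*,a_k}-m_{j_{k-1}}^{*,a_{k-1}}) = \lambda - m_{j_K}^{*,a_K}$, which is exactly (c). For part (b) (for any fixed $l$ with $2 \leqslant l \leqslant K$), I would run the same argument but perform the replacement $\Gamma_k(\pi) = \bigl(G_p(\pi)+\Gamma_k(\pi)\bigr) - G_p(\pi)$ only for $k=2,\ldots,l-1$, leaving the summands with $k\geqslant l$ as they stand; the accumulated $G_p(\pi)$ coefficient is then $-(m_{j_1}^{*,a_1}-\lambda) - \sum_{k=2}^{l-1}(m_{j_k}^{*,a_k}-m_{j_{k-1}}^{*,a_{k-1}}) = \lambda - m_{j_{l-1}}^{*,a_{l-1}}$. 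Finally I would split the $k=l$ summand as $(m_{j_l}^{*,a_l}-m_{j_{l-1}}^{*,a_{l-1}})\Gamma_l(\pi) = (\lambda - m_{j_{l-1}}^{*,a_{l-1}})\Gamma_l(\pi) + (m_{j_l}^{*,a_l}-\lambda)\Gamma_l(\pi)$ and combine its first term with the residual $(\lambda - m_{j_{l-1}}^{*,a_{l-1}})G_p(\pi)$ into $(\lambda - m_{j_{l-1}}^{*,a_{l-1}})\bigl(G_p(\pi)+\Gamma_l(\pi)\bigr)$, which yields (b).

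The only obstacle is purely clerical: keeping accurate track of the coefficient that accumulates on $G_p(\pi)$ as the $\Gamma_k(\pi)$ are split and the telescoping sums collapse, and (for (b)) making sure the split of the $k=l$ term is reassembled correctly. There is no conceptual difficulty beyond correctly invoking Lemma~\ref{lma:objdual2} and the fact that the PCL of $S^1$ holds with equality.
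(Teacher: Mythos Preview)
Your proposal is correct and follows essentially the same approach as the paper, which simply states that all three parts follow from the definition $\widehat{V}_p(\lambda,\pi)=\widehat{F}_p(\pi)+\lambda G_p(\pi)$ and Lemma~\ref{lma:objdual2} ``by suitably rearranging terms.'' Your write-up is in fact more explicit than the paper's: you correctly identify that the $S^1$-conservation law of Proposition~\ref{pro:pcls}(a.3), i.e.\ $G_p(\pi)+\Gamma_1(\pi)=G_p(S^1)$, is the ingredient needed to make the term $G_p(S^1)$ appear, and your telescoping bookkeeping for parts~(b) and~(c) is accurate.
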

\begin{proof}
All three parts follow from the definition $\widehat{V}_p(\lambda, \pi) \triangleq \widehat{F}_p(\pi) + \lambda G_p(\pi)$ and \mbox{Lemma \ref{lma:objdual2}} by suitably rearranging terms. 
\end{proof}

The next result draws on the above to reformulate the cost metrics $\widehat{V}_p(\lambda, S^l)$  in terms of the output of algorithm $\mathrm{DS}(\mathcal{F})$.

\begin{lemma}
\label{lma:objduallamb2}  Under condition (PCLI1),
\begin{itemize}
\item[(a)] 
$
\widehat{V}_p(\lambda, S^1) = \lambda G_p(S^1);
$
\item[(b)] For $2 \leqslant l \leqslant K$,
\[
\widehat{V}_p(\lambda, S^l) = m_{j_1}^{*,  a_1} G_p(S^1)   + 
\sum_{k=2}^{l-1} (m_{j_k}^{*,  a_k} - m_{j_{k-1}}^{*, a_{k-1}}) G_p(S^k) + (\lambda -  m_{j_{l-1}}^{*,  a_{l-1}}) G_p(S^l);
\]
\item[(c)] 
$
\widehat{V}_p(\lambda, S^{K+1}) = m_{j_1}^{*,  a_1} G_p(S^1)   + 
\sum_{k=2}^K (m_{j_k}^{*,  a_k} - m_{j_{k-1}}^{*, a_{k-1}}) G_p(S^k)  + ( \lambda -  m_{j_K}^{*, a_K}) G_p(S^{K+1}).
$
\end{itemize}
\end{lemma}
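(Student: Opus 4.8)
The plan is to specialize the general reformulations of $\widehat{V}_p(\lambda,\pi)$ in Lemma \ref{lma:objduallamb} to the stationary deterministic policies $\pi=S^l$ generated by $\mathrm{DS}(\mathcal{F})$, and to exploit that these form a $\preceq$-chain $S^{K+1}\prec\cdots\prec S^1$, so that $S^l\preceq S^k$ iff $k\le l$. The one fact doing all the work is that, for any two indices $k$ and $l$, the double sum $\sum_{a<a'}\sum_{j\in S_{a'}^k} g_j^{a,a'}(S^k)\,x_{pj}^{a}(S^l)$ collapses: by Proposition \ref{pro:pcls}(a.1) it equals $G_p(S^k)-G_p(S^l)$ whenever $k\le l$ (hence $G_p(S^l)$ plus that sum is $G_p(S^k)$), and by Proposition \ref{pro:pcls}(b) it equals $0$ whenever $k\ge l$. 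Only the ``if'' directions of those two characterizations are used, so the full-support hypothesis of Proposition \ref{pro:pcls} is not actually needed; equivalently, both collapses can be read off the identity in Lemma \ref{lma:declaws}(b), since $S^l\preceq S^k$ forces the relevant occupancies $x_{pj}^{a'}(S^l)$ to vanish. All assertions are under (PCLI1), inherited from the lemmas invoked.

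For part (a) I would put $\pi=S^1$ in Lemma \ref{lma:objduallamb}(a): since $S^k\preceq S^1$ for every $k$, every double sum there is $0$, leaving $\widehat{V}_p(\lambda,S^1)=\lambda G_p(S^1)$. (Alternatively, $\widehat{F}_p(S^1)=F_p(S^1)-F_p(S^1)=0$ by Lemma \ref{lma:relhhhat}(a), so $\widehat{V}_p(\lambda,S^1)=\widehat{F}_p(S^1)+\lambda G_p(S^1)=\lambda G_p(S^1)$.)

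For part (b), fix $l$ with $2\le l\le K$ and put $\pi=S^l$ in Lemma \ref{lma:objduallamb}(b) with the same index $l$. In the terms indexed by $k=2,\dots,l-1$ one has $k<l$, so $G_p(S^l)$ plus the corresponding sum equals $G_p(S^k)$; in the term weighted by $\lambda-m_{j_{l-1}}^{*,a_{l-1}}$ the index is $l$, so the bracket is $G_p(S^l)+0=G_p(S^l)$; the term weighted by $m_{j_l}^{*,a_l}-\lambda$ carries the bare sum at index $l$, which is $0$; and the terms indexed by $k=l+1,\dots,K$ carry bare sums at indices $k>l$, all $0$. Collecting the survivors yields exactly $m_{j_1}^{*,a_1}G_p(S^1)+\sum_{k=2}^{l-1}(m_{j_k}^{*,a_k}-m_{j_{k-1}}^{*,a_{k-1}})G_p(S^k)+(\lambda-m_{j_{l-1}}^{*,a_{l-1}})G_p(S^l)$. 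Part (c) is the same computation with $\pi=S^{K+1}$ in Lemma \ref{lma:objduallamb}(c): every index $k=2,\dots,K$ satisfies $k<K+1$, so each bracket $G_p(S^{K+1})$ plus the sum at index $k$ equals $G_p(S^k)$, and the displayed formula drops out (reading $m_{j_K,a_K}$ in Lemma \ref{lma:objduallamb}(c) as $m_{j_K}^{*,a_K}$).

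There is no real obstacle here beyond bookkeeping; the only things to watch are the orientation of the chain ($S^l\preceq S^k\iff k\le l$), so that the two collapse rules hit the correct blocks of terms, and the cosmetic notational slip $m_{j_K,a_K}$ versus $m_{j_K}^{*,a_K}$. If one prefers to bypass Lemma \ref{lma:objduallamb}, an equally short route is to combine the one-step recursions of Lemma \ref{lma:lambdapprec}(e) and (c) with Lemma \ref{lma:equivPlhatPl} to get $\widehat{V}_p(\lambda,S^k)=\widehat{V}_p(\lambda,S^{k-1})-(\lambda-m_{j_{k-1}}^{*,a_{k-1}})(G_p(S^{k-1})-G_p(S^k))$, start from part (a), and telescope; each step turns the running ``tail'' coefficient $\lambda-m_{j_{l-2}}^{*,a_{l-2}}$ on $G_p(S^{l-1})$ into the summand coefficient $m_{j_{l-1}}^{*,a_{l-1}}-m_{j_{l-2}}^{*,a_{l-2}}$, which is precisely the pattern asserted in (b) and (c).
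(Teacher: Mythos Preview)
Your proposal is correct and follows essentially the same approach as the paper: each part is obtained by specializing the corresponding part of Lemma~\ref{lma:objduallamb} to $\pi=S^l$ and simplifying via the PCLs in Proposition~\ref{pro:pcls}. Your write-up is simply a more explicit version of the paper's one-line proof, and your side remarks (that only the ``if'' directions are needed so full support of $p$ is not required, and the alternative telescoping route via Lemma~\ref{lma:lambdapprec}) are correct but go beyond what the paper records.
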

\begin{proof}
Each part follows from the corresponding part in Lemma \ref{lma:objduallamb}, using the PCLs in Proposition \ref{pro:pcls} to simplify the resulting expressions. 
\end{proof}

\section{Proof of the Verification Theorem}
\label{s:proofotvthe}
We are now ready to prove Theorem \ref{the:verthe}.

\begin{proof}[Proof of Theorem \ref{the:verthe}]
We will prove the result by showing the following: (i) policy $S^1$ is $\lambda$-optimal iff $\lambda \leqslant m_{j_1}^{*,  a_1}$;
(ii) for $2 \leqslant l \leqslant K$, policy $S^l$ is $\lambda$-optimal iff $m_{j_{l-1}}^{*,  a_{l-1}} \leqslant \lambda \leqslant m_{j_l}^{*,  a_l}$; and 
(iii) policy $S^{K+1}$ is $\lambda$-optimal iff $\lambda \geqslant m_{j_K}^{*,  a_K}$.
Note that (i, ii, iii) imply that the model is $\mathcal{F}$-indexable with DAI $\lambda_{j}^{*, a}$ being given by the MPI $m_{j}^{*, a}$.

We consider below that $p > 0$, i.e., the initial-state distribution $p$ has full support.

Start with (i).  If $\lambda \leqslant m_{j_1}^{*,  a_1}$, we have, for any policy $\pi$, 
\begin{align*}
\widehat{V}_p(\lambda, \pi) & = \lambda G_p(S^1) + (m_{j_1}^{*,  a_1} - \lambda) \sum_{a < a'} \sum_{j \in S_{a'}^1} g_{j}^{a, a'}(S^1) x_{pj}^{a}(\pi) \\
& \qquad  + 
\sum_{k=2}^K (m_{j_k}^{*,  a_k} - m_{j_{k-1}}^{*, a_{k-1}}) \sum_{a < a'} \sum_{j \in S_{a'}^k} g_{j}^{a, a'}(S^k) x_{pj}^{a}(\pi) \\
& \geqslant \lambda G_p(S^1) = \widehat{V}_p(\lambda, S^1),
\end{align*}
where we have used Lemmas \ref{lma:objduallamb} (a) and \ref{lma:objduallamb2} (a) and conditions (PCLI1, PCLI2).
Hence, policy $S^1$ is $\lambda$-optimal.

Conversely, suppose that policy $S^1$ is $\lambda$-optimal. Then, using Lemma \ref{lma:lambdapprec} (f), we obtain
\[
0 \leqslant V_p(\lambda, S^{2}) - V_p(\lambda, S^1) = (m_{j_{1}}^{*, a_{1}} - \lambda) 
  g_{j_{1}}^{a_{1}-1, a_{1}}(S^{2}) x_{p j_{1}}^{a_{1}}(S^{1}).
\]

Now, since $g_{j_{1}}^{a_{1}-1, a_{1}}(S^{2}) > 0$ (by (PCLI1)) and $x_{p j_{1}}^{a_{1}}(S^{1}) > 0$ (because $p$ has full support), it follows that $\lambda \leqslant m_{j_{1}}^{*, a_{1}}$.

Consider now (ii). If $m_{j_{l-1}}^{*,  a_{l-1}} \leqslant \lambda \leqslant m_{j_l}^{*,  a_l}$ for some $l$ with $2 \leqslant l \leqslant K$, we have, for any policy $\pi$, 
\begin{align*}
\widehat{V}_p(\lambda, \pi) & = m_{j_1}^{*,  a_1} G_p(S^1)   + 
\sum_{k=2}^{l-1} (m_{j_k}^{*,  a_k} - m_{j_{k-1}}^{*, a_{k-1}}) \bigg(G_p(\pi)  + \sum_{a < a'} \sum_{j \in S_{a'}^k} g_{j}^{a, a'}(S^k) x_{pj}^{a}(\pi)\bigg) \\
& \qquad + (\lambda -  m_{j_{l-1}}^{*,  a_{l-1}}) \bigg(G_p(\pi)  + \sum_{a < a'} \sum_{j \in S_{a'}^l} g_{j}^{a, a'}(S^l) x_{pj}^{a}(\pi)\bigg) \\
& \qquad + (m_{j_{l}}^{*, a_{l}} -  \lambda)  \sum_{a < a'} \sum_{j \in S_{a'}^l} g_{j}^{a, a'}(S^l) x_{pj}^{a}(\pi) \\
& \qquad  + 
\sum_{k=l+1}^{K} (m_{j_k}^{*,  a_k} - m_{j_{k-1}}^{*, a_{k-1}}) \sum_{a < a'} \sum_{j \in S_{a'}^k} g_{j}^{a, a'}(S^k) x_{pj}^{a}(\pi) \\
& \geqslant \widehat{V}_p(\lambda, S^l),
\end{align*}
where we have further used Lemmas \ref{lma:objduallamb}(b) and \ref{lma:objduallamb2}(b), Proposition \ref{pro:pcls}, and conditions (PCLI1, PCLI2).
Hence, policy $S^l$ is $\lambda$-optimal.

Conversely, suppose that policy $S^l$ is $\lambda$-optimal. Then, using Lemma \ref{lma:lambdapprec} (e, f), we obtain
\[
0 \leqslant V_p(\lambda, S^{l+1}) - V_p(\lambda, S^l) = (m_{j_{l}}^{*, a_{l}} - \lambda) 
  g_{j_{l}}^{a_{l}-1, a_{l}}(S^{l+1}) x_{p j_{l}}^{a_{l}}(S^{l})
\]
and
\[
0 \leqslant V_p(\lambda, S^{l-1}) - V_p(\lambda, S^l) = (\lambda - m_{j_{l-1}}^{*, a_{l-1}}) 
  g_{j_{l-1}}^{a_{l-1}-1, a_{l-1}}(S^l) x_{p j_{l-1}}^{a_{l-1}}(S^{l-1}).
\]

Now, since $g_{j_{l}}^{a_{l}-1, a_{l}}(S^{l+1}) > 0$, $g_{j_{l-1}}^{a_{l-1}-1, a_{l-1}}(S^l) > 0$, $x_{p j_{l}}^{a_{l}}(S^{l}) > 0$ and $x_{p j_{l-1}}^{a_{l-1}}(S^{l-1}) > 0$,  it follows that $m_{j_{l-1}}^{*,  a_{l-1}} \leqslant \lambda \leqslant m_{j_l}^{*,  a_l}$.

Finally, consider (iii). 
If $\lambda \geqslant m_{j_K}^{*,  a_K}$, we can write, for any policy $\pi$, 
\begin{align*}
\widehat{V}_p(\lambda, \pi) & = m_{j_1}^{*,  a_1} G_p(S^1)   + 
\sum_{k=2}^K (m_{j_k}^{*,  a_k} - m_{j_{k-1}}^{*, a_{k-1}}) \bigg(G_p(\pi)  + \sum_{a < a'} \sum_{j \in S_{a'}^k} g_{j}^{a, a'}(S^k) x_{pj}^{a}(\pi)\bigg) \\
& \qquad + ( \lambda -  m_{j_K, a_K}) G_p(\pi) \\
& \geqslant \widehat{V}_p(\lambda, S^{K+1}), 
\end{align*}
where we have further used Lemmas \ref{lma:objduallamb} (c) and \ref{lma:objduallamb2} (c), Proposition \ref{pro:pcls}, and conditions (PCLI1, PCLI2).
Hence, policy $S^{K+1}$ is $\lambda$-optimal.

Conversely, suppose that policy $S^{K+1}$ is $\lambda$-optimal. Then, using Lemma \ref{lma:lambdapprec} (f), we obtain
\[
0 \leqslant V_p(\lambda, S^{K}) - V_p(\lambda, S^{K+1}l) = (\lambda - m_{j_{K}}^{*, a_{K}}) 
  g_{j_{K}}^{a_{K}-1, a_{K}}(S^{K+1}) x_{p j_{K}}^{a_{K}}(S^{K}).
\]

Now, since $g_{j_{K}}^{a_{K}-1, a_{K}}(S^{K+1}) > 0$ and $x_{p j_{K}}^{a_{K}}(S^{K}) > 0$,  it follows that $\lambda \geqslant m_{j_{K}}^{*, a_{K}}$. This completes the proof.
\end{proof}

\section{Application to Multi-Armed Multi-Gear Bandit Problem: Bound and Index Policy}
\label{s:bpMAMGBP}
\subsection{The Multi-armed Multi-Gear Bandit Problem (MAMGBP)}
\label{s:MAMGBP}
Besides the intrinsic interest of the indexability property in Definition \ref{def:indxb} for solving optimally the multi-gear bandit model, we next discuss as further motivation for such a property its application to design a suboptimal heuristic policy for the intractable \emph{multi-armed multi-gear bandit problem} (MAMGBP) introduced by the author in~\cite{nmvaluet08a} (where it was called the \emph{multi-armed multi-mode bandit problem}).

The MAMGBP concerns the optimal dynamic allocation of a single shared resource to a finite collection of $L$  projects modeled as multi-gear bandits, subject to a \emph{peak resource constraint} stating that the total resource usage in each period cannot exceed a given amount $\bar{q}$.
Denote by $s_l(t)$ and $a_l(t)$ the state and the action at time $t$ for project $l = 1, \ldots, L$, which belong to the state and action spaces $\mathcal{N}_l = \{1, \ldots, N_l\}$ and $\mathcal{A}_l = \{0, \ldots, A_l\}$, respectively.
The parameters of project $l$ are denoted here by $h_{l}(j_l, a_l)$, $q_{l}(j_l, a_l)$, and $p_{l}^a(i_l, j_l)$.

The MAMGBP is a multi-dimensional MDP with \emph{joint state} $\mathbf{s}(t) = (s_l(t))_{l=1}^L$ belonging to the \emph{joint state space} $\boldsymbol{\mathcal{N}} \triangleq \prod_{l=1}^L \mathcal{N}_l$ and \emph{joint action} $\mathbf{a}(t) = (a_l(t))_{l=1}^L$. 

The \emph{joint holding cost} and \emph{joint resource consumption} are additive across projects, being $h(\mathbf{i}, \mathbf{a}) \triangleq \sum_{l=1}^L h_l(i_l, a_l)$ and $q(\mathbf{i}, \mathbf{a}) \triangleq \sum_{l=1}^L q_l(i_l, a_l)$ in joint state $\mathbf{i} = (i_l)_{l=1}^L$ under joint action $\mathbf{a} = (a_l)_{l=1}^L$.
The set of \emph{feasible actions}  in joint state $\mathbf{i}$, satisfying the aforementioned peak resource constraint, is 
\begin{equation}
\label{eq:prct}
\boldsymbol{\mathcal{A}}(\mathbf{i}) \triangleq \bigg\{\mathbf{a} \in \prod_{l=1}^L \mathcal{A}_l \colon q(\mathbf{i}, \mathbf{a}) \leqslant \bar{q}\bigg\}.
\end{equation}

To ensure that there 
always exists a feasible joint action,  we require that, for every joint state $\mathbf{i}$, 
\begin{equation}
\label{eq:atleastofja}
q(\mathbf{i}, \mathbf{0}) \leqslant \bar{q}.
\end{equation}

Individual project state transitions are conditionally independent given that the actions at every project have been selected, and hence the
\emph{joint transition probabilities} are multiplicative across projects, being given by 
$p_l^{\mathbf{a}}(\mathbf{i}, \mathbf{j}) \triangleq \prod_{l=1}^L p_{l}^{a_l}(i_l, j_l)$.

Let $\boldsymbol{\Pi}(\bar{q})$ be the class of history-dependent randomized policies for selecting a feasible joint action at each time period, where we make explicit its dependence on $\bar{q}$ and denote by $\Ex_{\mathbf{i}}^{\boldsymbol{\pi}}[\cdot]$ the expectation under policy $\boldsymbol{\pi} \in \boldsymbol{\Pi}(\bar{q})$ starting from the joint state $\mathbf{i}$.   The expected total discounted holding cost incurred under policy $\boldsymbol{\pi}$ starting from $\mathbf{i}$ is 
\[
F(\mathbf{i}, \boldsymbol{\pi}) \triangleq \Ex_{\mathbf{i}}^{\boldsymbol{\pi}}\bigg[\sum_{l=1}^L \sum_{t=0}^\infty h_l(s_l(t), a_l(t)) \beta^t\bigg],
\]
and hence the optimal holding cost is
\[
F^*(\mathbf{i}) \triangleq \inf \, \{F(\mathbf{i}, \boldsymbol{\pi})\colon \boldsymbol{\pi} \in \boldsymbol{\Pi}(\bar{q})\}.
\]

We can thus formulate the MAMGBP as follows:
\begin{equation}
\label{eq:MAMGBP}
(P) \quad \textup{find } \boldsymbol{\pi}^* \in \boldsymbol{\Pi}(\bar{q})\colon F(\mathbf{i}, \boldsymbol{\pi}^*) = 
F^*(\mathbf{i}), \quad \mathbf{i} \in \boldsymbol{\mathcal{N}}.
\end{equation}

We shall refer to a policy $\boldsymbol{\pi}^*$ solving the MAMGBP $(P)$ as a \emph{$P$-optimal policy}.

Again, standard results in MDP theory ensure the existence of a $P$-optimal policy $\boldsymbol{\pi}^*$ in the class $\boldsymbol{\Pi}^{\textup{SD}}$ of stationary deterministic policies, which is determined by the Bellman equations
\begin{equation}
\label{eq:bemamdbp}
F^*(\mathbf{i}) = \min_{\mathbf{a} \in \boldsymbol{\mathcal{A}}(\mathbf{i})} \, h(\mathbf{i}, \mathbf{a}) +  \beta \sum_{\mathbf{j} \in \boldsymbol{\mathcal{N}}} p_l^{\mathbf{a}}(\mathbf{i}, \mathbf{j}) F^*(\mathbf{j}), \enspace 
\mathbf{i} \in \boldsymbol{\mathcal{N}}.
\end{equation}

However, these equations are hindered by the \emph{curse of dimensionality}, as the size of the state space $\boldsymbol{\mathcal{N}}$ grows exponentially with the number $L$ of projects, which renders them computationally intractable in practice for all but small $L$.

\subsection{A Bound for the MAMGBP}
\label{s:bMAMGBP}
Ref.~\cite{nmvaluet08a} introduced a Lagrangian approach for obtaining a lower bound for the optimal value of the  MAMGBP, extending that of Whittle~\cite{whit88b} for the case of two-gear projects.
First, we construct a \emph{relaxation} of problem $(P)$ by (i) relaxing the class of admissible policies from $\boldsymbol{\Pi}(\bar{q})$ to $\boldsymbol{\Pi}(\infty)$, thus allowing violations to 
 the sample-path peak resource constraints
\[
q(\mathbf{s}(t), \mathbf{a}(t)) \leqslant \bar{q}, \enspace t = 0, 1, \ldots,
\]
and (ii) replacing the latter
by the following aggregate relaxed version in expectation:
\[
\Ex_{\mathbf{i}}^{\boldsymbol{\pi}}\bigg[\sum_{t=0}^\infty q(\mathbf{s}(t), \mathbf{a}(t)) \beta^t\bigg] \leqslant \frac{\bar{q}}{1-\beta}.
\]

This leads to the following \emph{relaxation} of problem $(P)$ in (\ref{eq:MAMGBP}):
\begin{equation}
\label{eq:relxpP}
\begin{split}
(\widehat{P}) \quad & \minim \,  \Ex_{\mathbf{i}}^{\boldsymbol{\pi}}\bigg[\sum_{t=0}^\infty h(\mathbf{s}(t), \mathbf{a}(t)) \beta^t\bigg] \\
& \st\colon  \boldsymbol{\pi} \in \boldsymbol{\Pi}(\infty) \\
&  \Ex_{\mathbf{i}}^{\boldsymbol{\pi}}\bigg[\sum_{t=0}^\infty q(\mathbf{s}(t), \mathbf{a}(t)) \beta^t\bigg] \leqslant \frac{\bar{q}}{1-\beta}.
\end{split}
\end{equation}

The \emph{relaxed problem} $(\widehat{P})$ is a \emph{constrained MDP} (see, e.g.,~\cite{altman99}), for which an optimal policy generally depends on the initial state $\mathbf{i}$.
Such problems are amenable to a Lagrangian approach. Introducing a non-negative multiplier 
$\lambda \geqslant 0$ attached to the constraint in (\ref{eq:relxpP}), we can \emph{dualize} the latter, i.e., bring it into the objective, obtaining the \emph{Lagrangian relaxation}
\begin{equation}
\label{eq:LRrelxpP}
\begin{split}
(\widehat{P}_{\lambda}) \quad & \minim_{\boldsymbol{\pi} \in \boldsymbol{\Pi}(\infty)} \,  \Ex_{\mathbf{i}}^{\boldsymbol{\pi}}\bigg[\sum_{t=0}^\infty h(\mathbf{s}(t), \mathbf{a}(t)) \beta^t\bigg] + \lambda \bigg(\Ex_{\mathbf{i}}^{\boldsymbol{\pi}}\bigg[\sum_{t=0}^\infty q(\mathbf{s}(t), \mathbf{a}(t)) \beta^t\bigg] - \frac{\bar{q}}{1-\beta} \bigg).
\end{split}
\end{equation} 

Note that, for any initial joint state $\mathbf{i}$ and multiplier $\lambda \geqslant 0$, the optimal cost $\widehat{V}^*(\mathbf{i}, \lambda)$ of $(\widehat{P}_\lambda)$ is a \emph{lower bound} for that of relaxed problem $(\widehat{P})$, which we denote by $\widehat{F}^*(\mathbf{i})$. In turn, the latter gives a lower bound for the optimal cost $F^*(\mathbf{i})$ of $(P)$. Thus, 
\begin{equation}
\label{eq:lowbnds}
\widehat{V}^*(\mathbf{i}, \lambda) \leqslant \widehat{F}^*(\mathbf{i}) \leqslant F^*(\mathbf{i}).
\end{equation}

In light of (\ref{eq:lowbnds}), we are interested in finding an optimal multiplier $\lambda^*(\mathbf{i})$ solving the \emph{dual problem}
\begin{equation}
\label{eq:dualP}
(D) \quad \maxim_{\lambda \geqslant 0} \,  \widehat{V}^*(\mathbf{i}, \lambda).
\end{equation} 

Since $\widehat{V}^*(\mathbf{i}, \lambda)$ is a \emph{concave function} of $\lambda$, being a minimum of linear functions of $\lambda$, a local maximum of problem $(D)$ will be a global maximum.
Furthermore, since the above problems can be formulated as finite \emph{linear optimization} (LO) problems, the \emph{strong duality} property of the latter ensures the existence of an optimal multiplier $\lambda^*(\mathbf{i}) \geqslant 0$ solving  problem $(D)$ which attains the upper bound $\widehat{F}^*(\mathbf{i})$, i.e., with $\widehat{V}^*(\mathbf{i}, \lambda^*(\mathbf{i})) = \widehat{F}^*(\mathbf{i})$. This corresponds to the satisfaction of the \emph{complementary slackness} property
\begin{equation}
\label{eq:complSl}
\lambda^*(\mathbf{i}) \bigg(\Ex_{\mathbf{i}}^{\boldsymbol{\pi}^*}\bigg[\sum_{t=0}^\infty q(\mathbf{s}(t), \mathbf{a}(t)) \beta^t\bigg] - \frac{\bar{q}}{1-\beta} \bigg) = 0,
\end{equation} 
where $\boldsymbol{\pi}^*$ is an optimal policy for problem $(\widehat{P}_{\lambda^*(\mathbf{i})})$.

Now, since individual project state transitions are conditionally independent given that a joint action has been selected, it suffices ---as noted by Whittle~\cite{whit88b} for the case of two-gear projects--- to consider in $(\widehat{P}_{\lambda})$ \emph{decoupled policies} $\boldsymbol{\pi} = (\pi_l)_{l=1}^L$, where $\pi_l \in \Pi_l$ and $\Pi_l$ is the class of admissible policies for operating  project $l$ \emph{as if it were in isolation}.  This allows us to reformulate problem $(\widehat{P}_{\lambda})$ as 
\begin{equation}
\label{eq:dLRrelxpP}
\begin{split}
(\widehat{P}_{\lambda}) \quad & \minim \,  \sum_{l=1}^L \Ex_{i_l}^{\pi_l}\bigg[ \sum_{t=0}^\infty \big(h_l(s_l(t), a_l(t)) + \lambda q_l(s_l(t), a_l(t))\big) \beta^t\bigg] - \lambda \frac{\bar{q}}{1-\beta} \\
& \st\colon  \pi_l \in \Pi_l, \enspace l = 1, \ldots, L.
\end{split}
\end{equation} 

We can thus \emph{decouple} problem $(\widehat{P}_{\lambda})$ into the \emph{individual project subproblems}
\begin{equation}
\label{eq:ipsdLRrelxpP}
(\widehat{P}_{l, \lambda}) \quad \minim_{\pi_l \in \Pi_l} \,  \Ex_{i_l}^{\pi_l}\bigg[ \sum_{t=0}^\infty \big(h_l(s_l(t), a_l(t)) + \lambda q_l(s_l(t), a_l(t))\big) \beta^t\bigg],
\end{equation} 
for $l = 1, \ldots, L$.
Denoting by $V_l^*(i_l, \lambda)$ the minimum cost objective of subproblem $(\widehat{P}_{l, \lambda})$, it follows that the optimal cost $\widehat{V}^*(\mathbf{i}, \lambda)$ of Lagrangian relaxation $(\widehat{P}_\lambda)$ is decoupled as 
\begin{equation}
\label{eq:hatVstarilam}
\widehat{V}^*(\mathbf{i}, \lambda) = \sum_{l=1}^L V_l^*(i_l, \lambda) - \lambda \frac{\bar{q}}{1-\beta},
\end{equation}
which allows us to reformulate dual problem $(D)$ in (\ref{eq:dualP}) as
\begin{equation}
\label{eq:dualP2}
(D) \quad \maxim_{\lambda \geqslant 0} \,  \sum_{l=1}^L V_l^*(i_l, \lambda) - \lambda \frac{\bar{q}}{1-\beta}
\end{equation}

Now, suppose that each project $l$ is indexable with DAI $\lambda_l^*(j_l, a_l)$, so 
 such indices characterize as in Definition \ref{def:indxb}  the optimal policies for  individual project subproblems  $(\widehat{P}_{l, \lambda})$, which facilitates the evaluation of optimal costs $V_l^*(i_l, \lambda)$ and hence the computational solution of dual problem $(D)$. 
For such a purpose, one can use the result that, if $\pi_l^*(\lambda)$ is an optimal policy for subproblem $(\widehat{P}_{l, \lambda})$, then $-\Ex_{i_l}^{\pi_l^*(\lambda)}\big[\sum_{t=0}^\infty q_l(s_l(t), a_l(t)) \beta^t\big]$ is a \emph{subgradient} of 
$V_l^*(i_l, \lambda)$, seen as a function of $\lambda$.

\subsection{A Downshift Index Policy for the MAMGBP}
\label{s:ipMAMGBP}
Assuming that individual projects are indexable, the author proposed in~\cite{nmvaluet08a} a suboptimal heuristic index policy for the above MAMGBP based on the projects' DAIs.
Here we present a different proposal of a heuristic index policy based on individual project DAIs, which is more easily implementable than that in~\cite{nmvaluet08a}. 

Suppose that at time $t$ the joint state is $\mathbf{j} = (j_l)_{l=1}^L$. 
Consider the project DAIs evaluated at such states, $\lambda_l^*(j_l, a_l)$, for project $l = 1, \ldots, L$.
The proposed index policy is described in Algorithm \ref{alg:indexPolMAMGBP}, which specifies how to obtain the 
joint action $\widehat{\mathbf{a}} = (\widehat{a}_l)_{l=1}^L$ prescribed in such a joint state.

In short, the algorithm starts by assigning the highest possible gear $A_l$ to each project $l$. If this is feasible, in that it does not violate the peak resource constraint, this would be the prescribed joint action.
Otherwise, the algorithm proceeds by downshifting one of the projects to the next lower gear. The chosen project is one that has minimum DAI at the current gear.
The algorithm proceeds until the peak resource constraint is satisfied and the DAIs at the projects with prescribed active actions, if any, are non-negative. In light of the above, we call the policy resulting from this algorithm the  \emph{downshift index policy}.

The intuition behind the design of such a policy is that projects should be operated in such a way that two conflicting goals are balanced: (1) higher gears are to be preferred to lower gears whenever possible; and (2) the resulting joint action must be feasible, satisfying the peak resource constraint. 
The proposed downshift index policy is designed to strike such a balance. If a joint action is not feasible so that a project must be downshifted to a lower gear, the chosen project is one where the loss in performance due to such a downshift, for which the project DAIs are used as a proxy measure, is minimal.

Note that the downshift index policy reduces to the Whittle index policy in the case of two-gear projects.

\vspace{12pt}
\begin{algorithm}[H]
\caption{\hl{Downshift} 
 index policy for the MAMGBP.}
\begin{minipage}{3in}
\textbf{Input:} $\mathbf{j} = (j_l)_{l=1}^L$  (current joint state) \\
\textbf{Output:} $\widehat{\mathbf{a}} = (\widehat{a}_l)_{l=1}^L$  (prescribed joint action)
\begin{tabbing}
\textit{Initialization:} 
$a_l := A_l$, \enspace $l = 1, \ldots, L$  \\
\textit{Loop:} \\
\textbf{while} \= \enspace $\sum_{l=1}^L q_l(j_l, a_l) > \bar{q}$ \enspace \textbf{or} \enspace $\min_{1 \leq l \leq L\colon a_l \geqslant 1} \, \lambda_l^*(j_l, a_l) \leqslant 0$ \enspace \textbf{do} \\
\> \textbf{pick}  
 $\hat{l} \in \argmin_{1 \leq l \leq L\colon a_l \geqslant 1} \, \lambda_l^*(j_l, a_l)$ \\
\>   $a_{\hat{l}} := a_{\hat{l}} - 1$ (downshift gear in project $\hat{l}$) \\
\textbf{end} \{ while \} \\
$\widehat{\mathbf{a}} := \mathbf{a} = (j_l)_{l=1}^L$
\end{tabbing}
\end{minipage}
\label{alg:indexPolMAMGBP}
\end{algorithm}

\section{Some Extensions}
\label{s:sext}
This section presents some extensions to the above framework.
\subsection{Extension to the Long-Run Average Cost Criterion}
\label{s:elracc}
The above results for the discounted cost criterion readily extend to the \emph{(long-run) average cost criterion} (see, e.g., (\cite{put94} Ch.\ 8)) under appropriate \emph{ergodicity} conditions.
Consider the average cost, including holding and resource usage costs (charged at price $\lambda$),  of running the project starting from state $i$ under a policy $\pi \in \Pi$, defined by
\[
\overline{V}_i(\lambda, \pi) \triangleq \limsup_{T \to \infty} \, \frac{1}{T} \Ex_i^{\pi}\bigg[\sum_{t=0}^{T-1} \big(h_{s(t)}^{a(t)} + \lambda q_{s(t)}^{a(t)}\big)\bigg].
\]

We further define the corresponding optimal cost
\[
\overline{V}_i^*(\lambda) \triangleq \inf_{\pi \in \Pi} \, \overline{V}_i(\lambda, \pi).
\]

We can thus formulate the project's \emph{average $\lambda$-price problem} as
\begin{equation}
\label{eq:aclambdapp}
(\overline{P}_\lambda) \quad \textup{find } \pi^*(\lambda) \in \Pi\colon \overline{V}_i(\lambda, \pi^*(\lambda)) = 
\overline{V}_i^*(\lambda), \quad i \in \mathcal{N}.
\end{equation}

We shall refer to a policy $\pi^*(\lambda)$ solving the average $\lambda$-price problem $(\overline{P}_\lambda)$ as a 
\emph{$\lambda$-optimal policy}.

We shall make the following assumption.
\begin{assumption}
\label{ass:avccrit} {The following conditions hold:
\begin{itemize}
\item[\textup{(i)}] The model is {weakly accessible}, so the state space $\mathcal{N}$ can be partitioned into two subsets $\mathcal{N}^{\textup{tr}}$ and  $\mathcal{N}^{\textup{acc}}$, such that (i.a) all states in $\mathcal{N}^{\textup{tr}}$ are transient under every stationary policy and (i.b) for every two states $i$ and $j$ in  $\mathcal{N}^{\textup{acc}}$, $j$ is {accessible from} $i$, so there exists a stationary policy $\pi$ and a positive integer $t$ such that $\mathbb{P}_i^\pi\{s(t) = j\} > 0$. 
\item[\textup{(ii)}] Every policy $S \in \mathcal{F}$ is {unichain}, i.e., it induces a single recurrent class plus  possible additional transient states.
\end{itemize}}
\end{assumption}

Now, by standard results in average-cost MDP theory (see \cite{bertsek12} (Sec.\ 5.2)), \mbox{Assumption \ref{ass:avccrit}} (i) ensures the existence of a 
$\lambda$-optimal policy $\pi^*(\lambda) \in \Pi^{\textup{SD}}$, with the optimal average cost $\overline{V}_i^*(\lambda)$ being independent of the initial state, i.e., $\overline{V}_i^*(\lambda) \equiv \overline{V}^*(\lambda)$.

Now, by using the \emph{Laurent series expansions} for finite-state and -action MDP models (see Corollaries 8.2.4 and 8.2.5 in~\cite{put94}), we have the following.
For any stationary deterministic policy $\pi \in \Pi^{\textup{SD}}$,
\[
\overline{x}_{ij}^a(\pi) \triangleq \lim_{T \to \infty} \, \frac{1}{T} \Ex_i^\pi\bigg[\sum_{t=0}^{T-1} 1_{\{a(t) = a\}}\bigg] = \lim_{\beta \nearrow 1} \, (1-\beta) x_{ij}^a(\pi),
\] 
\[
\overline{F}_{i}(\pi) \triangleq \lim_{T \to \infty} \, \frac{1}{T} \Ex_i^\pi\bigg[\sum_{t=0}^{T-1} h_{s(t)}^{a(t)}\bigg] = \lim_{\beta \nearrow 1} \, (1-\beta) F_{i}(\pi),
\] 
\[
\overline{G}_{i}(\pi) \triangleq \lim_{T \to \infty} \, \frac{1}{T} \Ex_i^\pi\bigg[\sum_{t=0}^{T-1} q_{s(t)}^{a(t)}\bigg] = \lim_{\beta \nearrow 1} \, (1-\beta) G_{i}(\pi).
\] 

Furthermore, for any $S = (S_0, \ldots, S_A) \in \mathcal{F}$, Assumption \ref{ass:avccrit}(ii) ensures that the above metrics do not depend on the initial state $i$, so we can write $\overline{x}_{j}^a(S)$, $\overline{F}(S)$, and $\overline{G}(S)$.
Furthermore, we have the Laurent series expansions
\[
F_{i}(S) = \frac{\overline{F}(S)}{1-\beta} + \varphi_{i}(S) + O(1-\beta), \enspace \textup{as}  \enspace \beta \nearrow 1
\]
and 
\[
G_{i}(S) = \frac{\overline{G}(S)}{1-\beta} + \gamma_{i}(S) + O(1-\beta), \enspace \textup{as}  \enspace \beta \nearrow 1,
\]
where the \emph{bias} terms $\varphi_{i}(S)$ and $\gamma_{i}(S)$ are determined, up to an additive constant, by the evaluation equations
\[
\overline{F}(S) + \varphi_{i}(S) = h_i^a + \sum_{j \in \mathcal{N}} p_{ij}^a \varphi_{j}(S), \enspace i \in S_a, a \in \mathcal{A}
\]
and
\[
\overline{G}(S) + \gamma_{i}(S) = q_i^a + \sum_{j \in \mathcal{N}} p_{ij}^a \gamma_{j}(S), \enspace i \in S_a, a \in \mathcal{A}.
\]

From the above,  (\ref{eq:fiaapS}), and (\ref{eq:giaapS}) we can define the average \emph{marginal (holding) cost metric}
\begin{equation}
\label{eq:afiaapS}
\bar{f}_i^{a, a'}(S) \triangleq  h_i^{a} - h_i^{a'} + 
\sum_{j \in \mathcal{N}} p_{ij}^{a} \varphi_j(S) - \sum_{j \in \mathcal{N}} p_{ij}^{a'} \varphi_j(S) = \lim_{\beta \nearrow 1} f_i^{a, a'}(S)
\end{equation}
and the the average \emph{marginal resource (usage) metric}
\begin{equation}
\label{eq:agiaapS}
\bar{g}_i^{a, a'}(S) \triangleq q_i^{a'} - q_i^{a} + 
\sum_{j \in \mathcal{N}} p_{ij}^{a'} \gamma_j(S) - \sum_{j \in \mathcal{N}} p_{ij}^a \gamma_j(S) = \lim_{\beta \nearrow 1} g_i^{a, a'}(S).
\end{equation}

If $\bar{g}_i^{a, a'}(S) > 0$, we further define the \emph{average MP metric}
\begin{equation}
\label{eq:alambdiaapS}
\overline{m}_i^{a, a'}(S) \triangleq \frac{\bar{f}_i^{a, a'}(S)}{\bar{g}_i^{a, a'}(S)} = \lim_{\beta \nearrow 1} m_i^{a, a'}(S).
\end{equation}

We thus have the following verification theorem, which is the average criterion counterpart to Theorem \ref{the:verthe} for the discounted criterion. Note that the following theorem refers to the corresponding concepts for the average criterion of $\mathcal{F}$-indexability, PCL$(\mathcal{F})$-indexability, and downshifting algorithm $\overline{\mathrm{DS}}(\mathcal{F})$, which is as algorithm $\mathrm{DS}(\mathcal{F})$ but using the average marginal metrics instead of the discounted ones.

\begin{theorem}
\label{the:averthe}
If the average cost model is PCL$(\mathcal{F})$-indexable, then it is  $\mathcal{F}$-indexable, with DAI  $\bar{\lambda}_j^{*, a}$ given by the MPI $\overline{m}_j^{*, a}$.
\end{theorem}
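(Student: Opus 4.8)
\emph{Approach.} The plan is to obtain Theorem~\ref{the:averthe} from the discounted result Theorem~\ref{the:verthe} by a \emph{vanishing-discount} argument, using the Laurent-series identities already recorded: for fixed $S\in\mathcal F$ and $a\neq a'$, $\bar f_i^{a,a'}(S)=\lim_{\beta\nearrow 1}f_i^{a,a'}(S)$, $\bar g_i^{a,a'}(S)=\lim_{\beta\nearrow 1}g_i^{a,a'}(S)$, $\overline m_i^{a,a'}(S)=\lim_{\beta\nearrow 1}m_i^{a,a'}(S)$ (the divergent $O(1/(1-\beta))$ gains cancel in a one-gear change under Assumption~\ref{ass:avccrit}(ii)), together with $(1-\beta)x_{pj}^a(\pi)\to\overline x_{pj}^a(\pi)$, $(1-\beta)F_p(\pi)\to\overline F_p(\pi)$, $(1-\beta)G_p(\pi)\to\overline G_p(\pi)$ and $(1-\beta)$ times the optimal discounted $\lambda$-price cost tending to $\overline V^*(\lambda)$, with $\lambda$ held fixed. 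Write $m_j^{*,a}(\beta)$ for the MP index produced by $\mathrm{DS}(\mathcal F)$ for the $\beta$-discounted model.

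\emph{Reduction and sufficiency.} First I would treat the case where (PCLI2) holds with strict inequalities for the average model. Since $\bar g_j^{a-1,a}(S)>0$ and $g_j^{a-1,a}(S)\to\bar g_j^{a-1,a}(S)$, condition (PCLI1) holds for the $\beta$-discounted model once $\beta$ is close enough to $1$; and since the recursion~(\ref{eq:recindcpt}) is an algebraic identity in the metrics and $\argmin$ over a finite set is upper semicontinuous, a step-by-step induction over the $K$ steps shows that, for $\beta$ near $1$, some run of $\mathrm{DS}(\mathcal F)$ reproduces the combinatorics $(j_k,a_k)$ and $S^{K+1}\prec\cdots\prec S^1$ of $\overline{\mathrm{DS}}(\mathcal F)$, with $m_{j_k}^{*,a_k}(\beta)\to\overline m_{j_k}^{*,a_k}$; strictness then gives (PCLI2) for such $\beta$. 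Theorem~\ref{the:verthe} thus applies: $S^l$ is $\beta$-$\lambda$-optimal iff $m_{j_{l-1}}^{*,a_{l-1}}(\beta)\leqslant\lambda\leqslant m_{j_l}^{*,a_l}(\beta)$. For $\lambda$ in the interior of $[\overline m_{j_{l-1}}^{*,a_{l-1}},\overline m_{j_l}^{*,a_l}]$ this holds for all $\beta$ near $1$, so multiplying the equality ``cost of $S^l$ equals optimal cost'' by $(1-\beta)$ and letting $\beta\nearrow 1$ yields $\overline V_p(\lambda,S^l)=\overline V^*(\lambda)$; the endpoints follow by continuity of $\lambda\mapsto\overline V^*(\lambda)$ and of $\lambda\mapsto\overline V_p(\lambda,S^l)=\overline F(S^l)+\lambda\overline G(S^l)$. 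The general (non-strict) case follows by a perturbation of the holding or resource parameters making (PCLI2) strict, together with continuity of all metrics and of $\overline V^*(\lambda)$ in the perturbation.

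\emph{Alternative self-contained route.} One may instead replay Sections~\ref{s:lpreflpp}--\ref{s:proofotvthe} verbatim with $\mathbf F(S),\mathbf G(S),x_{pj}^a(\pi)$ replaced by the bias vectors $\boldsymbol\varphi(S),\boldsymbol\gamma(S)$ and by $\overline x_{pj}^a(\pi)$, and with $V_p(\lambda,\cdot)$ replaced by $\overline V_p(\lambda,\cdot)=\overline F(\cdot)+\lambda\overline G(\cdot)$; the role of the invertibility of $\mathbf I-\beta\mathbf P^a$ is now played by the unique solvability, up to an additive constant, of the average-cost evaluation equations — guaranteed for $S\in\mathcal F$ by Assumption~\ref{ass:avccrit}(ii) — which still pins down the \emph{marginal} metrics $\bar f_j^{a,a'}(S),\bar g_j^{a,a'}(S)$ uniquely. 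The average LO reformulation (with occupancy variables $\overline x_j^a$) then supplies the average analogues of Lemmas~\ref{lma:declaws}--\ref{lma:lambdapprec} and of the PCLs in Proposition~\ref{pro:pcls}, from which the argument of Section~\ref{s:proofotvthe} carries over.

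\emph{Main obstacle.} The delicate point is the converse (necessity) half. In Theorem~\ref{the:verthe} it rests on Lemma~\ref{lma:lambdapprec}(e,f), whose derivation uses $x_{pj_l}^{a_l}(S^l)>0$; the average analogue $\overline x_{pj_l}^{a_l}(S^l)$ vanishes exactly when $j_l$ is transient under $S^l$, in which case downshifting at $j_l$ changes neither $\overline F$ nor $\overline G$, so $\overline V_p(\cdot,S^l)\equiv\overline V_p(\cdot,S^{l+1})$ and a priori the $\lambda$-optimality interval of $S^l$ could extend beyond $[\overline m_{j_{l-1}}^{*,a_{l-1}},\overline m_{j_l}^{*,a_l}]$. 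I would close this gap using concavity of $\overline V^*(\lambda)$ together with the monotonicity $\overline G(S^{l+1})\leqslant\overline G(S^l)$ (the average form of Proposition~\ref{pro:pcli1eq}), which forces the affine maps $\lambda\mapsto\overline F(S^l)+\lambda\overline G(S^l)$ and $\lambda\mapsto\overline F(S^{l+1})+\lambda\overline G(S^{l+1})$ to cross only at $\lambda=\overline m_{j_l}^{*,a_l}$, and invoking Assumption~\ref{ass:avccrit}(i) to prevent such degeneracies from cascading across a chain of MPI ties in a way that would destroy indexability; equivalently, the strict-then-perturb route above sidesteps the issue entirely.
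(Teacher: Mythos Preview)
The paper does not actually prove Theorem~\ref{the:averthe}. Section~\ref{s:elracc} sets up the average-cost metrics, records the Laurent-series limits (\ref{eq:afiaapS})--(\ref{eq:alambdiaapS}), and then simply states the theorem, relying on the opening remark that the discounted results ``readily extend'' under Assumption~\ref{ass:avccrit}. There is no argument given beyond that.

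Your proposal therefore goes well beyond what the paper offers, and both of your routes are the natural ones. The ``replay'' route is precisely what the paper tacitly invokes; the vanishing-discount route is a genuine alternative that exploits the Laurent limits the paper itself records. Your identification of the converse obstacle---that $\overline{x}_{p j_l}^{a_l}(S^l)$ can vanish when $j_l$ is transient under $S^l$, so the average analogue of Lemma~\ref{lma:lambdapprec}(e,f) loses its bite---is a real technical point that the paper does not address at all. Your proposed fixes (concavity of $\overline V^*$ plus monotonicity of $\overline G(S^l)$, or strict-then-perturb) are plausible, though neither is fully worked out: in the perturbation route you would need to check that a small perturbation preserves PCL$(\mathcal F)$-indexability, which is not automatic because (PCLI2) depends on the tie-breaking choices made by $\overline{\mathrm{DS}}(\mathcal F)$; and in the vanishing-discount route the step-by-step induction matching the combinatorics of the discounted and average algorithms requires more care when argmins are not unique (strict inequalities in (PCLI2) do not by themselves force unique argmins at each step). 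Still, these are refinements of a sound plan, and the paper supplies nothing comparable.
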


\subsection{Models with Uncontrollable States}
\label{s:mwuncst}
In the above framework, we have assumed that the DAI 
$\lambda_j^{*, a}$ is defined for all project states $j \in \mathcal{N}$. Yet, in some models, this need not be the case, in particular in those having \emph{uncontrollable states}. 
We call a project state $i$ \emph{uncontrollable} if only one action is available at $i$, or, equivalently, if all actions $a$ give the same transition probabilities, so 
$p_{ij}^a = p_{ij}^0$ for all $a$.
This concept was considered by the author in the corresponding framework for two-gear projects developed in~\cite{nmmp02}.
If there are uncontrollable states, we decompose the state space as $\mathcal{N} = \mathcal{N}^{\textup{cont}} \cup \mathcal{N}^{\textup{unc}}$, where $\mathcal{N}^{\textup{cont}}$ is the controllable state space and $\mathcal{N}^{\textup{unc}}$ is the uncontrollable state space.

In such a case, the above framework carries over by defining the concept of indexability and DAI by focusing on the controllable state space $\mathcal{N}^{\textup{cont}}$, so the DAI $\lambda_j^{*, a}$ will only be defined for states $j \in \mathcal{N}^{\textup{cont}}$. 
The required adaptions are straightforward. For example, the policy notation $S = (S_0, \ldots, S_A)$ used above can now be interpreted as meaning that, under such a policy, action $a$ is taken in controllable states $j \in S_a$ for $a = 0, \ldots, A$, as $S_0$, \ldots, $S_A$ is now a partition of $\mathcal{N}^{\textup{cont}}$. 

\subsection{Models with a Countably Infinite State Space}
\label{s:mcisspa}
The extension of the above framework to models with a countably infinite state space raises issues mainly in the definition of the downshift adaptive-greedy algorithm. Thus, the algorithm would not terminate, and it might not traverse the entire space of $(j, a)$ for which the index is defined. 
Furthermore, it might possibly entail choosing among infinitely many state--action pairs $(j, a)$ at each step.

Yet, in some countably infinite state space models such issues are easily addressed.
Consider, e.g., a model that might arise in queueing theory where the state is the number of customers in the system so the state space is the set of non-negative integers, $\mathcal{N} \triangleq \{0, 1, 2, \ldots\}$.
Imagine that the actions or gears $a$ correspond to server speeds, so higher gears give faster service rates.
The holding costs are used to model penalties (possibly nonlinear) for congestion.

In such a setting, it is natural to conjecture that optimal policies should be \emph{multi-threshold policies}. Any such policy is characterized by thresholds $z^1 \leqslant z^2 \leqslant \cdots \leqslant z^A$, with the interpretation that gear $0$ is used in states $1 \leqslant j \leqslant z^1$, gear $a$ is used in states $z^{a} < j \leqslant z^{a+1}$ for $a = 1, \ldots, A-1$, and gear $A$ is used in states $j > z^A$. Note that the optimality of such policies has been established in some queueing models, see, e.g.,~\cite{crabill72,sabeti73,ataShneorson06,mayorga06}.

The present framework would be applied to such a setting as follows. Rather than considering directly such multi-threshold policies and trying to establish their optimality, one would postulate a corresponding family of policies $\mathcal{F}$.
Note that in such a model state $0$ would be uncontrollable, as there is no meaningful choice of action when the queue is empty.
Thus, excluding state $0$ from consideration, the postulated family $\mathcal{F}$ would consist of partitions 
$S = (S_0, \ldots, S_A)$ of the controllable state space with the following property: if gear $a$ is selected in a state $j$ (i.e., $j \in S_a$), then at any lower state $j' < j$ a gear $a' \leqslant a$ must \mbox{be selected.}

It is easy to see that, in this setting, the natural extension of the downshift adaptive-greedy algorithm would indeed traverse the entire space of state--action pairs $(j, a)$ for which the DAI is defined, which would provide an alternative approach to address such problems to that previously considered  in the aforementioned literature. 

\section{Discussion}
\label{s:disc}
This paper has introduced novel sufficient conditions for the indexability of multi-gear bandits modeling a dynamic and stochastic project consuming a single resource, along with an efficient index-computing algorithm. 
This can be used to efficiently solve general MDP models that satisfy such conditions, and the index has further been used to design a heuristic index policy for the more complex multi-armed multi-gear bandit problem. 
This work opens a number of further avenues for developing such an approach, including the following: developing an efficient implementation of and testing the algorithm; 
deploying the new PCL-indexability conditions in a variety of relevant models arising in applications; extending the approach to models with a countable state space; and extending the approach to models with a continuous state space. 



\vspace{6pt} 




\section*{Funding}
{This research has been funded in part by the Spanish State Research Agency (\emph{Agencia Estatal de Investigaci\'on}, AEI) under grant PID2019-109196GB-I00 / AEI / 10.13039/501100011033 and by the \emph{Comunidad de Madrid} in the setting of the multi-year agreement with Carlos III University of Madrid within the line of activity ``\emph{Excelencia para el Profesorado Universitario}'', in the framework of the V Regional Plan of Scientific Research and Technological Innovation 2016--2020.}


\end{document}